\numberwithin{equation}{section}
\def\today{\number\day\space\ifcase\month\or   January\or February\or
   March\or April\or May\or June\or   July\or August\or September\or
   October\or November\or December\fi\   \number\year}
\theoremstyle{definition}
\newtheorem{thm}{Theorem}[section]
\newtheorem{lem}[thm]{Lemma}
\newtheorem{prp}[thm]{Proposition}
\newtheorem{dfn}[thm]{Definition}
\newtheorem{cor}[thm]{Corollary}
\newtheorem{ntn}[thm]{Notation}
\newtheorem{exa}[thm]{Example}
\newtheorem{qst}[thm]{Question}
\newcommand{\beq}{\begin{equation}}
\newcommand{\eeq}{\end{equation}}
\newcommand{\beqr}{\begin{eqnarray*}}
\newcommand{\eeqr}{\end{eqnarray*}}
\newcommand{\bal}{\begin{align*}}
\newcommand{\eal}{\end{align*}}
\newcommand{\bei}{\begin{itemize}}
\newcommand{\eei}{\end{itemize}}
\newcommand{\af}{\alpha}
\newcommand{\bt}{\beta}
\newcommand{\ep}{\varepsilon}
\newcommand{\ch}{\chi}
\newcommand{\io}{\iota}
\newcommand{\ld}{\lambda}
\newcommand{\sm}{\sigma}
\newcommand{\kp}{\kappa}
\newcommand{\ph}{\varphi}
\newcommand{\ps}{\psi}
\newcommand{\rh}{\rho}
\newcommand{\om}{\omega}
\newcommand{\Ld}{\Lambda}
\newcommand{\Z}{{\mathbb{Z}}}
\newcommand{\R}{{\mathbb{R}}}
\newcommand{\C}{{\mathbb{C}}}
\newcommand{\N}{{\mathbb{Z}}_{> 0}}
\newcommand{\Nz}{{\mathbb{Z}}_{\geq 0}}
\newcommand{\CC}{{\mathcal{C}}}
\newcommand{\CP}{{\mathcal{P}}}
\newcommand{\RR}{{\operatorname{RR}}}
\newcommand{\id}{{\operatorname{id}}}
\newcommand{\ev}{{\operatorname{ev}}}
\newcommand{\dist}{{\operatorname{dist}}}
\newcommand{\spec}{{\operatorname{sp}}}
\newcommand{\Prim}{{\operatorname{Prim}}}
\newcommand{\supp}{{\operatorname{supp}}}
\newcommand{\card}{{\operatorname{card}}}
\newcommand{\Aut}{{\operatorname{Aut}}}
\newcommand{\Ker}{{\operatorname{Ker}}}
\newcommand{\dirlim}{\varinjlim}
\newcommand{\andeqn}{\,\,\,\,\,\, {\mbox{and}} \,\,\,\,\,\,}
\newcommand{\tfae}{the following are equivalent}
\newcommand{\ifo}{if and only if}
\newcommand{\uca}{unital C*-algebra}
\newcommand{\hm}{homomorphism}
\newcommand{\hsa}{hereditary subalgebra}
\newcommand{\pj}{projection}
\newcommand{\nzp}{nonzero projection}
\newcommand{\mvnt}{Murray-von Neumann equivalent}
\newcommand{\ct}{continuous}
\newcommand{\cfn}{continuous function}
\newcommand{\chs}{compact Hausdorff space}
\newcommand{\hme}{homeomorphism}
\newcommand{\ov}{\overline}
\newcommand{\I}{\infty}
\newcommand{\E}{\varnothing}
\newcommand{\OT}{{\mathcal{O}}_2}
\newcommand{\OTT}{{\mathcal{O}}_2 \otimes}
\newcommand{\Lem}[1]{Lemma~\ref{#1}}
\newcommand{\Zq}[1]{\Z_{#1}}
\newcommand{\Zqt}{\Zq{2}}
\title[Weak ideal property]{The weak ideal property and
 topological dimension zero}
\author{Cornel Pasnicu and N.~Christopher Phillips}
\date{6~April 2016}
\address{Department of Mathematics,
      The University of Texas at San Antonio,
      San Antonio TX 78249, USA.}
\email{Cornel.Pasnicu@utsa.edu}
\address{Department of Mathematics, University  of Oregon,
       Eugene OR 97403-1222, USA.}
\subjclass[2010]{Primary 46L05.}
\keywords{Ideal property,
weak ideal property,
topological dimension zero,
$C_0 (X)$-algebra,
purely infinite C*-algebra}
\thanks{The work of the second author was supported by the
  US National Science Foundation under
  Grants DMS-1101742 and DMS-1501144.
  Some of the work was done during visits by both authors to
  the Institut Mittag-Leffler,
  and they are grateful to the Institut Mittag-Leffler
  for its hospitality and support.}
\begin{document}

\begin{abstract}
Following up on previous work,
we prove a number of results for C*-algebras
with the weak ideal property
or topological dimension zero,
and some results for C*-algebras with related properties.
Some of the more important results include:
\begin{itemize}
\item
The weak ideal property
implies topological dimension zero.
\item
For a separable C*-algebra~$A$,
topological dimension zero is equivalent to
${\operatorname{RR}} ({\mathcal{O}}_2 \otimes A) = 0$,
to $D \otimes A$ having the ideal property
for some (or any) Kirchberg algebra~$D$,
and to $A$ being residually hereditarily in
the class of all C*-algebras $B$ such that
${\mathcal{O}}_{\infty} \otimes B$
contains a nonzero projection.
\item
Extending the known result for ${\mathbb{Z}}_2$,
the classes of C*-algebras
with residual (SP),
which are residually hereditarily (properly) infinite,
or which are purely infinite and have the ideal property,
are closed under crossed products by arbitrary actions
of abelian $2$-groups.
\item
If $A$ and $B$ are separable,
one of them is exact,
$A$ has the ideal property,
and $B$ has the weak ideal property,
then $A \otimes_{\mathrm{min}} B$ has the weak ideal property.
\item
If $X$ is a totally disconnected locally compact Hausdorff space
and $A$ is a $C_0 (X)$-algebra
all of whose fibers have one of the weak ideal property,
topological dimension zero,
residual (SP),
or the combination of pure infiniteness and the ideal property,
then $A$ also has the corresponding property
(for topological dimension zero, provided $A$ is separable).
\item
Topological dimension zero,
the weak ideal property,
and the ideal property
are all equivalent
for a substantial class of separable C*-algebras including
all separable locally AH~algebras.
\item
The weak ideal property does not imply the ideal property
for separable $Z$-stable C*-algebras.
\end{itemize}
We give other related results,
as well as counterexamples to several other statements
one might hope for.
\end{abstract}

\maketitle

\indent
The weak ideal property
(recalled in Definition~\ref{D_5Y27_WIP} below)
was introduced in~\cite{PsnPh2};
it is the property for which there are good permanence results
(see Section~8 of~\cite{PsnPh2})
which seems to be closest to the ideal property.
(The ideal property fails to pass to extensions,
by Theorem~5.1 of~\cite{Psn1},
to corners, by Example~2.8 of~\cite{PsnPh1},
and to fixed point algebras under actions of~$\Zqt$,
by Example~2.7 of~\cite{PsnPh1}.
The weak ideal property does all of these.)
Topological dimension zero
was introduced in~\cite{BP09};
it is a non-Hausdorff version of total disconnectedness
of the primitive ideal space of a C*-algebra.
These two properties are related,
although not identical,
and the purpose of this paper is to study them
and their connections further.
Some of our results also involve the ideal property,
real rank zero,
and several forms of pure infiniteness.

For simple C*-algebras,
the appropriate regularity properties
(real rank zero, Property~(SP),
$Z$-stability, strict comparison,
pure infiniteness, etc.)\ %
are fairly well understood.
For nonsimple C*-algebras,
there are more such properties,
they are less well understood,
and it is not clear which of them are the ``right'' ones
to consider.
This paper is a contribution towards a better understanding
of some of these properties.

Even though it is not yet clear what the right
regularity properties in the nonsimple case are,
the properties we consider
(topological dimension zero, the weak ideal property,
and the ideal property) have at least proved to be valuable.
For example,
if $X$ is the primitive ideal space
of a separable C*-algebra, then $X$ has topological dimension zero
if and only if $X$ is the primitive ideal space of an AF~algebra.
(See Section~3 and the theorem in Section~5 of~\cite{BE}.)
If $A$ is a separable purely infinite C*-algebra, then $A$ has
real rank zero if and only if $A$ has topological dimension zero and is
$K_{0}$-liftable (Theorem~4.2 of~\cite{PR}).

Turning to the ideal property
(every ideal is generated, as an ideal, by its projections),
we consider AH algebras
(in the sense of~\cite{Ps2}:
the spaces used are connected finite complexes)
with the ideal property and with slow dimension growth.
Such algebras have stable rank one (Theorem 4.1 of~\cite{Ps2}),
and can be classified up to shape equivalence
by a K-theoretic invariant
(Theorem 2.15 of~\cite{Ps2}).
If $A$ is such an algebra and $K_{*} (A)$ is torsion free,
then $A$ is an AT~algebra, that is, a direct limit of finite
direct sums of matrix algebras over $C (S^{1})$
(Theorem~3.6 of~\cite{GJLP};
this paper uses a less restrictive definition of AH~algebra).
The stable rank one and AT~algebra results fail without
the ideal property.
(Counterexamples are easy, but too long for the introduction;
we present them at the beginning of Section~\ref{Sec_WIPIP}.)
Also, a
separable purely infinite C*-algebra has the ideal property if and only
if it has topological dimension zero (Proposition 2.11 of~\cite{PR}.)

The weak ideal property is much more recent.
As noted above,
it has better permanence properties than the ideal property.
Moreover,
under the hypotheses of the theorems above for the ideal property,
the weak ideal property actually implies the ideal property.
(See Theorem~\ref{T_5Z26_ClassForEq} and Theorem~\ref{T_5Y28_TDz}.)

We next describe our results.

We prove in Section~\ref{Sec_TDZ} that the weak ideal property
implies topological dimension zero in complete generality.
For separable C*-algebras
which are purely infinite in the sense of~\cite{KR},
it is equivalent to the ideal property
and to topological dimension zero.
A general separable C*-algebra~$A$ has topological dimension zero
\ifo{} $\OTT A$ has real rank zero;
this is also equivalent to $D \otimes A$ having the ideal property
for some (or any) Kirchberg algebra~$D$.
We rule out by example other results in this direction
that one might hope for.
Topological dimension zero, at least for separable C*-algebras,
is also equivalent to a property
of the sort considered in~\cite{PsnPh2}.
That is, there is an upwards directed class $\CC$
such that a separable C*-algebra~$A$ has topological dimension zero
\ifo{} $A$ is residually hereditarily in~$\CC$.
(See the end of the introduction for other examples of this
kind of property.)

In Section~\ref{Sec_PermCP},
we improve the closure properties under crossed products
of the class of C*-algebras
residually hereditarily in a class~$\CC$
by replacing an arbitrary action of $\Zqt$
with an arbitrary action of a finite abelian $2$-group.
(See Corollary~\ref{C_5X31_CCG2CP}.)
This refinement was overlooked in~\cite{PsnPh2}.
It applies to residual hereditary (proper) infiniteness
as well as to residual~(SP)
and to the combination of
pure infiniteness and the ideal property.
For the weak ideal property
and for topological dimension zero,
better results are already known
(Corollary 8.10 of~\cite{PsnPh2} and Theorem 3.17 of~\cite{PsnPh1}).
However,
for topological dimension zero,
in the separable case we remove the technical hypothesis
in Theorem 3.14 of~\cite{PsnPh1},
and show that if a finite group acts on a separable C*-algebra~$A$
and the fixed point algebra has topological dimension zero,
then $A$ has topological dimension zero.

Section~\ref{Sec_PermTP}
considers minimal tensor products.
For a tensor product to have the weak ideal property
or topological dimension zero,
it is usually necessary that both tensor factors
have the corresponding property.
In the separable case and with one factor exact,
this is sufficient for topological dimension zero.
We show by example that this result fails
without the exactness hypothesis.
For the weak ideal property,
we get only partial results:
if both factors are separable,
one is exact,
and one actually has the ideal property,
or if one factor is exact and one factor
has finite or Hausdorff primitive ideal space,
then the tensor product has the weak ideal property.

Proceeding to a $C_0 (X)$-algebra $A$,
we show that if $X$ is totally disconnected
and the fibers all have the weak ideal property,
topological dimension zero,
residual (SP),
or the combination of pure infiniteness and the ideal property,
then $A$ also has the corresponding property
(for topological dimension zero, provided $A$ is separable).
This result is the analog for these properties of
Theorem~2.1 of~\cite{Psn5}
(for real rank zero)
and Theorem~2.1 of~\cite{Psn6}
(for the ideal property),
but we do not assume that the $C_0 (X)$-algebra is continuous.
If $A$ is a separable continuous $C_0 (X)$-algebra
with nonzero fibers and $X$ is second countable,
then total disconnectedness of $X$ is also necessary.
This is in Section~\ref{Sec_PermBunTD}.
In the short Section~\ref{Sec_PermPI},
we consider locally trivial $C_0 (X)$-algebras
with fibers which are strongly purely infinite
in the sense of Definition~5.1 of~\cite{KR2},
and show (slightly generalizing the known result for $C_0 (X, B)$)
that $A$ is again strongly purely infinite.
In particular, this applies if the fibers are
separable, purely infinite,
and have topological dimension zero.

Section~\ref{Sec_WIPIP}
gives a substantial class of C*-algebras
for which the ideal property,
the weak ideal property,
and topological dimension zero
are all equivalent.
This class includes all separable locally AH~algebras
(using a somewhat restrictive definition:
the spaces involved must have only finitely many connected
components
and the maps must be injective).
It also includes further generalizations of separable AH~algebras,
such as separable LS algebras.
We also prove
that the weak ideal property implies the ideal property
for stable C*-algebras~$A$ such that $\Prim (A)$ is Hausdorff.
However, we show by example that
there is a $Z$-stable C*-algebra
with just one nontrivial ideal
which has the weak ideal property
but not the ideal property.

Ideals in C*-algebras
are assumed to be closed and two sided.
We write $\Zq{n}$ for $\Z / n \Z$,
since the $p$-adic integers will not appear.
If $\af \colon G \to \Aut (A)$
is a action of a group $G$ on a C*-algebra~$A$,
then $A^{\af}$ denotes the fixed point algebra.

Because of the role they play in this paper,
we recall the following definitions from~\cite{PsnPh2}.

\begin{dfn}[Definition 5.1 of~\cite{PsnPh2}]\label{D_5X30_UpClass}
Let $\CC$ be a class of C*-algebras.
We say that $\CC$ is
{\emph{upwards directed}} if
whenever $A$ is a C*-algebra which contains a subalgebra isomorphic to
an algebra in~$\CC$,
then $A \in \CC$.
\end{dfn}

\begin{dfn}[Definition 5.2 of~\cite{PsnPh2}]\label{D_5X30_ResHerC}
Let $\CC$ be an upwards directed class of C*-algebras,
and let $A$ be a C*-algebra.
\begin{enumerate}
\item\label{D_5X30_ResHerC_Her}
We say that $A$ is
{\emph{hereditarily in~$\CC$}}
if every nonzero \hsa{} of~$A$ is in~$\CC$.
\item\label{D_5X30_ResHerC_Res}
We say that $A$ is
{\emph{residually hereditarily in~$\CC$}}
if $A / I$ is hereditarily in~$\CC$ for every ideal $I \subset A$
with $I \neq A$.
\end{enumerate}
\end{dfn}

Section~5 of~\cite{PsnPh2}
gives permanence properties for a general condition defined this way.
We recall the conditions of this type
considered in~\cite{PsnPh2},
and add one more to be proved here.
\begin{enumerate}
\item\label{Item_5X31_PIIP}
Let $\CC$ be the class
of all C*-algebras which contain an infinite \pj.
Then $\CC$ is upwards directed (clear)
and a C*-algebra~$A$ is purely infinite and has the ideal property
\ifo{} $A$ is residually hereditarily in~$\CC$.
See the equivalence of conditions (ii) and~(iv)
of Proposition~2.11 of~\cite{PR}
(valid, as shown there, even when $A$ is not separable).
\item\label{Item_5X31_RPI}
Let $\CC$ be the class
of all C*-algebras which contain an infinite element.
Then $\CC$ is upwards directed (clear)
and a C*-algebra~$A$ is (residually) hereditarily infinite
(Definition~6.1 of~\cite{PsnPh2})
\ifo{} $A$ is residually hereditarily in~$\CC$.
(See Corollary~6.5 of~\cite{PsnPh2}.
We should point out that,
by Lemma 2.2(iii) of~\cite{KR},
if $D$ is a C*-algebra,
$B \subset D$ is a hereditary subalgebra,
and $a$ and $b$ are positive elements of~$B$
such that $a$ is Cuntz subequivalent to $b$ relative to~$D$,
then $a$ is Cuntz subequivalent to $b$ relative to~$B$.)
\item\label{Item_5X31_RHPrPI}
Let $\CC$ be the class
of all C*-algebras which contain a properly infinite element.
Then $\CC$ is upwards directed (clear)
and a C*-algebra~$A$ is (residually) hereditarily properly infinite
(Definition~6.2 of~\cite{PsnPh2})
\ifo{} $A$ is residually hereditarily in~$\CC$.
(Lemma 2.2(iii) of~\cite{KR} plays the same role here
as in~(\ref{Item_5X31_RPI}).)
\item\label{Item_5X31_RSP}
Let $\CC$ be the class
of all C*-algebras which
contain a nonzero \pj.
Then $\CC$ is upwards directed (clear).
A C*-algebra~$A$ has Property~(SP)
\ifo{} $A$ is hereditarily in~$\CC$,
and has residual~(SP)
(Definition~7.1 of~\cite{PsnPh2})
\ifo{} $A$ is residually hereditarily in~$\CC$.
(Both statements are clear.
Residual~(SP) appears,
without the name,
as a hypothesis in the discussion after Proposition~4.18 of~\cite{KR2}.)
\item\label{Item_5X31_WIP}
Let $\CC$ be the class
of all C*-algebras $B$ such that $K \otimes B$
contains a nonzero \pj.
Then $\CC$ is upwards directed (clear)
and a C*-algebra~$A$ has the weak ideal property
(Definition~8.1 of~\cite{PsnPh2};
recalled in Definition~\ref{D_5Y27_WIP} below)
\ifo{} $A$ is residually hereditarily in~$\CC$.
(This is shown at the beginning of the proof of
Theorem~8.5 of~\cite{PsnPh2}.)
\item\label{Item_5X31_TDimZ}
Let $\CC$ be the class
of all C*-algebras $B$ such that $\OT \otimes B$
contains a nonzero \pj.
Then $\CC$ is upwards directed.
(This is clear.)
A separable C*-algebra~$A$
has topological dimension zero
\ifo{} $A$ is residually hereditarily in~$\CC$.
(This will be proved in Theorem~\ref{T_5Y28_TDz} below.)
\end{enumerate}

\section{Topological dimension zero}\label{Sec_TDZ}

\indent
In this section,
we prove
(Theorem~\ref{T_5Y28_wipImpTDz})
that the weak ideal property implies topological dimension zero
for general C*-algebras.
We then give characterizations of topological dimension zero
for separable C*-algebras
(Theorem~\ref{T_5Y28_TDz})
and purely infinite separable C*-algebras (Theorem~\ref{T_5Y27_spiTDz}),
in terms of other properties of the algebra,
in terms of properties of their tensor
products with suitable Kirchberg algebras,
and (for general separable C*-algebras)
of the form of being residually hereditarily in
suitable upwards directed classes.
We also give two related counterexamples.
In particular,
there is a separable purely infinite unital nuclear C*-algebra~$A$
with one nontrivial ideal such that $\OTT A \cong A$
and $\RR (A) = 0$,
and an action $\af \colon \Zqt \to \Aut (A)$,
such that $\RR (C^* (\Zqt, A, \af)) \neq 0$.

We recall two definitions from~\cite{PsnPh1}.
We call a not necessarily Hausdorff space
{\emph{locally compact}}
if the compact (but not necessarily closed) neighborhoods
of every point $x \in X$ form a neighborhood base at~$x$.

\begin{dfn}[Remark 2.5(vi) of~\cite{BP09};
 Definition~3.2 of~\cite{PsnPh1}]\label{D_5Y27_TDZero311}
Let $X$ be a locally compact
but not necessarily Hausdorff topological space.
We say that $X$ has
{\emph{topological dimension zero}}
if for every $x \in X$ and every open set $U \subset X$
such that $x \in U$,
there exists a compact open (but not necessarily closed)
subset $Y \subset X$
such that $x \in Y \subset U$.
(Equivalently,
$X$ has a base for its topology consisting of
subsets which are compact and open, but not necessarily closed.)
We further say that a C*-algebra~$A$
has
{\emph{topological dimension zero}}
if $\Prim (A)$ has topological dimension zero.
\end{dfn}

\begin{dfn}[Definition~3.4 of~\cite{PsnPh1}]\label{D_5Y27_COExh311}
Let $X$ be a not necessarily Hausdorff topological space.
A {\emph{compact open exhaustion}}
of~$X$
is an increasing net
$(Y_{\ld})_{\ld \in \Ld}$
of compact open subsets $Y_{\ld} \subset X$
such that $X = \bigcup_{\ld \in \Ld} Y_{\ld}$.
\end{dfn}

We further recall (Lemma 3.10 of~\cite{PsnPh1};
see Definition~3.9 of~\cite{PsnPh1}
or page~53 of~\cite{PR} for the original definition)
that if $A$ is a C*-algebra and $I \subset A$ is an ideal,
then $I$ is compact \ifo{} $\Prim (I)$ is a compact open
(but not necessarily closed) subset of $\Prim (A)$.

Finally, we recall the definition of the weak ideal property.

\begin{dfn}[Definition~8.1 of~\cite{PsnPh2}]\label{D_5Y27_WIP}
Let $A$ be a C*-algebra.
We say that $A$ has the {\emph{weak ideal property}}
if whenever $I \subset J \subset K \otimes A$
are ideals in $K \otimes A$
such that $I \neq J$,
then $J / I$
contains a nonzero \pj.
\end{dfn}

\begin{lem}\label{L_5Y27_OnePj}
Let A be a C*-algebra with the weak ideal property.
Let $I_1 \subset I_2 \subset A$
be ideals with $I_1 \neq I_2$.
Then there exists an ideal $J \subset A$
with $I_1 \subsetneqq J$
and such that $K \otimes (J / I_1)$
is generated as an
ideal by a single nonzero projection.
\end{lem}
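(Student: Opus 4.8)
The plan is to reduce the statement to the definition of the weak ideal property by passing through the standard correspondence between the ideals of a C*-algebra~$B$ and the ideals of its stabilization $K \otimes B$. Recall that $L \mapsto K \otimes L$ is an inclusion-preserving bijection from the ideals of~$B$ onto the ideals of $K \otimes B$; in particular, every ideal of $K \otimes B$ has the form $K \otimes L$ for a unique ideal $L \subseteq B$. I will use this twice, once for $B = A$ and once for $B = I_2 / I_1$.

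First I would produce a nonzero projection. Since $I_1 \subseteq I_2$, we have ideals $K \otimes I_1 \subseteq K \otimes I_2$ of $K \otimes A$. Because $K \otimes (\,\cdot\,)$ is exact, $(K \otimes I_2) / (K \otimes I_1) \cong K \otimes (I_2 / I_1)$, which is nonzero as $I_2 / I_1 \neq 0$; hence $K \otimes I_1 \neq K \otimes I_2$. Applying the weak ideal property (Definition~\ref{D_5Y27_WIP}) to the ideals $K \otimes I_1 \subseteq K \otimes I_2$ of $K \otimes A$ then yields a nonzero projection $p \in (K \otimes I_2) / (K \otimes I_1) \cong K \otimes (I_2 / I_1)$.

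Next I would identify the ideal~$J$. Let $E \subseteq K \otimes (I_2 / I_1)$ be the ideal generated by~$p$. By the ideal correspondence for $B = I_2 / I_1$, we have $E = K \otimes L$ for a unique ideal $L \subseteq I_2 / I_1$, and $L = J / I_1$ for a unique ideal $J \subseteq A$ with $I_1 \subseteq J \subseteq I_2$ (using that a closed ideal of the closed ideal $I_2 \subseteq A$ is again a closed ideal of~$A$, together with the correspondence between ideals of the quotient $I_2 / I_1$ and ideals of $I_2$ containing $I_1$). Thus $K \otimes (J / I_1) = E$, and since $p \neq 0$ we get $E \neq 0$, so $J / I_1 \neq 0$, i.e.\ $I_1 \subsetneqq J$.

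It remains to check that $p$ generates $K \otimes (J / I_1)$ as an ideal of $K \otimes (J / I_1)$ itself, rather than merely of the ambient algebra $K \otimes (I_2 / I_1)$ in which $E$ was formed. This follows from the elementary fact that if $N$ is a closed two-sided ideal of a C*-algebra~$D$ and $x \in N$, then the closed ideal of~$N$ generated by~$x$ equals the closed ideal of~$D$ generated by~$x$; one proves this by approximating $bxb'$ (for $b, b' \in D$) with $(b e_\ld) x (e_\ld b')$ for an approximate identity $(e_\ld)$ of~$N$, noting $b e_\ld, e_\ld b' \in N$. Taking $D = K \otimes (I_2 / I_1)$, $N = K \otimes (J / I_1)$, and $x = p$ shows that the ideal generated by $p$ inside $K \otimes (J / I_1)$ is all of $E = K \otimes (J / I_1)$, completing the proof. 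The argument is essentially formal; the only points needing care are keeping track of which algebra each ideal is generated in, and invoking exactness of $K \otimes (\,\cdot\,)$ and the stabilization ideal correspondence at the appropriate steps, so I do not anticipate a genuine obstacle.
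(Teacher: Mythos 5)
Your proposal is correct and follows essentially the same route as the paper: apply the weak ideal property to get a nonzero projection in $K \otimes (I_2/I_1)$, take the ideal it generates, and pull that back through the stabilization ideal correspondence to an ideal $J$ with $I_1 \subsetneqq J \subseteq I_2$. Your final paragraph merely makes explicit a point the paper leaves implicit (that the ideal generated by the projection inside $K \otimes (J/I_1)$ agrees with the ideal generated in the ambient algebra), which is a correct and harmless addition.
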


\begin{proof}
Since $A$ has the weak ideal property and
$K \otimes (I_{2}/I_{1}) \neq 0$,
there is a \nzp{} $e \in K \otimes (I_{2}/I_{1})$.
Let $I \subset  K \otimes (I_{2}/I_{1})$
be the ideal generated by~$e$.
Then there is an ideal $J \subset A$ with
$I_1 \subset J \subset I_2$ such that $I = K \otimes (J / I_1)$.
Since $J / I_1 \neq 0$, it follows that $J \neq I_1$.
\end{proof}

\begin{lem}\label{L_5Y27_GetCptOpen}
Let $A$ be a C*-algebra,
let $F \subset A$ be a finite set of \pj{s},
and let $I \subset A$ be the ideal generated by~$F$.
Then $\Prim (I)$ is a compact open subset of $\Prim (A)$.
\end{lem}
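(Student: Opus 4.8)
The plan is to reduce to the equivalence recalled just before Definition~\ref{D_5Y27_WIP}: an ideal $I \subset A$ has $\Prim(I)$ compact open in $\Prim(A)$ exactly when $I$ is a \emph{compact} ideal, meaning that whenever $(J_\ld)_{\ld \in \Ld}$ is an upward directed net of ideals of $A$ with $I \subset \ov{\bigcup_{\ld} J_\ld}$, then already $I \subset J_\ld$ for some~$\ld$. Thus it suffices to prove that the ideal generated by a finite set of \pj{s} is compact in this sense.

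First I would treat a single projection $p \in A$, writing $I_p = \ov{ApA}$ for the ideal it generates. Suppose $I_p \subset \ov{\bigcup_\ld J_\ld}$; in particular $p$ lies in this closed ideal, so there is an index $\ld_0$ and a self-adjoint element $a \in J_{\ld_0}$ with $\|p - a\| < 1$ (replace an approximant by its real part, using that ideals are self-adjoint). Compressing to the corner, set $b = pap \in J_{\ld_0}$; then $\|p - b\| = \|p(p - a)p\| \le \|p - a\| < 1$. In the corner $pAp$, which is a C*-algebra with unit $p$, the element $b$ lies within distance~$<1$ of the identity, hence is invertible in $pAp$. Since $J_{\ld_0} \cap pAp$ is an ideal of $pAp$ containing the invertible element~$b$, it is all of $pAp$; in particular $p \in J_{\ld_0}$, and therefore $I_p \subset J_{\ld_0}$. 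This shows $I_p$ is compact.

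For a finite set $F = \{p_1, \dots, p_n\}$ of \pj{s}, the ideal it generates is the (automatically closed) finite sum $I = I_{p_1} + \cdots + I_{p_n}$. If $I \subset \ov{\bigcup_\ld J_\ld}$, then each $p_j$ lies in this ideal, so by the single-projection case there are indices $\ld_1, \dots, \ld_n$ with $p_j \in J_{\ld_j}$; choosing $\mu$ dominating all of them by upward directedness gives $F \subset J_\mu$ and hence $I \subset J_\mu$. Thus $I$ is compact, and by the recalled equivalence $\Prim(I)$ is compact open in $\Prim(A)$.

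I expect the one genuinely nontrivial step to be the single-projection case, and specifically the passage from ``$p$ lies in the closed ideal $\ov{\bigcup_\ld J_\ld}$'' to ``$p$ lies in some~$J_\ld$''. The corner-compression-plus-invertibility argument is what makes this work for a \emph{projection} (it would fail for a general positive element); everything else---self-adjointness of the approximant, closedness of a finite sum of ideals, and the cofinality step for the finite union---is routine bookkeeping.
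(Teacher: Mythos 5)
Your proof is correct, and it follows the same overall strategy as the paper: both reduce the statement to showing that $I$ is a compact ideal (via the equivalence recalled after Definition~\ref{D_5Y27_COExh311}), and both then show that a projection lying in the closed union of an upward directed net of ideals already lies in one of them. The only real difference is how that last step is carried out. The paper invokes a standard functional calculus fact to produce a universal $\ep > 0$ such that a projection within distance $\ep$ of a subalgebra admits a nearby projection $q$ in that subalgebra; it then writes $p_j = s_j q_j s_j^*$ with $q_j \in I_{\ld}$ and uses that $I_{\ld}$ is an ideal to conclude $p_j \in I_{\ld}$. You instead take a self-adjoint approximant $a \in J_{\ld_0}$ with $\| p - a \| < 1$, compress to $b = p a p \in J_{\ld_0}$, and observe that $b$ is invertible in the corner $p A p$, which forces $p \in J_{\ld_0}$ (one can even shortcut your last sentence: if $c \in pAp$ satisfies $bc = p$, then $p = bc \in J_{\ld_0}$ directly). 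Your version is a bit more self-contained, since it avoids the functional calculus black box and the detour through Murray--von Neumann equivalence, at the cost of using from the outset that $J_{\ld_0}$ is an ideal rather than merely a subalgebra; the paper's version isolates a reusable perturbation fact of independent use elsewhere in the paper. Your handling of the finite set (decomposing $I$ as the closed sum $I_{p_1} + \cdots + I_{p_n}$ and using upward directedness to find a common index) matches what the paper does implicitly by choosing a single $\ld$ with $\dist (p_j, I_{\ld}) < \ep$ for all $j$.
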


\begin{proof}
This can be shown by using the same argument as in (iii) implies~(i)
in the proof of Proposition~2.7 of~\cite{PR}.
However,
we can give a more direct proof
(not involving the Pedersen ideal).
As there, we prove that $I$ is compact
(as recalled after Definition~\ref{D_5Y27_COExh311}).
So let $(I_{\ld})_{\ld \in \Ld}$
be an increasing net of ideals in~$A$
such that ${\overline{\bigcup_{\ld \in \Ld} I_{\ld} }} = I$.
Standard functional calculus arguments produce
$\ep > 0$ such that whenever $B$ is a C*-algebra,
$C \subset B$ is a subalgebra,
and $p \in B$ is a \pj{} such that $\dist (p, C) < \ep$,
then there is a \pj~$q \in C$
such that $\| q - p \| < 1$,
and in particular $q$ is \mvnt~$q$.
Write $F = \{ p_1, p_2, \ldots, p_n \}$.
Choose $\ld \in \Ld$
such that $\dist (p_j, I_{\ld}) < \ep$
for $j = 1, 2, \ldots, n$.
Let $q_1, q_2, \ldots, q_n \in I_{\ld}$
be projections obtained from the choice of~$\ep$.
Then there are partial isometries $s_1, s_2, \ldots, s_n \in A$
such that $p_j = s_j q_j s_j^*$ for $j = 1, 2, \ldots, n$.
So $p_1, p_2, \ldots, p_n \in I_{\ld}$,
whence $I_{\ld} = I$.
This completes the proof.
\end{proof}

\begin{lem}\label{L_5Y27_CptExh}
Let $A$ be a C*-algebra,
and let $I \subset A$ be an ideal.
Suppose that there is a collection $(I_{\ld})_{\ld \in \Ld}$
(not necessarily a net)
of ideals in~$A$
such that $I$ is the ideal generated by $\bigcup_{\ld \in \Ld} I_{\ld}$
and such that $\Prim (I_{\ld})$ has a compact open exhaustion
(as in Definition~\ref{D_5Y27_COExh311})
for every $\ld \in \Ld$.
Then $\Prim (I)$ has a compact open exhaustion.
\end{lem}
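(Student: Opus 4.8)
The plan is to translate the whole statement into the topology of $\Prim (A)$ using the standard lattice isomorphism between the ideals of~$A$ and the open subsets of $\Prim (A)$. To each ideal $J \subset A$ I associate the open set $U_J = \{ P \in \Prim (A) : J \not\subset P \}$; then $J \mapsto U_J$ is an order isomorphism from the ideals of~$A$ onto the open subsets of $\Prim (A)$, and $\Prim (J)$ is canonically homeomorphic to $U_J$. First I would record the one algebraic fact I need: the ideal generated by $\bigcup_{\ld \in \Ld} I_\ld$ corresponds to $\bigcup_{\ld \in \Ld} U_{I_\ld}$. This is immediate, since a primitive ideal~$P$ contains $\ov{\sum_\ld I_\ld}$ exactly when it contains every~$I_\ld$. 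Thus, writing $U_\ld = U_{I_\ld}$ and $U = U_I$, I have $U = \bigcup_{\ld \in \Ld} U_\ld$, and it suffices to produce a compact open exhaustion of $U$ viewed inside $\Prim (A)$, because $U$ is homeomorphic to $\Prim (I)$.

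Next I would use the hypothesis one index at a time. For each~$\ld$, the homeomorphism $\Prim (I_\ld) \cong U_\ld$ carries the given compact open exhaustion of $\Prim (I_\ld)$ to an increasing net $(Y_{\ld, \mu})_{\mu \in M_\ld}$ of subsets of $U_\ld$ that are compact and open in the subspace topology and satisfy $U_\ld = \bigcup_{\mu} Y_{\ld, \mu}$. The key observation is that these pieces are compact and open not merely in $U_\ld$ but in all of $\Prim (A)$: openness passes up because $U_\ld$ is open in $\Prim (A)$, while compactness is intrinsic to the subspace topology and is therefore unaffected by the ambient space. This is precisely where I use that each $U_\ld$ is open.

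Finally I would assemble the pieces. Let $\mathcal I = \bigsqcup_{\ld \in \Ld} \{\ld\} \times M_\ld$ index the collection of all the $Y_{\ld, \mu}$, and for each finite subset $S \subset \mathcal I$ put $Z_S = \bigcup_{(\ld, \mu) \in S} Y_{\ld, \mu}$. Each $Z_S$ is a finite union of sets that are compact and open in $\Prim (A)$, hence itself compact and open; the finite subsets of $\mathcal I$ form a directed set under inclusion with $S_1 \subset S_2$ implying $Z_{S_1} \subset Z_{S_2}$; and $\bigcup_S Z_S = \bigcup_{(\ld, \mu)} Y_{\ld, \mu} = \bigcup_\ld U_\ld = U$. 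Hence $(Z_S)_S$ is a compact open exhaustion of $U \cong \Prim (I)$, as required.

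I expect the only delicate point to be the intrinsic compactness/openness step in the second paragraph: one must be sure that the compact open exhaustion of each $\Prim (I_\ld)$, which a priori lives in $\Prim (I_\ld)$, genuinely produces compact open subsets of the ambient space $\Prim (A)$. Everything else — the lattice isomorphism, the fact that sums of ideals correspond to unions of open sets, and the directedness of the family of finite unions — is routine.
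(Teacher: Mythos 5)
Your proof is correct and follows the same route as the paper, which simply asserts that ``a union of open sets with compact open exhaustions also has a compact open exhaustion'' and leaves the verification to the reader. Your argument --- passing to open subsets of $\Prim(A)$ via the ideal lattice, noting that compactness is intrinsic while openness passes up from the open subset $U_\ld$, and taking the net of finite unions --- is exactly that verification, carried out correctly.
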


\begin{proof}
It is easily checked that a union of open sets
with compact open exhaustions
also has a compact open exhaustion.
\end{proof}

\begin{prp}\label{P_5Y28_LargestCOE}
Let $A$ be a C*-algebra.
Then there is a largest ideal $I \subset A$
such that $\Prim (I)$ has a compact open exhaustion.
\end{prp}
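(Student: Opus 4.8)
The plan is to produce the largest ideal with the desired property by taking the ideal generated by the union of all ideals whose primitive ideal space has a compact open exhaustion, and then using \Lem{L_5Y27_CptExh} to show this generating ideal itself has the property.

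First I would let $\Ld$ index the collection of all ideals $I_\ld \subset A$ such that $\Prim(I_\ld)$ has a compact open exhaustion; this collection is nonempty since the zero ideal qualifies (its primitive ideal space is empty, which is trivially compact open and exhausted by itself). Let $I$ be the ideal generated by $\bigcup_{\ld \in \Ld} I_\ld$. By \Lem{L_5Y27_CptExh}, applied with this very collection $(I_\ld)_{\ld \in \Ld}$, the space $\Prim(I)$ has a compact open exhaustion. Thus $I$ itself is one of the ideals in our collection, and it visibly contains every $I_\ld$, so it is the largest element of the collection.

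The only point requiring care is verifying that $I$ is genuinely the \emph{largest} ideal with the property, not merely an upper bound for the generators: any ideal $J \subset A$ with $\Prim(J)$ having a compact open exhaustion is by definition one of the $I_\ld$, hence $J \subset I$. So $I$ dominates every ideal with the property, and since $I$ has the property, it is the largest such ideal. I expect the main (indeed only) subtlety to be the appeal to \Lem{L_5Y27_CptExh}: one must observe that the lemma does not require the collection to be a net — as its statement explicitly allows — so closing under the ideal generated by an arbitrary union is legitimate, and the resulting $\Prim(I)$ inherits a compact open exhaustion. Everything else is the formal observation that a union of a family closed under the operation that preserves membership has a maximum given by the generated object.
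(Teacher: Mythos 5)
Your proposal is correct and is essentially the paper's own proof: the paper likewise takes the smallest ideal containing the union of all ideals whose primitive ideal spaces have compact open exhaustions (phrased there as the closure of the union) and applies Lemma~\ref{L_5Y27_CptExh} to that collection. Your extra remarks on nonemptiness and on verifying maximality are fine but are just making explicit what the paper leaves implicit.
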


\begin{proof}
Let $I$ be the closure
of the union of all ideals $J \subset A$
such that $\Prim (J)$ has a compact open exhaustion.
Then $\Prim (I)$ has a compact open exhaustion
by Lemma~\ref{L_5Y27_CptExh}.
\end{proof}

\begin{thm}\label{T_5Y28_wipImpTDz}
Let $A$ be a C*-algebra with the weak ideal property.
Then $A$ has topological dimension zero.
\end{thm}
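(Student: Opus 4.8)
The goal is to show that $\Prim(A)$ has topological dimension zero, i.e.\ that it has a base of compact open (not necessarily closed) subsets. By the dictionary recalled after Definition~\ref{D_5Y27_COExh311}, compact open subsets of $\Prim(A)$ correspond to compact ideals, and by Lemma~\ref{L_5Y27_GetCptOpen} an ideal generated by finitely many projections is compact. The plan is therefore to produce, from the weak ideal property, enough compact ideals to separate points and open sets in $\Prim(A)$. The natural indexing device is the correspondence between open subsets of $\Prim(A)$ and ideals of $A$: a point $x\in\Prim(A)$ is a primitive ideal $P\subset A$, an open set $U\ni x$ corresponds to an ideal $I_2$ with $P\in\Prim(I_2)$, and I want a compact open $Y$ with $x\in Y\subseteq U$.

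First I would translate the topological condition into an ideal-theoretic one. Fix a primitive ideal $P$ (a point of $\Prim(A)$) and an open neighborhood, which I take to be $\Prim(I_2)$ for the corresponding ideal $I_2\not\subset P$. The idea is to build a compact ideal $J$ with $\Prim(J)\subseteq\Prim(I_2)$ and $P\in\Prim(J)$, i.e.\ $J\subseteq I_2$ but $J\not\subseteq P$. To get such a $J$ I would invoke the weak ideal property in the form of Lemma~\ref{L_5Y27_OnePj}: given any pair $I_1\subsetneqq I_2$ I obtain an ideal with $I_1\subsetneqq J\subseteq I_2$ for which $K\otimes(J/I_1)$ is generated by a single projection. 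The key observation is that a single projection in $K\otimes(J/I_1)$ can be pushed down to finitely many projections, or at least to a situation where Lemma~\ref{L_5Y27_GetCptOpen} applies, so that $\Prim(J/I_1)$ is compact open in $\Prim(A/I_1)$.

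The main structural step, and the part I expect to carry most of the weight, is a \emph{maximality/exhaustion} argument. Rather than produce a single compact open $Y$ directly, I would use Proposition~\ref{P_5Y28_LargestCOE}: let $I\subset A$ be the largest ideal whose primitive ideal space admits a compact open exhaustion. The claim I would aim to prove is that $I=A$; granting this, every point of $\Prim(A)$ lies in one of the compact open members $Y_\lambda$ of the exhaustion, and intersecting with prescribed open sets (using that the $Y_\lambda$ are themselves compact open and that finite manipulations of projection-generated ideals stay compact by Lemma~\ref{L_5Y27_GetCptOpen}) yields the desired base. To show $I=A$, I argue by contradiction: if $I\neq A$, apply the weak ideal property to the pair $I\subsetneqq A$ via Lemma~\ref{L_5Y27_OnePj} to get $J$ with $I\subsetneqq J$ and $K\otimes(J/I)$ generated by one projection, hence $\Prim(J/I)$ compact open in $\Prim(A/I)$, so $\Prim(J)$ picks up a compact open piece beyond~$\Prim(I)$. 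Lemma~\ref{L_5Y27_CptExh} then lets me splice this new compact open exhaustion onto the one for $I$, producing an ideal strictly larger than $I$ still admitting a compact open exhaustion, contradicting maximality.

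The delicate point to get right is the passage through the stabilization $K\otimes A$: the weak ideal property and Lemma~\ref{L_5Y27_OnePj} naturally live there, so I must check that the compact-open conclusion for $\Prim(K\otimes(J/I_1))$ transfers back to $\Prim(J/I_1)$ and hence to $\Prim(A)$. This rests on the homeomorphism $\Prim(B)\cong\Prim(K\otimes B)$ induced by stable isomorphism and on the observation that the ideal lattice of $A$ embeds in that of $K\otimes A$ compatibly with the Prim correspondence, so that a projection generating a compact ideal upstairs corresponds downstairs to a compact ideal in the sense of Lemma~\ref{L_5Y27_GetCptOpen}. Once this bookkeeping is in place, the maximality argument closes cleanly and shows that $\Prim(A)$ has a base of compact open sets, which is exactly topological dimension zero.
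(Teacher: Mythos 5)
You have assembled the right ingredients (Lemma~\ref{L_5Y27_OnePj}, Lemma~\ref{L_5Y27_GetCptOpen}, and a maximality argument via Proposition~\ref{P_5Y28_LargestCOE}), but there is a genuine gap in how you close the argument. Your maximality argument is run only for the algebra $A$ itself, so at best it proves that $\Prim (A)$ has a compact open exhaustion. That conclusion is much weaker than topological dimension zero: for any unital C*-algebra the space $\Prim (A)$ is compact and open in itself, hence trivially admits a compact open exhaustion, yet $C ([0, 1])$ does not have topological dimension zero. The step where you propose to recover a base of compact open sets by ``intersecting with prescribed open sets'' fails, because the intersection of a compact open set with an open set is open but in general not compact, and Lemma~\ref{L_5Y27_GetCptOpen} gives no handle on such intersections. (Your earlier ``local'' sketch---fix $P$ and build a compact ideal $J \subset I_2$ with $J \not\subset P$---also does not go through as stated, since Lemma~\ref{L_5Y27_OnePj} gives no control over whether the ideal it produces fails to be contained in the particular primitive ideal~$P$.)

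The repair is to run the maximality argument for every ideal $I \subset A$, not just for $A$: fix $I$, let $J \subset I$ be the largest ideal such that $\Prim (J)$ has a compact open exhaustion, and derive a contradiction from $J \neq I$ by applying Lemma~\ref{L_5Y27_OnePj} to the pair $J \subsetneqq I$ to obtain $N$ with $J \subsetneqq N \subset I$ and $K \otimes (N / J)$ generated by a single projection. Once every ideal $I$ has $\Prim (I)$ with a compact open exhaustion, topological dimension zero follows from Lemma~3.6 of~\cite{PsnPh1}, which characterizes it exactly by this property of all open subsets; that citation is the missing bridge in your write-up. A second, smaller issue is the splicing step: the new compact open piece $\Prim (N / J)$ is a compact open subset of $\Prim (A / J)$, which is a closed (not open) subset of $\Prim (A)$, so Lemma~\ref{L_5Y27_CptExh} (which handles unions of open sets each having a compact open exhaustion) does not apply; what is needed is Lemma~3.7 of~\cite{PsnPh1}, which combines a compact open exhaustion of an open set $U$ with a compact open subset of the complementary quotient space to produce a compact open exhaustion of the larger open set.
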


\begin{proof}
We will show that for every ideal $I \subset A$,
the subset $\Prim (I)$ has a compact open exhaustion.
The desired conclusion will then follow
from Lemma~3.6 of~\cite{PsnPh1}.

So let $I \subset A$ be an ideal.
By Proposition~\ref{P_5Y28_LargestCOE},
there is a largest ideal $J \subset I$ such that
$\Prim (J)$ has a compact open exhaustion.
We prove that $J = I$.
Suppose not.
Use Lemma~\ref{L_5Y27_OnePj}
to find an ideal $N \subset I$
with $J \subsetneqq N$
and such that $K \otimes (N / J)$ is generated by one \nzp.
Then $\Prim (K \otimes (N / J))$ is a compact open subset
of $\Prim (K \otimes (I / J))$
by Lemma~\ref{L_5Y27_GetCptOpen}.
So $\Prim (N / J)$ is a compact open subset
of $\Prim (I / J)$.
Since $\Prim (J)$ has a compact open exhaustion,
we can apply Lemma 3.7 of~\cite{PsnPh1}
(taking $U = \Prim (J)$)
to deduce that $\Prim (N)$ has a compact open exhaustion.
Since $J \subsetneqq N$,
we have a contradiction.
Thus $J = I$,
and $\Prim (I)$ has a compact open exhaustion.
\end{proof}

The list of equivalent conditions in the next theorem
extends the list
in Corollary 4.3 of~\cite{PR},
by adding condition~(\ref{T_5Y27_spiTDz_wip}).
As discussed in the introduction,
this condition is better behaved
than the related condition~(\ref{T_5Y27_spiTDz_IP}).

\begin{thm}\label{T_5Y27_spiTDz}
Let $A$ be a separable C*-algebra
which is purely infinite in the sense of Definition 4.1 of~\cite{KR}.
Then \tfae:
\begin{enumerate}
\item\label{T_5Y27_spiTDz_rrz}
${\mathcal{O}}_2 \otimes A$ has real rank zero.
\item\label{T_5Y27_spiTDz_O2IP}
${\mathcal{O}}_2 \otimes A$ has the ideal property.
\item\label{T_5Y27_spiTDz_tdZ}
$A$ has topological dimension zero.
\item\label{T_5Y27_spiTDz_IP}
$A$ has the ideal property.
\item\label{T_5Y27_spiTDz_wip}
$A$ has the weak ideal property.
\end{enumerate}
\end{thm}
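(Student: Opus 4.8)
The plan is to take the equivalence of the first four conditions, (\ref{T_5Y27_spiTDz_rrz})--(\ref{T_5Y27_spiTDz_IP}), as given: this is exactly Corollary~4.3 of~\cite{PR}, under the standing hypotheses that $A$ is separable and purely infinite. It then remains only to splice condition~(\ref{T_5Y27_spiTDz_wip}) into the resulting chain, which I would do by establishing the two implications (\ref{T_5Y27_spiTDz_IP})~$\Rightarrow$~(\ref{T_5Y27_spiTDz_wip}) and (\ref{T_5Y27_spiTDz_wip})~$\Rightarrow$~(\ref{T_5Y27_spiTDz_tdZ}). Together with (\ref{T_5Y27_spiTDz_tdZ})~$\Leftrightarrow$~(\ref{T_5Y27_spiTDz_IP}) from Corollary~4.3 of~\cite{PR}, these give the cycle (\ref{T_5Y27_spiTDz_wip})~$\Rightarrow$~(\ref{T_5Y27_spiTDz_tdZ})~$\Rightarrow$~(\ref{T_5Y27_spiTDz_IP})~$\Rightarrow$~(\ref{T_5Y27_spiTDz_wip}), so the weak ideal property joins the other four as equivalent. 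Notably, both new implications are completely general, so I am really just adding two facts about the weak ideal property to the purely infinite equivalences of~\cite{PR}.

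The implication (\ref{T_5Y27_spiTDz_wip})~$\Rightarrow$~(\ref{T_5Y27_spiTDz_tdZ}) requires no work: it is precisely \Thm{T_5Y28_wipImpTDz}, which already gives that the weak ideal property implies topological dimension zero for an arbitrary C*-algebra. For (\ref{T_5Y27_spiTDz_IP})~$\Rightarrow$~(\ref{T_5Y27_spiTDz_wip}) I would prove the general statement that the ideal property implies the weak ideal property. The argument rests on three permanence properties of the ideal property: it passes to $K \otimes A$, to ideals, and to quotients. Granting these, if $A$ has the ideal property then so does $K \otimes A$; hence for ideals $I \subset J \subset K \otimes A$ with $I \neq J$, the ideal $J$ has the ideal property, and therefore so does the quotient $J/I$. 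As $J/I \neq 0$, it is generated as an ideal by its projections and in particular contains a \nzp{}, which is exactly the weak ideal property of~$A$ in the sense of \Def{D_5Y27_WIP}. (One could instead cite the analogous observation in~\cite{PsnPh2}, where the weak ideal property was introduced.)

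I do not expect a genuine obstacle: the theorem is essentially an assembly of \Thm{T_5Y28_wipImpTDz} and Corollary~4.3 of~\cite{PR}, and the only implication demanding a proof is (\ref{T_5Y27_spiTDz_IP})~$\Rightarrow$~(\ref{T_5Y27_spiTDz_wip}). Of its three ingredients, the stability of the ideal property under tensoring with~$K$ is the one worth isolating; passage to ideals and to quotients is routine, using that a closed ideal of a closed ideal of a C*-algebra is again a closed ideal of the whole algebra, and that the image of a generating family of projections under a surjection again generates the image ideal. Should the stability statement not be conveniently citable, the fallback is the direct verification that every ideal of $K \otimes A$ is of the form $K \otimes I$ for an ideal $I \subset A$, and that a generating family $\{ p_\alpha \}$ of projections for $I$ yields the generating family $\{ e_{11} \otimes p_\alpha \}$ for $K \otimes I$.
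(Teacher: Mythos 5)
Your proposal is correct and follows essentially the same route as the paper: the paper cites Corollary~4.3 of~\cite{PR} for the equivalence of conditions (\ref{T_5Y27_spiTDz_rrz})--(\ref{T_5Y27_spiTDz_IP}), dismisses (\ref{T_5Y27_spiTDz_IP})~$\Rightarrow$~(\ref{T_5Y27_spiTDz_wip}) as trivial (it is Proposition~8.2 of~\cite{PsnPh2}), and invokes Theorem~\ref{T_5Y28_wipImpTDz} for (\ref{T_5Y27_spiTDz_wip})~$\Rightarrow$~(\ref{T_5Y27_spiTDz_tdZ}). Your spelled-out argument that the ideal property implies the weak ideal property is sound.
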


\begin{proof}
The equivalence of conditions
(\ref{T_5Y27_spiTDz_rrz}),
(\ref{T_5Y27_spiTDz_O2IP}),
(\ref{T_5Y27_spiTDz_tdZ}),
and (\ref{T_5Y27_spiTDz_IP})
is Corollary~4.3 of~\cite{PR}.
That (\ref{T_5Y27_spiTDz_IP})
implies~(\ref{T_5Y27_spiTDz_wip})
is trivial.
That (\ref{T_5Y27_spiTDz_wip}) implies~(\ref{T_5Y27_spiTDz_tdZ})
is Theorem~\ref{T_5Y28_wipImpTDz}.
\end{proof}

We presume that Theorem~\ref{T_5Y27_spiTDz}
holds without separability.
However,
some of the results used in the proof
of Corollary~4.3 of~\cite{PR}
are only known in the separable case,
and it seems likely to require some work to generalize them.

Recall that a Kirchberg algebra
is a simple separable nuclear purely infinite C*-algebra.

\begin{thm}\label{T_5Y28_TDz}
Let $A$ be a separable C*-algebra.
Then \tfae:
\begin{enumerate}
\item\label{T_5Y28_TDz_tdZ}
$A$ has topological dimension zero.
\item\label{T_5Y28_TDz_O2rrz}
${\mathcal{O}}_2 \otimes A$ has real rank zero.
\item\label{T_5Y28_TDz_O2IP}
${\mathcal{O}}_2 \otimes A$ has the ideal property.
\item\label{T_5Y28_TDz_O2wip}
${\mathcal{O}}_2 \otimes A$ has the weak ideal property.
\item\label{T_5Y28_TDz_OInfIP}
${\mathcal{O}}_{\infty} \otimes A$ has the ideal property.
\item\label{T_5Y28_TDz_OInftywip}
${\mathcal{O}}_{\infty} \otimes A$ has the weak ideal property.
\item\label{T_5Y28_TDz_SomeKwip}
There exists a Kirchberg algebra $D$
such that $D \otimes A$ has the weak ideal property.
\item\label{T_5Y28_TDz_AllKip}
For every Kirchberg algebra $D$,
the algebra $D \otimes A$ has the ideal property.
\item\label{T_5Y28_TDz_RHO2}
$A$ is residually hereditarily in the class
of all C*-algebras $B$ such that $\OT \otimes B$
contains a nonzero \pj.
\item\label{T_5Y28_TDz_RHKtO2}
$A$ is residually hereditarily in the class
of all C*-algebras $B$ such that $K \otimes \OT \otimes B$
contains a nonzero \pj.
\item\label{T_5Y28_TDz_RHOInfty}
$A$ is residually hereditarily in the class
of all C*-algebras $B$ such that ${\mathcal{O}}_{\infty} \otimes B$
contains a nonzero \pj.
\end{enumerate}
\end{thm}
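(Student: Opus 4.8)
The plan is to deduce the entire list from Theorem~\ref{T_5Y27_spiTDz}, applied not to $A$ but to the algebras $D \otimes A$ with $D$ a Kirchberg algebra, using three structural inputs. First, every Kirchberg algebra satisfies $D \cong \OI \otimes D$, and $\OI \otimes \OI \cong \OI$; consequently, for any C*-algebra $A$ the algebra $D \otimes A \cong \OI \otimes (D \otimes A)$ absorbs $\OI$ tensorially, and any C*-algebra that absorbs $\OI$ is (strongly) purely infinite in the sense of Definition~4.1 of~\cite{KR} (this is the elementary direction of the Kirchberg--R\o rdam absorption theorem; see~\cite{KR2}). Thus $D \otimes A$ is separable and purely infinite for every Kirchberg algebra~$D$, in particular for $D = \OT$ and $D = \OI$. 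Second, since $D$ is simple and nuclear, the ideals of $D \otimes A$ are exactly the $D \otimes I$ for ideals $I \subset A$, so $\Prim (D \otimes A) \cong \Prim (A)$ canonically and $A$ has topological dimension zero if and only if $D \otimes A$ does. Third, I will use the Cuntz absorption $\OT \otimes \OT \cong \OT$.

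With these in hand, the equivalence of (\ref{T_5Y28_TDz_tdZ})--(\ref{T_5Y28_TDz_AllKip}) is essentially formal. Applying Theorem~\ref{T_5Y27_spiTDz} to the separable purely infinite algebra $B = D \otimes A$ shows that ``$D \otimes A$ has the ideal property'', ``$D \otimes A$ has the weak ideal property'', and ``$D \otimes A$ has topological dimension zero'' are equivalent; the last is equivalent to~(\ref{T_5Y28_TDz_tdZ}) because $\Prim (D \otimes A) \cong \Prim (A)$. Letting $D$ range over all Kirchberg algebras yields (\ref{T_5Y28_TDz_SomeKwip}) and (\ref{T_5Y28_TDz_AllKip}), and specializing to $D = \OI$ yields (\ref{T_5Y28_TDz_OInfIP}) and (\ref{T_5Y28_TDz_OInftywip}). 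Specializing to $D = \OT$, and using $\OT \otimes B = \OT \otimes \OT \otimes A \cong \OT \otimes A$ to rewrite the real rank zero and ideal property conditions of Theorem~\ref{T_5Y27_spiTDz} for $B = \OT \otimes A$, yields the equivalence of (\ref{T_5Y28_TDz_O2rrz}), (\ref{T_5Y28_TDz_O2IP}), and (\ref{T_5Y28_TDz_O2wip}) with~(\ref{T_5Y28_TDz_tdZ}). (For the existential statement~(\ref{T_5Y28_TDz_SomeKwip}), its forward implication also follows cheaply from Theorem~\ref{T_5Y28_wipImpTDz}: if $D \otimes A$ has the weak ideal property then it has topological dimension zero, hence so does~$A$.)

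For the residual conditions, write $\CC_1$, $\CC_2$, $\CC_3$ for the classes appearing in (\ref{T_5Y28_TDz_RHO2}), (\ref{T_5Y28_TDz_RHKtO2}), and (\ref{T_5Y28_TDz_RHOInfty}). The plan is to prove $\CC_1 = \CC_2 = \CC_3$ as classes, so that the three residual conditions literally coincide. For $\CC_1 = \CC_2$, note that every projection in $K \otimes \OT \otimes B$ is Murray--von Neumann equivalent to one coming from some $M_n \otimes \OT \otimes B$, which is isomorphic to $\OT \otimes B$ via $\OT \cong M_n \otimes \OT$; hence $\OT \otimes B$ and $K \otimes \OT \otimes B$ contain nonzero projections simultaneously. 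For $\CC_1 = \CC_3$, both $\OT \otimes B$ and $\OI \otimes B$ are purely infinite by the first paragraph, and for a purely infinite algebra the presence of a nonzero projection is equivalent to $\Prim (B)$ having a nonempty compact open subset: a nonzero projection generates a compact ideal by Lemma~\ref{L_5Y27_GetCptOpen}, and conversely a compact ideal of a purely infinite algebra contains a nonzero projection by Proposition~2.7 of~\cite{PR}. Thus membership in $\CC_1$ or $\CC_3$ depends only on $\Prim (B)$, and the two classes agree.

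It remains to tie the common residual condition to the rest, which I would do through~(\ref{T_5Y28_TDz_O2IP}). Since $\OT \otimes A$ is purely infinite, item~(\ref{Item_5X31_PIIP}) of the list in the introduction identifies~(\ref{T_5Y28_TDz_O2IP}) with the statement that $\OT \otimes A$ is residually hereditarily in the class of C*-algebras containing an infinite projection. The final step is a bookkeeping lemma: for the fixed algebra $\OT$, this is equivalent to $A$ being residually hereditarily in $\CC_1 = \{ B : \OT \otimes B \text{ has a nonzero projection}\}$, because in the purely infinite algebra $\OT \otimes B$ a nonzero projection is automatically infinite. I expect the nontrivial direction of this lemma to be the main obstacle. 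Given a nonzero hereditary subalgebra $E$ of a quotient $\OT \otimes (A / I)$, one applies the hypothesis to the ideal $L / I$ of $A / I$ generated by the image of~$E$ to obtain a nonzero projection $p$ in $\OT \otimes (L / I)$; since $E$ is a full hereditary subalgebra of the purely infinite algebra $\OT \otimes (L / I)$, Cuntz comparison (Lemma~2.2 of~\cite{KR}) gives $p \precsim e$ for some $e \in E_+$, whence $E$ itself contains a projection equivalent to~$p$. This transfer of projections into full hereditary subalgebras, together with the reconciliation of the residual formulation with the topological one in the spirit of Section~5 of~\cite{PsnPh2}, is the one genuinely nonformal ingredient; everything else reduces to Theorem~\ref{T_5Y27_spiTDz} and the absorption identities.
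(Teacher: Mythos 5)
Your proof is correct. For conditions (\ref{T_5Y28_TDz_tdZ})--(\ref{T_5Y28_TDz_AllKip}) it is essentially the paper's argument: apply Theorem~\ref{T_5Y27_spiTDz} to $D \otimes A$ and use $\Prim (D \otimes A) \cong \Prim (A)$. (The paper sidesteps the need to know that $D \otimes A$ is purely infinite for an arbitrary Kirchberg algebra~$D$ by applying the implication from topological dimension zero to the ideal property to $\OI \otimes D \otimes A$ and then using $\OI \otimes D \cong D$; your route via $\OI$-absorption is equally valid.) You genuinely diverge on (\ref{T_5Y28_TDz_RHO2})--(\ref{T_5Y28_TDz_RHOInfty}). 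The paper proves (\ref{T_5Y28_TDz_tdZ})~$\Leftrightarrow$~(\ref{T_5Y28_TDz_RHO2}) directly: forward, because topological dimension zero passes to hereditary subalgebras of quotients, after which the implication to real rank zero in Theorem~\ref{T_5Y27_spiTDz} produces the projection; backward, by verifying the weak ideal property of $\OTT A$ ideal by ideal. It then identifies the three classes by transporting projections along injective homomorphisms between $\OT$, $\OI$, and $K \otimes \OT$; this uses only nuclearity, works for arbitrary (non-separable) $B$, and never invokes pure infiniteness. You instead route through Proposition~2.11 of~\cite{PR} together with a transfer lemma between ``$\OTT A$ is residually hereditarily in the class of algebras containing an infinite projection'' and ``$A$ is residually hereditarily in your class $\CC_1$''; the hard direction is exactly the full-hereditary-subalgebra comparison you sketch, and it does work, though to get $p \precsim e$ you should take $e$ to be a strictly positive element of $E$ (available since $E$ is separable) and quote Definition~4.1 of~\cite{KR} rather than Lemma~2.2. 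Your identification of $\CC_1$ with $\CC_3$ via compact ideals and Proposition~2.7 of~\cite{PR} is also correct but heavier than the embedding trick, and as a statement about the full classes it tacitly needs that proposition for non-separable algebras---harmless here, since only separable hereditary subalgebras of quotients of~$A$ ever arise. Both routes are sound; the paper's is shorter, while yours isolates a reusable equivalence of residual-hereditary conditions before and after tensoring with~$\OT$.
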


We presume that Theorem~\ref{T_5Y28_TDz}
also holds without separability.

To put conditions
(\ref{T_5Y28_TDz_RHO2}),
(\ref{T_5Y28_TDz_RHKtO2}),
and~(\ref{T_5Y28_TDz_RHOInfty}) in context,
we point out that it is clear that
the classes used in them are upwards directed
in the sense of Definition~\ref{D_5X30_UpClass}.
Applying the results of Section~5 of~\cite{PsnPh2}
does not give any closure properties
for the collection of C*-algebras
with topological dimension zero
which are not already known.
We do get something new,
which at least implicitly is related this characterization;
see Theorem~\ref{T_5Z20_FPAlg} below.

The conditions in Theorem~\ref{T_5Y28_TDz}
are not equivalent to $A$ having the weak ideal property,
since there are nonzero simple separable C*-algebras~$A$,
such as those classified in~\cite{Rzk},
for which $K \otimes A$ has no nonzero projections.
They are also not equivalent to
$\RR ({\mathcal{O}}_{\infty} \otimes A) = 0$.
See Example~\ref{E_5Y28_OINotRRZ} below.

\begin{proof}[Proof of Theorem~\ref{T_5Y28_TDz}]
Since $A$ has topological dimension zero \ifo{}
${\mathcal{O}}_2 \otimes A$ has topological dimension zero,
and since ${\mathcal{O}}_2 \otimes A$ is purely infinite
(by Proposition~4.5 of~\cite{KR}),
the equivalence of (\ref{T_5Y28_TDz_tdZ}),
(\ref{T_5Y28_TDz_O2rrz}),
(\ref{T_5Y28_TDz_O2IP}),
and (\ref{T_5Y28_TDz_O2wip})
follows by applying Theorem~\ref{T_5Y27_spiTDz}
to ${\mathcal{O}}_2 \otimes A$.
Since ${\mathcal{O}}_{\infty} \otimes A$ is purely infinite
(by Proposition~4.5 of~\cite{KR})
and $\OTT {\mathcal{O}}_{\infty} \cong {\mathcal{O}}_2$,
the equivalence of (\ref{T_5Y28_TDz_O2IP}),
(\ref{T_5Y28_TDz_OInfIP}),
and (\ref{T_5Y28_TDz_OInftywip})
follows by applying Theorem~\ref{T_5Y27_spiTDz}
to ${\mathcal{O}}_{\infty} \otimes A$.

We prove the equivalence of (\ref{T_5Y28_TDz_tdZ})
and (\ref{T_5Y28_TDz_RHO2}).
Let $\CC$ be the class
of all C*-algebras $B$ such that $\OT \otimes B$
contains a nonzero \pj.

Assume that $A$ has topological dimension zero;
we prove that $A$ is residually hereditarily in~$\CC$.
Let $I \subset A$ be an ideal,
and let $B \subset A / I$ be a nonzero hereditary subalgebra.
Then $A / I$ has topological dimension zero by Proposition~2.6
of~\cite{BP09} and Lemma~3.6 of~\cite{PsnPh1}.
It follows from Lemma 3.3 of~\cite{PsnPh1}
that $B$ has topological dimension zero.
Use (\ref{T_5Y27_spiTDz_tdZ}) implies~(\ref{T_5Y27_spiTDz_rrz})
in Theorem~\ref{T_5Y27_spiTDz}
to conclude that $\OTT B$ contains a \nzp.

Conversely,
assume that $A$ is residually hereditarily in~$\CC$.
We actually prove that $\OTT A$ has the weak ideal property.
By (\ref{T_5Y27_spiTDz_wip}) implies~(\ref{T_5Y27_spiTDz_tdZ})
in Theorem~\ref{T_5Y27_spiTDz},
and since $\OTT A$ is purely infinite
(by Proposition~4.5 of~\cite{KR}),
it will follow that $\OTT A$ has topological dimension zero.
Since $\Prim (\OTT A) \cong \Prim (A)$,
it will follow that $A$ has topological dimension zero.

Thus,
let $I \subset J \subset \OTT A$
be ideals such that $J \neq I$;
we must show that $K \otimes (J / I)$
contains a \nzp.
Since $\OT$ is simple and nuclear,
there are ideals $I_0 \subset J_0 \subset A$
such that $I = \OTT I_0$ and $J = \OTT J_0$;
moreover,
$J / I \cong \OTT (J_0 / I_0)$.
Since $J_0 / I_0$ is a nonzero hereditary subalgebra of $A / I_0$,
the definition of being hereditarily in~$\CC$
implies that $\OTT (J_0 / I_0)$
contains a \nzp,
so $K \otimes (J / I) \cong K \otimes \OTT (J_0 / I_0)$ does also.
This completes the proof of the equivalence of (\ref{T_5Y28_TDz_tdZ})
and (\ref{T_5Y28_TDz_RHO2}).

We prove equivalence of (\ref{T_5Y28_TDz_RHO2})
and (\ref{T_5Y28_TDz_RHOInfty})
by showing that the two classes involved are equal,
that is,
by showing that if $B$ is any C*-algebra,
then ${\mathcal{O}}_2 \otimes B$
contains a \nzp{}
\ifo{} ${\mathcal{O}}_{\infty} \otimes B$
contains a \nzp.
If ${\mathcal{O}}_2 \otimes B$ contains a \nzp,
use an injective (nonunital) \hm{}
${\mathcal{O}}_2 \to {\mathcal{O}}_{\infty}$
to produce an injective \hm{} of the minimal tensor products
${\mathcal{O}}_2 \otimes_{\mathrm{min}} B
  \to {\mathcal{O}}_{\infty} \otimes_{\mathrm{min}} B$.
Since ${\mathcal{O}}_2$ and ${\mathcal{O}}_{\infty}$ are nuclear,
we have an injective \hm{}
${\mathcal{O}}_2 \otimes B
  \to {\mathcal{O}}_{\infty} \otimes B$,
and hence a \nzp{} in ${\mathcal{O}}_{\infty} \otimes B$.
Using an injective (unital) \hm{}
from ${\mathcal{O}}_{\infty}$ to ${\mathcal{O}}_2$,
the same argument also shows that
if ${\mathcal{O}}_{\infty} \otimes B$
contains a \nzp{}
then so does ${\mathcal{O}}_{2} \otimes B$.

The proof of the equivalence of (\ref{T_5Y28_TDz_RHO2})
and (\ref{T_5Y28_TDz_RHKtO2})
is essentially the same
as in the previous paragraph,
using injective \hm{s}
\[
{\mathcal{O}}_2 \longrightarrow K \otimes {\mathcal{O}}_2
\andeqn
K \otimes {\mathcal{O}}_2
 \longrightarrow {\mathcal{O}}_2 \otimes {\mathcal{O}}_2
 \stackrel{\cong}\longrightarrow {\mathcal{O}}_2.
\]

We have now proved the equivalence of all the conditions
except (\ref{T_5Y28_TDz_SomeKwip}) and~(\ref{T_5Y28_TDz_AllKip}).

It is trivial
that (\ref{T_5Y28_TDz_OInftywip}) implies (\ref{T_5Y28_TDz_SomeKwip})
and
that (\ref{T_5Y28_TDz_AllKip}) implies (\ref{T_5Y28_TDz_OInfIP}).

Assume (\ref{T_5Y28_TDz_SomeKwip}),
so that there is a Kirchberg algebra $D_0$
such that $D_0 \otimes A$ has the weak ideal property.
We prove~(\ref{T_5Y28_TDz_AllKip}).
Let $D$ be any Kirchberg algebra.
By Theorem~\ref{T_5Y28_wipImpTDz},
the algebra $D_0 \otimes A$ has topological dimension zero.
Since
\[
\Prim (D_0 \otimes A)
 \cong \Prim (A)
 \cong \Prim (D \otimes A),
\]
$D \otimes A$ has topological dimension zero.
Apply the already proved implication from
(\ref{T_5Y28_TDz_tdZ})
to~(\ref{T_5Y28_TDz_OInfIP}) with $D \otimes A$ in place of~$A$,
concluding that ${\mathcal{O}}_{\infty} \otimes D \otimes A$
has the ideal property.
Since ${\mathcal{O}}_{\infty} \otimes D \cong D$
(Theorem 3.15 of~\cite{KP1}),
we see that $D \otimes A$
has the ideal property.
\end{proof}

A naive look at condition~(\ref{T_5Y27_spiTDz_rrz})
of Theorem~\ref{T_5Y27_spiTDz}
and the permanence
properties for C*-algebras which are residually hereditarily in some
class~$\CC$
(see Corollary~5.6 and Theorem~5.3 of~\cite{PsnPh2})
might suggest that if ${\mathcal{O}}_{\infty} \otimes A$
has real rank zero and one has an arbitrary action of $\Zqt$
on ${\mathcal{O}}_{\infty} \otimes A$,
or a spectrally free
(Definition~1.3 of~\cite{PsnPh2}) action of any
discrete group
on ${\mathcal{O}}_{\infty} \otimes A$,
then the crossed product should also have real rank zero.
This is false.
We give an example of
a nonsimple purely infinite unital nuclear C*-algebra~$A$
satisfying the Universal Coefficient Theorem
(in fact, with $\OTT A \cong A$),
with exactly one nontrivial ideal,
and such that $\RR (A) = 0$,
and an action $\af \colon \Zqt \to \Aut (A)$,
such that $C^* (\Zqt, A, \af)$
does not have real rank zero.

To put our example in context,
we recall the following.
First, Example~9 of~\cite{Ell3}
gives an example of a pointwise outer action~$\af$
of $\Zqt$
on a simple unital AF~algebra~$A$
such that $C^* (\Zqt, A, \af)$ does not have real rank zero.
Second,
by Corollary~4.4 of~\cite{JO},
if $A$ is purely infinite and simple,
then for any action $\af \colon \Zqt \to \Aut (A)$
the crossed product is again purely infinite.
If $\af$ is pointwise outer,
then $C^* (\Zqt, A, \af)$ is again simple,
so automatically has real rank zero.
Otherwise, $\af$ must be an inner action.
(See Lemma~\ref{L_5Z26_Inner} below.)
Then $C^* (\Zqt, A, \af) \cong A \oplus A$,
so has real rank zero.
Thus, no such example is possible
when $A$ is purely infinite and simple.
Third,
it is possible for $A$ to satisfy $\OTT A \cong A$
but to have
$\OTT C^* (\Zqt, A, \af) \not\cong C^* (\Zqt, A, \af)$.
See Lemma~4.7 of~\cite{Iz1},
where this happens with $A = {\mathcal{O}}_2$.

The following lemma is well known,
but we don't know a reference.

\begin{lem}\label{L_5Z26_Inner}
Let $A$ be a simple C*-algebra,
let $G$ be a finite cyclic group,
and let $\af \colon G \to \Aut (A)$
be an action of $G$ on~$A$.
Let $g_0 \in G$ be a generator of~$G$.
If $\af_{g_0}$ is inner,
then $\af$ is an inner action,
that is,
there is a \hm{} $g \mapsto u_g$
from $G$ to the unitary group of $M (A)$
such that $\af_g (a) = u_g a u_g^*$
for all $g \in G$ and $a \in A$.
\end{lem}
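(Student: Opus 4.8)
The plan is to pin down a single implementing unitary and then rescale it so that its $\card(G)$-th power is the identity. Set $n = \card(G)$, so that $g_0^n = e$ and hence $\af_{g_0}^n = \af_{g_0^n} = \id$. By hypothesis there is a unitary $u \in M(A)$ with $\af_{g_0} = \Ad(u)$, where $\Ad(u)(a) = u a u^*$. Then $\Ad(u^n) = (\Ad u)^n = \af_{g_0}^n = \id$, which says precisely that $u^n$ commutes with every element of~$A$.

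The central step is to show that the only elements of $M(A)$ commuting with all of $A$ are the scalars. First I would check that if $z \in M(A)$ commutes with $A$, then $z$ commutes with all of $M(A)$: for $m \in M(A)$ and $a \in A$, both $ma$ and $a$ lie in~$A$, so $(zm)a = z(ma) = (ma)z = m(az) = (mz)a$; since $A$ is an essential ideal in $M(A)$, this forces $zm = mz$. Thus such a $z$ lies in $Z(M(A))$. Next I would show $Z(M(A)) = \C 1$ using simplicity of~$A$. Taking a self-adjoint $z \in Z(M(A))$ that is not a scalar, the spectrum $\spec(z)$ has at least two points, so continuous functional calculus produces nonzero central positive elements $a = f(z)$ and $b = h(z)$ with $ab = 0$ (choose $f,h \geq 0$ with disjoint supports). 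The closure $\overline{aA}$ is a nonzero ideal of~$A$, hence equals $A$ by simplicity; but then $bA = b\,\overline{aA} \subseteq \overline{baA} = 0$, so $b = 0$ by essentiality, a contradiction. Splitting a general central element into self-adjoint parts gives $Z(M(A)) = \C 1$. Applying this to $u^n$ yields $u^n = \ld 1$ for a scalar $\ld$, and unitarity of $u$ gives $|\ld| = 1$.

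Finally I would rescale. Choosing $\mu \in \C$ with $|\mu| = 1$ and $\mu^n = \ld^{-1}$ (possible since $|\ld| = 1$) and putting $v = \mu u$, we keep $\Ad(v) = \Ad(u) = \af_{g_0}$ while achieving $v^n = \mu^n u^n = 1$. Then setting $u_{g_0^k} = v^k$ for $0 \leq k \leq n-1$ gives a well-defined map on $G$ (because $v^n = 1$) which is multiplicative, since $u_{g_0^j} u_{g_0^k} = v^{j+k} = u_{g_0^{j+k}}$, and which implements $\af$, since $\Ad(u_{g_0^k}) = (\Ad v)^k = \af_{g_0}^k = \af_{g_0^k}$. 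This is the required homomorphism $g \mapsto u_g$ into the unitary group of $M(A)$.

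The one nonformal ingredient is the identification $Z(M(A)) = \C 1$; this is exactly where both the simplicity of $A$ and the essentiality of $A$ in $M(A)$ are used, and I expect it to be the main point requiring care. Everything else is bookkeeping with a single unitary.
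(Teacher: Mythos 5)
Your proposal is correct and follows essentially the same route as the paper: implement $\af_{g_0}$ by a unitary $v$, note that $v^n$ is central in $M(A)$ hence a scalar by simplicity, and rescale by an $n$-th root to get an honest homomorphism on the cyclic group. The only difference is that you spell out the proof that $Z(M(A)) = \C \cdot 1$ for simple $A$, which the paper simply asserts; your argument for that step is also correct.
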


\begin{proof}
Let $n$ be the order of~$G$.

By hypothesis,
there is a unitary $v \in M (A)$
such that $\af_{g_0} (a) = v a v^*$ for all $a \in A$.
Then
$a = \af_{g_0}^n (a) = v^n a v^{-n}$
for all $a \in A$.
Simplicity of $A$ implies that the center of $M (A)$
contains only scalars,
so there is $\ld \in S^1$
such that $v^n = \ld \cdot 1$.
Now choose $\om \in S^1$ such that $\om^{n} = \ld^{-1}$,
giving $(\om v)^n = 1$.
Define $u_{g_0^k} = \om^k v^k$ for
$k = 0, 1, \ldots, n - 1$.
\end{proof}

\begin{exa}\label{E_5Y01_CPNotRRZ}
There is a separable purely infinite unital nuclear C*-algebra~$A$
with exactly one nontrivial ideal~$I$,
satisfying the Universal Coefficient Theorem,
satisfying $\OTT A \cong A$,
and such that $\RR (A) = 0$,
and there is an action $\af \colon \Zqt \to \Aut (A)$
such that $\RR (C^* (\Zqt, A, \af)) \neq 0$.
Moreover, $\af$ is strongly pointwise outer
in the sense of Definition~4.11 of~\cite{Phfgs}
(Definition~1.1 of~\cite{PsnPh2})
and spectrally free in the sense of Definition~1.3 of~\cite{PsnPh2}.

To start the construction,
let $\nu \colon \Zqt \to \Aut ( \OT )$
be the action considered in Lemma~4.7 of~\cite{Iz1}.
Define $B = C^* (\Zqt, \OT, \nu)$.
Lemma~4.7 of~\cite{Iz1}
implies that $B$ is a Kirchberg algebra
(simple, separable, nuclear, and purely infinite)
which is unital and satisfies the Universal Coefficient Theorem,
and moreover that $K_0 (B) \cong \Z \big[ \frac{1}{2} \big]$
and $K_1 (B) = 0$.

Let $P$ be the unital Kirchberg algebra
satisfying the Universal Coefficient Theorem,
$K_0 (P) = 0$, and $K_1 (P) \cong \Z$.
The K\"{u}nneth formula (Theorem~4.1 of~\cite{Sc2})
implies that $K_0 ( P \otimes {\mathcal{O}}_4 ) = 0$
and $K_1 ( P \otimes {\mathcal{O}}_4 ) \cong \Zq{3}$.

Since ${\mathcal{O}}_4$
satisfies the Universal Coefficient Theorem,
and since $K \otimes P \otimes {\mathcal{O}}_4$ and ${\mathcal{O}}_4$
are separable and nuclear,
every possible six term exact sequence
\[
\begin{CD}
K_0 (P \otimes {\mathcal{O}}_4) @>>> M_0
                @>>> K_0 ({\mathcal{O}}_4)   \\
@A{\exp}AA & &  @VV{\partial}V            \\
K_1 ({\mathcal{O}}_4) @<<< M_1
       @<<< K_1 (P \otimes {\mathcal{O}}_4)
\end{CD}
\]
(for any possible choice of abelian groups $M_0$ and $M_1$
and \hm{s} $\exp$ and~$\partial$)
is realized as the K-theory of an exact sequence
\begin{equation}\label{Eq_5Y01_Sq}
0 \longrightarrow K \otimes P \otimes {\mathcal{O}}_4
  \longrightarrow D
  \longrightarrow {\mathcal{O}}_4
  \longrightarrow 0,
\end{equation}
in which $D$ is unital,
$K_0 (D) \cong M_0$, and $K_1 (D) \cong M_1$.
Choose the exact sequence~(\ref{Eq_5Y01_Sq})
such that the connecting map
\begin{equation}\label{Eq_4Y01_Conn}
\exp \colon K_0 ( {\mathcal{O}}_4 )
 \to K_1 \big( P \otimes {\mathcal{O}}_4 \big)
\end{equation}
is an isomorphism.
Define $A = \OTT D$.
Let $\io \colon \Zqt \to \Aut (D)$
be the trivial action,
and let $\af = \nu \otimes \io \colon \Zqt \to \Aut (A)$
be the obvious action on the tensor product.

It follows from
the isomorphisms $\OTT {\mathcal{O}}_4 \cong \OT$
and $\OTT P \otimes {\mathcal{O}}_4 \cong \OT$
that $A$ fits into an exact sequence
\[
0 \longrightarrow K \otimes \OT
  \longrightarrow A
  \longrightarrow {\mathcal{O}}_2
  \longrightarrow 0.
\]
Theorem 3.14 and Corollary 3.16 of~\cite{BP}
therefore imply that $\RR (A) = 0$.
There is also an exact sequence of crossed products
\[
0 \longrightarrow
      C^* \big( \Zqt, \, {\mathcal{O}}_2 \otimes
              K \otimes P \otimes {\mathcal{O}}_4 \big)
  \longrightarrow C^* (\Zqt, A, \af)
  \longrightarrow
    C^* \big( \Zqt, \, {\mathcal{O}}_2 \otimes {\mathcal{O}}_4 \big)
  \longrightarrow 0,
\]
in which the actions on the ideal and quotient are the tensor product
of~$\nu$ and the trivial action.
This sequence reduces to
\[
0 \longrightarrow B \otimes K \otimes P \otimes {\mathcal{O}}_4
  \longrightarrow B \otimes D
  \longrightarrow B \otimes {\mathcal{O}}_4
  \longrightarrow 0,
\]
in which the maps are gotten from those of~(\ref{Eq_5Y01_Sq})
by tensoring them with $\id_B$.
It follows from K\"{u}nneth formula (Theorem~4.1 of~\cite{Sc2})
that
\[
K_0 (B \otimes {\mathcal{O}}_4)
 \cong K_1 \big( B \otimes K \otimes P \otimes {\mathcal{O}}_4 \big)
 \cong \Z \big[ \tfrac{1}{2} \big] \otimes \Zq{3}
 \cong \Zq{3}.
\]
By naturality,
the connecting map
\[
K_0 ( B \otimes {\mathcal{O}}_4 )
 \to K_1 \big( B \otimes K \otimes P \otimes {\mathcal{O}}_4 \big)
\]
is the tensor product of the
isomorphism~(\ref{Eq_4Y01_Conn})
with $\id_{\Z [ \frac{1}{2} ]}$,
and is hence nonzero.
Since every class in $K_0 ( B \otimes {\mathcal{O}}_4 )$
is represented by a \pj{}
in $B \otimes {\mathcal{O}}_4$,
it follows from the six term exact sequence in K-theory
that projections in $B \otimes {\mathcal{O}}_4$
need not lift to \pj{s} in $B \otimes D$.
Theorem 3.14 of~\cite{BP}
therefore implies that $\RR (B \otimes D) \neq 0$.
Thus $\RR (C^* (\Zqt, A, \af)) \neq 0$.

It remains to prove strong pointwise outerness and spectral freeness.
In our case,
these are equivalent by Theorem~1.16 of~\cite{PsnPh2},
so we prove strong pointwise outerness.
This reduces to proving that automorphisms
of ${\mathcal{O}}_2 \otimes K \otimes P \otimes {\mathcal{O}}_4$
and
${\mathcal{O}}_2 \otimes {\mathcal{O}}_4$
coming from the nontrivial element of $\Zqt$ are outer.
The automorphism of ${\mathcal{O}}_2$
coming from the action $\nu$
and the nontrivial element of $\Zqt$ is outer,
since otherwise the action would be inner by Lemma~\ref{L_5Z26_Inner},
so crossed product would be
${\mathcal{O}}_2 \oplus {\mathcal{O}}_2$.
We can now apply Proposition 1.19 of~\cite{PsnPh2} twice,
both times using $\nu \colon \Zqt \to \Aut ( {\mathcal{O}}_2 )$
in place of $\af \colon G \to \Aut (A)$,
and in one case using $K \otimes P \otimes {\mathcal{O}}_4$
in place of~$B$ and in the other case using~${\mathcal{O}}_4$.
\end{exa}

We would like to get outerness from
Theorem~1 of~\cite{Was},
but that theorem is only stated for unital C*-algebras.

\begin{exa}\label{E_5Y28_OINotRRZ}
There is a separable purely infinite unital nuclear C*-algebra~$A$
with exactly one nontrivial ideal
and which has the ideal property
but such that ${\mathcal{O}}_{\infty} \otimes A$
does not have real rank zero.

Let $D$ be as in~(\ref{Eq_5Y01_Sq})
in Example~\ref{E_5Y01_CPNotRRZ},
with the property that the connecting map
in~(\ref{Eq_4Y01_Conn}) is nonzero.
Set $A = {\mathcal{O}}_{\infty} \otimes D$.
Since
${\mathcal{O}}_{\infty} \otimes K \otimes P \otimes {\mathcal{O}}_4$
and ${\mathcal{O}}_{\infty} \otimes{\mathcal{O}}_4$
have the weak ideal property (for trivial reasons),
it follows from Theorem 8.5(5) of~\cite{PsnPh2}
that $A$ has the weak ideal property,
and from Theorem~\ref{T_5Y27_spiTDz}
that $A$ has the ideal property.
However,
$A$ is by construction not $K_0$-liftable
in the sense of Definition~3.1 of~\cite{PR},
so Corollary~4.3 of~\cite{PR} implies that
${\mathcal{O}}_{\infty} \otimes A$
(which is of course isomorphic to~$A$)
does not have real rank zero.
\end{exa}

\section{Permanence properties for crossed products}\label{Sec_PermCP}

\indent
In~\cite{PsnPh2} we proved that if
$\CC$ is an upwards directed class of C*-algebras,
$\af$ is a completely
arbitrary action of $\Zqt$ on a C*-algebra~$A$,
and $A^{\af}$ is (residually) hereditarily in~$\CC$,
then $A$ is (residually) hereditarily in~$\CC$.
(See Theorem~5.5 of~\cite{PsnPh2}.)
In particular,
by considering dual actions,
it follows (Corollary~5.6 of~\cite{PsnPh2})
that
crossed products by arbitrary actions of $\Zqt$
preserve the class of C*-algebras
which are (residually) hereditarily in~$\CC$.
Here,
we show how one can easily extend the first result
to arbitrary groups of order a power of~$2$
and the second result
to arbitrary abelian groups of order a power of~$2$.
This should have been done in~\cite{PsnPh2},
but was overlooked there.
We believe these results should be true for any finite group
in place of~$\Zqt$,
or at least any finite abelian group,
but we don't know how to prove them in this generality.

The following lemma is surely well known.

\begin{lem}\label{L_5X31_ActQuot}
Let $G$ be a topological group,
let $A$ be a C*-algebra,
and let $\af \colon G \to \Aut (A)$
be an action of $G$ on~$A$.
Let $N \subset G$ be a closed normal subgroup.
Then there is an action
${\ov{\af}} \colon G / N \to \Aut (A^{\af |_N} )$
such that for $g \in G$ and $a \in A^{\af |_N}$
we have
${\ov{\af}}_{g N} (a) = \af_g (a)$.
Moreover,
$(A^{\af |_N})^{\ov{\af}} = A^{\af}$.
\end{lem}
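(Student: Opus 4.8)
The plan is to construct $\ov{\af}$ by restricting each $\af_g$ to the fixed-point subalgebra $A^{\af |_N}$ and checking that this restriction depends only on the coset $gN$. First I would verify that each $\af_g$ carries $A^{\af |_N}$ into itself, which is the one place normality enters. Fix $g \in G$ and $a \in A^{\af |_N}$; for any $n \in N$, normality gives $g^{-1} n g \in N$, so
\[
\af_n (\af_g (a)) = \af_{ng} (a) = \af_g ( \af_{g^{-1} n g} (a)) = \af_g (a),
\]
the last equality because $g^{-1} n g \in N$ and $a \in A^{\af |_N}$. Hence $\af_g (a) \in A^{\af |_N}$, and applying the same to $g^{-1}$ shows $\af_g$ restricts to an automorphism of $A^{\af |_N}$ with inverse $\af_{g^{-1}}$ restricted to $A^{\af |_N}$. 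This invariance step is the crux; everything after it is formal.

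Next I would show that $g \mapsto \af_g |_{A^{\af |_N}}$ factors through $G / N$. If $gN = hN$, write $h = gn$ with $n \in N$; then for $a \in A^{\af |_N}$ we get $\af_h (a) = \af_g (\af_n (a)) = \af_g (a)$, so the restriction is constant on cosets and I may define $\ov{\af}_{gN} = \af_g |_{A^{\af |_N}}$, which by construction satisfies $\ov{\af}_{gN} (a) = \af_g (a)$. The computation $\ov{\af}_{gN} \, \ov{\af}_{hN} = \af_{gh} |_{A^{\af |_N}} = \ov{\af}_{(gN)(hN)}$ then shows $\ov{\af}$ is a homomorphism into $\Aut (A^{\af |_N})$. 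For the topological-group case, continuity of $\ov{\af}$ in the point-norm topology follows from continuity of $g \mapsto \af_g (a)$ and the fact that $G / N$ carries the quotient topology, so that the factored map $gN \mapsto \af_g (a)$ is automatically continuous.

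Finally, for the ``moreover'' clause I would simply unwind the definitions: an element $a \in A^{\af |_N}$ lies in $(A^{\af |_N})^{\ov{\af}}$ exactly when $\af_g (a) = a$ for every $g \in G$, which is precisely the condition $a \in A^{\af}$; conversely any $a \in A^{\af}$ lies in $A^{\af |_N}$ because $N \subset G$. Hence $(A^{\af |_N})^{\ov{\af}} = A^{\af}$. I do not expect a genuine obstacle here: the whole content of the lemma is the normality computation in the first step, and the remaining verifications (well-definedness, the homomorphism property, continuity, and the fixed-point identity) are all routine.
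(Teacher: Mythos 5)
Your proof is correct and uses the same key computation as the paper: the normality identity $\af_k(\af_g(a)) = \af_g(\af_{g^{-1}kg}(a)) = \af_g(a)$ showing that $A^{\af|_N}$ is $\af_g$-invariant, which is the only step the paper's proof writes out (it dismisses the well-definedness, homomorphism, continuity, and fixed-point verifications as routine, exactly as you note). Nothing further is needed.
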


\begin{proof}
The only thing requiring proof is that
if $g \in G$ and $a \in A^{\af |_N}$
then $\af_g (a) \in A^{\af |_N}$.
So let $k \in N$.
Since $g^{-1} k g \in N$,
we get
\[
\af_k ( \af_g (a))
 = \af_g \big( \af_{g^{-1} k g} (a) \big)
 = \af_g (a).
\]
This completes the proof.
\end{proof}

\begin{thm}\label{T_5X30_CCG}
Let $\CC$ be an upwards directed class of C*-algebras.
Let $G$ be a finite $2$-group,
and
let $\af \colon G \to \Aut (A)$
be an arbitrary action of $G$ on a C*-algebra~$A$.
\begin{enumerate}
\item\label{T_5X30_CCG_Her}
If $A^{\af}$ is hereditarily in~$\CC$,
then $A$ is hereditarily in~$\CC$.
\item\label{T_5X30_CCG_Res}
If $A^{\af}$ is residually hereditarily in~$\CC$,
then $A$ is residually hereditarily in~$\CC$.
\end{enumerate}
\end{thm}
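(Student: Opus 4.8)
The plan is to induct on the order $|G| = 2^n$ of the $2$-group, using the already-known case $G = \Zqt$ (Theorem~5.5 of~\cite{PsnPh2}) as the base case and Lemma~\ref{L_5X31_ActQuot} to pass from $G$ to a smaller quotient group. Both statements~(\ref{T_5X30_CCG_Her}) and~(\ref{T_5X30_CCG_Res}) will be carried along simultaneously, since the base case supplies both and each inductive step preserves both. When $n = 0$ there is nothing to prove (then $A^{\af} = A$), and $n = 1$ is exactly Theorem~5.5 of~\cite{PsnPh2}.

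For the inductive step, suppose the result holds for all actions of $2$-groups of order less than $|G| = 2^n$, with $n \geq 2$, on arbitrary C*-algebras. The key group-theoretic input is that a nontrivial finite $2$-group has nontrivial center (by the class equation), so $Z(G)$ contains an element of order~$2$ and hence a subgroup $N$ of order~$2$; being central, $N$ is normal in~$G$, and $G / N$ is a $2$-group of order $2^{n-1}$. Applying Lemma~\ref{L_5X31_ActQuot} with this~$N$ produces an action $\ov{\af} \colon G / N \to \Aut (A^{\af |_N})$ whose fixed point algebra is $(A^{\af |_N})^{\ov{\af}} = A^{\af}$.

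Now suppose $A^{\af}$ is (residually) hereditarily in~$\CC$. Since $A^{\af} = (A^{\af |_N})^{\ov{\af}}$ and $G / N$ is a $2$-group of order $2^{n-1} < 2^n$, the induction hypothesis applied to the action $\ov{\af}$ of $G/N$ on $A^{\af |_N}$ shows that $A^{\af |_N}$ is (residually) hereditarily in~$\CC$. Finally, $N \cong \Zqt$ and $A^{\af |_N}$ is the fixed point algebra of the action $\af |_N$ of $N$ on~$A$, so the base case (Theorem~5.5 of~\cite{PsnPh2}) applied to $\af |_N$ shows that $A$ is (residually) hereditarily in~$\CC$. This completes the induction, proving both~(\ref{T_5X30_CCG_Her}) and~(\ref{T_5X30_CCG_Res}).

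Since every step is an application of an established result, there is no serious obstacle; the only thing to get right is the two-fold chaining, using the quotient group $G/N$ acting on the fixed point algebra $A^{\af |_N}$ via the induction hypothesis, and then the central order-$2$ subgroup $N$ acting on~$A$ via the base case. The identity $(A^{\af |_N})^{\ov{\af}} = A^{\af}$ supplied by Lemma~\ref{L_5X31_ActQuot} is exactly what glues the two applications together.
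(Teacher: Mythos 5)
Your proof is correct and follows essentially the same strategy as the paper: induction on $|G| = 2^n$, chaining fixed point algebras via Lemma~\ref{L_5X31_ActQuot} and using the $\Zqt$ case (Theorem~5.5 of~\cite{PsnPh2}) as the engine. The only difference is cosmetic: you take $N$ to be a central subgroup of order~$2$ (so the base case is applied to $\af|_N$ acting on $A$ and the induction hypothesis to $G/N$ acting on $A^{\af|_N}$), whereas the paper takes $N$ of index~$2$ via the Sylow theorems and applies the two ingredients in the opposite order; both decompositions work equally well.
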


\begin{proof}
We prove both parts at once.

We use induction on the number $n \in \Nz$
such that the order of~$G$ is~$2^n$.
When $n = 0$,
the statement is trivial.
So assume $n \in \Nz$,
the statement is known for all groups of order~$2^n$,
$G$ is a group with $\card (G) = 2^{n + 1}$,
$A$ is a C*-algebra,
$\af \colon G \to \Aut (A)$ is an action,
and $A^{\af}$ is (residually) hereditarily in~$\CC$.
The Sylow Theorems provide
a subgroup $N \subset G$
such that $\card (N) = 2^n$.
Since $N$ has index~$2$,
$N$ must be normal.
Let ${\ov{\af}} \colon G / N \to \Aut (A^{\af |_N} )$
be as in \Lem{L_5X31_ActQuot}.
Then $(A^{\af |_N})^{\ov{\af}} = A^{\af}$
is (residually) hereditarily in~$\CC$.
Since $G / N \cong \Zqt$,
it follows from Theorem~5.5 of~\cite{PsnPh2}
that $A^{\af |_N}$
is (residually) hereditarily in~$\CC$.
The induction hypothesis
now implies that $A$
is (residually) hereditarily in~$\CC$.
\end{proof}

\begin{cor}\label{C_5X31_CCG2CP}
Let $\CC$ be an upwards directed class of C*-algebras.
Let $G$ be a finite abelian $2$-group,
and
let $\af \colon G \to \Aut (A)$
be an arbitrary action of $G$ on a C*-algebra~$A$.
\begin{enumerate}
\item\label{C_5X31_CCG2CP_Her}
If $A$ is hereditarily in~$\CC$,
then $C^* (G, A, \alpha)$ and $A^{\af}$
are hereditarily in~$\CC$.
\item\label{C_5X31_CCG2CP_Res}
If $A$ is residually hereditarily in~$\CC$,
then $C^* (G, A, \alpha)$ and $A^{\af}$
are residually hereditarily in~$\CC$.
\end{enumerate}
\end{cor}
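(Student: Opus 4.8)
The plan is to deduce Corollary~\ref{C_5X31_CCG2CP} from Theorem~\ref{T_5X30_CCG} by the standard duality trick, exactly as Corollary~5.6 of~\cite{PsnPh2} is deduced from Theorem~5.5 there, but now with the abelian $2$-group~$G$ in place of~$\Zqt$. The key observation is that for a \emph{finite abelian} group~$G$, the Pontryagin dual ${\widehat{G}}$ is again a finite abelian group of the same order, hence a finite abelian $2$-group when $G$ is; and the Takai-type identification of a crossed product with a fixed point algebra of a dual action lets us feed the crossed product into Theorem~\ref{T_5X30_CCG}.

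First I would treat the fixed-point algebra~$A^{\af}$. Since the inclusion $A^\af \subset A$ is as a hereditary subalgebra (indeed $A^\af = E(A)$ is a corner-like object, but more simply $A^\af$ is a C*-subalgebra which is the range of the conditional expectation, and any nonzero hereditary subalgebra of $A^\af$ sits inside~$A$), the hypothesis that $A$ is (residually) hereditarily in~$\CC$ is \emph{downward} in the wrong direction. So instead I would realize $A^\af$ itself as arising from an action: regard $A$ as $C^*(G,A,\af)^{\widehat{\af}}$ is not quite it either. The cleanest route is to apply Theorem~\ref{T_5X30_CCG} to the \emph{crossed product} first, and recover $A^\af$ as a corner of it. Concretely, for finite~$G$ the fixed point algebra $A^\af$ is isomorphic to a corner $p\, C^*(G,A,\af)\, p$ (with $p$ the averaging projection $\frac{1}{\card(G)}\sum_{g} u_g$ in the multiplier algebra), and corners are hereditary subalgebras; since being (residually) hereditarily in~$\CC$ passes to quotients and to hereditary subalgebras by Definition~\ref{D_5X30_ResHerC}, once I show $C^*(G,A,\af)$ is (residually) hereditarily in~$\CC$, the statement for $A^\af$ follows immediately.

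So the heart of the matter is the crossed product $C^*(G,A,\af)$. Here I would invoke Takai duality: the dual action ${\widehat{\af}}\colon {\widehat{G}} \to \Aut\bigl(C^*(G,A,\af)\bigr)$ of the \emph{finite abelian} group ${\widehat{G}}$ satisfies $C^*(G,A,\af)^{\widehat{\af}} \cong A$ (for finite abelian~$G$, the fixed point algebra of the dual action on the crossed product is isomorphic to~$A$, with no stabilization needed). Since $G$ is a finite abelian $2$-group, ${\widehat{G}}$ is also a finite abelian $2$-group. Thus, setting $B = C^*(G,A,\af)$ and applying Theorem~\ref{T_5X30_CCG} to the action ${\widehat{\af}}$ of the finite $2$-group~${\widehat{G}}$ on~$B$, the hypothesis $B^{\widehat{\af}} \cong A$ being (residually) hereditarily in~$\CC$ yields that $B = C^*(G,A,\af)$ is (residually) hereditarily in~$\CC$. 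This proves both parts~(\ref{C_5X31_CCG2CP_Her}) and~(\ref{C_5X31_CCG2CP_Res}) for the crossed product, and then the corner argument of the previous paragraph handles $A^\af$.

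The main obstacle I expect is \emph{bookkeeping with the duality isomorphism} rather than any deep difficulty: I must confirm that for a finite abelian group the dual-action fixed point algebra is genuinely $A$ itself (this is where finiteness of~$G$ matters — no $K\otimes{}$ stabilization enters, unlike the Takai duality for general locally compact abelian groups), and that the isomorphism $A^\af \cong p\,C^*(G,A,\af)\,p$ is correctly hereditary. Both are standard for finite groups, but they are the points where abelianness is essential: the whole reduction rests on ${\widehat{G}}$ being a group of the same type, which requires~$G$ abelian. (This explains why the corollary, unlike Theorem~\ref{T_5X30_CCG}, assumes~$G$ abelian.) Once these identifications are in place, the argument is a direct citation of Theorem~\ref{T_5X30_CCG} together with the closure of the property under passage to hereditary subalgebras in Definition~\ref{D_5X30_ResHerC}.
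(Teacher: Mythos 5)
Correct, and essentially the same as the paper's proof: the crossed product is handled by applying Theorem~\ref{T_5X30_CCG} to the dual action of ${\widehat{G}}$ on $C^*(G,A,\af)$, whose fixed point algebra is~$A$, and $A^{\af}$ is then recovered as a corner (hence hereditary subalgebra) of the crossed product. The only cosmetic difference is that you justify the passage of ``(residually) hereditarily in~$\CC$'' to corners as immediate from Definition~\ref{D_5X30_ResHerC}, whereas the residual version needs the small standard argument relating ideals of a hereditary subalgebra to ideals of the ambient algebra; the paper cites Proposition~5.10 of~\cite{PsnPh2} for exactly this.
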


\begin{proof}
For $C^* (G, A, \alpha)$,
apply Theorem~\ref{T_5X30_CCG}
with $C^* (G, A, \alpha)$ in place of $A$
and the dual action ${\widehat{\af}}$ in place of~$\af$.

For $A^{\af}$,
use the Proposition in~\cite{Rs}
to see that $A^{\af}$
is isomorphic to a corner of $C^* (G, A, \af)$,
and apply Proposition 5.10 of~\cite{PsnPh2}.
\end{proof}

Presumably Corollary~\ref{C_5X31_CCG2CP}
is valid for crossed products by coactions
of not necessarily abelian $2$-groups.
Indeed, possibly the appropriate context is that of
actions of finite dimensional
Hopf C*-algebras.
We will not pursue this direction here.

\begin{cor}\label{C_5Z28_ConseqFP}
Let $G$ be a finite $2$-group,
and
let $\af \colon G \to \Aut (A)$
be an arbitrary action of $G$ on a C*-algebra~$A$.
Suppose $A^{\af}$ has one of the following properties:
residual hereditary infiniteness,
residual hereditary proper infiniteness,
residual (SP),
or the combination of the ideal property and pure infiniteness.
Then $A$ has the same property.
\end{cor}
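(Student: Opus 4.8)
The plan is to obtain this as a direct corollary of Theorem~\ref{T_5X30_CCG}. The point is that each of the four listed properties was recorded, in the list of examples following Definition~\ref{D_5X30_ResHerC}, as nothing other than the condition of being residually hereditarily in a suitable upwards directed class~$\CC$. So once I identify the correct class in each case, the statement reduces to transporting the ``residually hereditarily in~$\CC$'' condition from $A^{\af}$ to~$A$, which is exactly what part~(\ref{T_5X30_CCG_Res}) of Theorem~\ref{T_5X30_CCG} does for an arbitrary action of a finite $2$-group.

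First I would set up the dictionary between the four properties and the four classes. For the combination of the ideal property and pure infiniteness, I take $\CC$ to be the class of all C*-algebras containing an infinite projection, as in item~(\ref{Item_5X31_PIIP}). For residual hereditary infiniteness, I take the class of all C*-algebras containing an infinite element, as in item~(\ref{Item_5X31_RPI}); for residual hereditary proper infiniteness, the class of all C*-algebras containing a properly infinite element, as in item~(\ref{Item_5X31_RHPrPI}); and for residual~(SP), the class of all C*-algebras containing a nonzero projection, as in item~(\ref{Item_5X31_RSP}). In each of these items it is recorded both that $\CC$ is upwards directed and that the named property holds for a C*-algebra if and only if that algebra is residually hereditarily in~$\CC$.

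With the appropriate $\CC$ fixed for a given one of the four cases, the argument is immediate: the hypothesis that $A^{\af}$ has the property means precisely that $A^{\af}$ is residually hereditarily in~$\CC$, whence Theorem~\ref{T_5X30_CCG}, applied to the finite $2$-group~$G$ and the action~$\af$, gives that $A$ is residually hereditarily in~$\CC$; reading the equivalence in the other direction shows that $A$ has the property. Since the whole proof is a single invocation of Theorem~\ref{T_5X30_CCG} once the dictionary is in place, I do not expect any genuine obstacle; the only point requiring care is to verify that all four properties really are covered by the residually hereditary formulation, and this is exactly the content of items~(\ref{Item_5X31_PIIP})--(\ref{Item_5X31_RSP}). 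I would also remark that $G$ is not assumed abelian here, which is consistent with the fact that Theorem~\ref{T_5X30_CCG} requires only that $G$ be a finite $2$-group; the abelianness hypothesis enters only when one wants the analogous statement for the crossed product, as in Corollary~\ref{C_5X31_CCG2CP}.
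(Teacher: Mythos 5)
Your proposal is correct and is essentially the paper's own proof: the paper likewise cites the dictionary from items (\ref{Item_5X31_PIIP})--(\ref{Item_5X31_RSP}) of the introduction identifying each property with being residually hereditarily in an upwards directed class, and then applies Theorem~\ref{T_5X30_CCG}. Your additional remark about why abelianness is not needed here is accurate but not required.
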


\begin{proof}
As discussed in the introduction,
for each of these properties there is an upwards directed
class $\CC$ such that a C*-algebra has the property
\ifo{} it is residually hereditarily in the class~$\CC$.
Apply Theorem~\ref{T_5X30_CCG}.
\end{proof}

\begin{cor}\label{C_5Z28_Conseq}
Let $G$ be a finite abelian $2$-group,
and
let $\af \colon G \to \Aut (A)$
be an arbitrary action of $G$ on a C*-algebra~$A$.
Suppose $A$ has one of the following properties:
residual hereditary infiniteness,
residual hereditary proper infiniteness,
residual (SP),
or the combination of the ideal property and pure infiniteness.
Then $C^* (G, A, \alpha)$ and $A^{\af}$ have the same property.
\end{cor}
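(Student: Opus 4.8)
The plan is to reduce the statement to Corollary~\ref{C_5X31_CCG2CP} by re-expressing each of the four listed properties as the condition of being residually hereditarily in a suitable upwards directed class~$\CC$. As recorded in the list following Definition~\ref{D_5X30_ResHerC}, each of the hypotheses has exactly this form: for residual hereditary infiniteness take $\CC$ to be the class of all C*-algebras containing an infinite element (item~(\ref{Item_5X31_RPI})); for residual hereditary proper infiniteness take the class of those containing a properly infinite element (item~(\ref{Item_5X31_RHPrPI})); for residual~(SP) take the class of those containing a nonzero \pj{} (item~(\ref{Item_5X31_RSP})); and for the combination of pure infiniteness and the ideal property take the class of those containing an infinite \pj{} (item~(\ref{Item_5X31_PIIP})). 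In each case $\CC$ is upwards directed in the sense of Definition~\ref{D_5X30_UpClass}, and a C*-algebra has the property in question \ifo{} it is residually hereditarily in~$\CC$.

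First I would fix whichever of the four properties is under consideration, together with the corresponding upwards directed class~$\CC$, so that the hypothesis on~$A$ becomes precisely the assertion that $A$ is residually hereditarily in~$\CC$. Then I would invoke part~(\ref{C_5X31_CCG2CP_Res}) of Corollary~\ref{C_5X31_CCG2CP} directly: since $G$ is a finite abelian $2$-group and $A$ is residually hereditarily in~$\CC$, both $C^* (G, A, \af)$ and $A^{\af}$ are residually hereditarily in~$\CC$. Translating this conclusion back through the same equivalence shows that both $C^* (G, A, \af)$ and $A^{\af}$ have the original property.

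Because all of the substantive work has already been carried out in Corollary~\ref{C_5X31_CCG2CP}, there is no genuine obstacle here; the argument is essentially a one-line application of that corollary, exactly parallel to the proof of Corollary~\ref{C_5Z28_ConseqFP} but with the crossed-product and fixed-point-algebra conclusion of Corollary~\ref{C_5X31_CCG2CP} used in place of Theorem~\ref{T_5X30_CCG}. The only point requiring care is to confirm that the four named properties are indeed captured by the residual-hereditary formalism for an upwards directed~$\CC$, and this is precisely what items~(\ref{Item_5X31_PIIP}), (\ref{Item_5X31_RPI}), (\ref{Item_5X31_RHPrPI}), and~(\ref{Item_5X31_RSP}) supply.
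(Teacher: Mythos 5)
Your proposal is correct and matches the paper's proof: the paper likewise observes that each of the four properties is of the form ``residually hereditarily in~$\CC$'' for an upwards directed class~$\CC$ (as catalogued in the introduction) and then applies Corollary~\ref{C_5X31_CCG2CP}, exactly in parallel with the proof of Corollary~\ref{C_5Z28_ConseqFP}. No gaps.
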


\begin{proof}
The proof is the same as that of
Corollary~\ref{C_5Z28_ConseqFP},
using Corollary~\ref{C_5X31_CCG2CP}
instead of Theorem~\ref{T_5X30_CCG}.
\end{proof}

We omit the weak ideal property in
Corollary~\ref{C_5Z28_ConseqFP}
and Corollary~\ref{C_5Z28_Conseq},
because better results are already known
(Theorem 8.9 and Corollary 8.10 of~\cite{PsnPh2}).
We also already know
(Theorem 3.17 of~\cite{PsnPh1})
that topological dimension zero is preserved by crossed
products by actions of arbitrary finite abelian groups,
not just abelian $2$-groups.
The result analogous to Corollary~\ref{C_5Z28_ConseqFP}
is Theorem 3.14 of~\cite{PsnPh1},
but it has an extra technical hypothesis.
In the separable case,
we remove this hypothesis.

\begin{thm}\label{T_5Z20_FPAlg}
Let $\af \colon G \to \Aut (A)$
be an action of a finite group $G$
on a separable C*-algebra~$A$.
Suppose that $A^{\af}$ has topological dimension zero.
Then $A$ has topological dimension zero.
\end{thm}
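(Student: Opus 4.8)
The plan is to reduce to the weak ideal property by tensoring with~$\OT$. The difficulty is that $G$ is an arbitrary finite group, so Theorem~\ref{T_5X30_CCG}, which passes membership in an upwards directed class from $A^{\af}$ to~$A$, does not apply (it is stated only for $2$-groups), and topological dimension zero has no better fixed-point theorem for general finite groups available directly. However, for the weak ideal property such a theorem for arbitrary finite group actions is already known (Theorem~8.9 of~\cite{PsnPh2}), and Theorem~\ref{T_5Y28_TDz} lets us translate freely between topological dimension zero of a separable algebra and the weak ideal property of its tensor product with~$\OT$.

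Concretely, I would form the action $\bt = \id_{\OT} \otimes \af \colon G \to \Aut (\OTT A)$. Since $G$ is finite, its fixed-point algebra is the range of the averaging conditional expectation $\frac{1}{\card (G)} \sum_{g \in G} \bt_g$, which equals $\id_{\OT} \otimes E$ for $E \colon A \to A^{\af}$ the averaging expectation of~$\af$; using nuclearity of~$\OT$ to identify the tensor products, this gives $(\OTT A)^{\bt} = \OTT A^{\af}$. Now $A^{\af}$ is separable (being a subalgebra of the separable algebra~$A$) and has topological dimension zero by hypothesis, so the equivalence of conditions (\ref{T_5Y28_TDz_tdZ}) and~(\ref{T_5Y28_TDz_O2wip}) in Theorem~\ref{T_5Y28_TDz}, applied to~$A^{\af}$, shows that $\OTT A^{\af} = (\OTT A)^{\bt}$ has the weak ideal property.

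It then remains to push the weak ideal property from the fixed-point algebra $(\OTT A)^{\bt}$ up to $\OTT A$. This is exactly the content of the fixed-point permanence result for the weak ideal property for arbitrary finite group actions (Theorem~8.9 of~\cite{PsnPh2}), applied to~$\bt$. Hence $\OTT A$ has the weak ideal property, and applying the equivalence of (\ref{T_5Y28_TDz_tdZ}) and~(\ref{T_5Y28_TDz_O2wip}) in Theorem~\ref{T_5Y28_TDz} once more, this time to~$A$, we conclude that $A$ has topological dimension zero.

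The main obstacle is conceptual rather than computational: recognizing that the property at issue, which is not covered by a general-finite-group fixed-point theorem in its ``topological dimension zero'' form, becomes the weak ideal property after tensoring with~$\OT$, for which such a theorem does exist. The two routine points to verify are the identification $(\OTT A)^{\bt} = \OTT A^{\af}$ and that the cited result from~\cite{PsnPh2} indeed covers the arbitrary finite group~$G$ rather than only $2$-groups.
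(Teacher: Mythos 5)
Your proposal is correct and follows essentially the same route as the paper: pass to $\bt = \id_{\OT} \otimes \af$ on $\OTT A$, use Theorem~\ref{T_5Y28_TDz} to convert topological dimension zero of $A^{\af}$ into the weak ideal property of $(\OTT A)^{\bt} = \OTT A^{\af}$, apply Theorem~8.9 of~\cite{PsnPh2} for arbitrary finite groups, and translate back. Your explicit justification of the identification $(\OTT A)^{\bt} = \OTT A^{\af}$ via the averaging expectation is a detail the paper leaves implicit.
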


\begin{proof}
Define an action $\bt \colon G \to \Aut (\OTT A)$
by $\bt_g = \id_{\OT} \otimes \af_g$ for $g \in G$.
The implication from (\ref{T_5Y28_TDz_tdZ}) to~(\ref{T_5Y28_TDz_O2wip})
in Theorem~\ref{T_5Y28_TDz}
shows that $(\OTT A)^{\bt} = \OTT A^{\af}$ has the weak ideal property.
Theorem~8.9 of~\cite{PsnPh2} now implies that
$\OTT A$ has the weak ideal property.
So $A$ has topological dimension zero
by the implication
from (\ref{T_5Y28_TDz_O2wip}) to~(\ref{T_5Y28_TDz_tdZ})
in Theorem~\ref{T_5Y28_TDz}.
\end{proof}

\section{Permanence properties for tensor products}\label{Sec_PermTP}

In this section,
we consider permanence properties for tensor products.
One of its purposes is to serve as motivation for
the results on $C_0 (X)$-algebras in Section~\ref{Sec_PermBunTD}.
The new positive result is Theorem~\ref{T_5Z21_TPtdz}:
if $A$ and $B$ are separable C*-algebras
and $A$ is exact,
then $A \otimes_{\mathrm{min}} B$
has topological dimension zero
\ifo{} $A$ and $B$ have topological dimension zero.
The exactness hypothesis is necessary
(Example~\ref{E_5Z20_NotExact}).
Still assuming this exactness hypothesis,
we also give partial results for the weak ideal property,
when one of the tensor factors
actually has the ideal property and both are separable
(Theorem~\ref{T_5Z21_TPwip}),
and when one of them has finite or Hausdorff
primitive ideal space
(Proposition~\ref{R_6203_TPS} and Proposition~\ref{C_6201_T2TP}).

The following two easy examples
show that the properties we are considering are certainly not preserved
by taking tensor products with arbitrary C*-algebras.

\begin{exa}\label{E_5Z20_01}
The algebra
$\C$ has all of topological dimension zero,
the ideal property, the weak ideal property,
and residual~(SP),
but $C ([0, 1]) \otimes \C$
has none of these.
\end{exa}

\begin{exa}\label{E_5Z20_01Inf}
The algebra $\OT$ is purely infinite and has the ideal property
but $C ([0, 1]) \otimes \OT$ does not have the ideal property.
\end{exa}

In particular,
there is no hope of any general theorem about tensor products
for properties of the form ``residually hereditarily in~$\CC$''
when only one tensor factor has the property.
Permanence theorems will therefore
have to assume that both factors have the property in question.
The following example shows that we will also need to assume that at
least one tensor factor is exact.

\begin{exa}\label{E_5Z20_NotExact}
We show that there are separable unital C*-algebras $A$ and $C$
(neither of which is exact)
which have topological dimension zero
and such that $A \otimes_{\mathrm{min}} C$
does not have topological dimension zero.
In fact,
$A$ and $C$ even have real rank zero,
and $C$ is simple.
We also show that there are
separable unital C*-algebras $B$ and $D$
which are purely infinite and have the ideal property,
but such that $B \otimes_{\mathrm{min}} D$
does not have the ideal property.
In fact,
$B$ and $D$ even tensorially absorb~$\OT$,
and $D$ is simple.

Since topological dimension zero and the weak ideal property
are preserved by passing to quotients,
it follows that no other tensor product of $A$ and $C$
has topological dimension zero,
and that no other tensor product of $B$ and $D$
even has the weak ideal property.

Let $A$ and $C$ be as in Theorem~2.6 of~\cite{PR0}.
As there, $A$ and $C$ are separable \uca{s}
with real rank zero,
$C$ is simple,
and $A \otimes_{\mathrm{min}} C$ does not have the ideal property.
These are the same algebras $A$ and $C$ as used in the
proof of Proposition~4.5 of~\cite{PR}.
Thus, $A \otimes_{\mathrm{min}} C$
does not have topological dimension zero
by Proposition 4.5(1) of~\cite{PR}.
Also,
$\OTT A \otimes_{\mathrm{min}} C$
does not have the ideal property by Proposition 4.5(2) of~\cite{PR}.
Thus taking
$B = \OTT A$ and $D = \OTT C$
gives algebras $B$ and $D$ with the required properties.
\end{exa}

We have several positive results,
but no answers for several obvious questions.
We recall known results,
then give the new result we can prove
(on topological dimension zero)
and our partial results for the weak ideal property.
We conclude with open questions.

In order to get
\begin{equation}\label{Eq_6201_NewProdPrim}
\Prim (A \otimes_{\mathrm{min}} B)
 \cong \Prim (A) \times \Prim (B).
\end{equation}
we will assume one of the algebras is exact and both are separable.
In Theorem~\ref{T_5Z21_TPtdz},
Theorem~\ref{T_5Z21_TPwip},
and Corollary~\ref{P_5Z26_WIPforTP},
these assumptions can be replaced by any other hypotheses
which imply a natural \hme{}
as in~(\ref{Eq_6201_NewProdPrim}).
Proposition 2.17 of~\cite{BlnKb}
gives a number of conditions
which imply this for the spaces of prime ideals in place of
the primitive ideal spaces,
but for separable C*-algebras this is the same thing.

\begin{thm}[Corollary~1.3 of~\cite{PR0}]\label{T_5Z26_TPIP}
Let $A$ and $B$ be C*-algebras with the ideal property.
Assume that $A$ is exact.
Then $A \otimes_{\mathrm{min}} B$
has the ideal property.
\end{thm}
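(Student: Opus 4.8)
The plan is to reduce the ideal property to a statement about separating \pj{s} and then to manufacture the required \pj{} as an elementary tensor $e \otimes f$, using the ideal property of each factor together with the product description of the primitive ideal space. First I would record the standard reformulation: a C*-algebra has the ideal property \ifo{} whenever $I \subset J$ are ideals with $I \neq J$, there is a \pj{} $p \in J$ with $p \notin I$. Indeed, if this separation property holds and $J$ is any ideal, then the ideal generated by the \pj{s} of $J$ must equal $J$, since a proper containment would yield a \pj{} of $J$ outside it, contradicting that it already contains every \pj{} of $J$. So it suffices to fix ideals $I \subset J \subset A \otimes_{\mathrm{min}} B$ with $I \neq J$ and to produce a \pj{} in $J \setminus I$.

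The structural input is the \hme{}~(\ref{Eq_6201_NewProdPrim}), under which $\Prim (A \otimes_{\mathrm{min}} B) \cong \Prim (A) \times \Prim (B)$ and ideals correspond to open subsets of the product; this is where exactness of~$A$ is used. Under this identification a \pj{} $p$ lies in an ideal $K$ \ifo{} its support $\{ R : p \notin R \}$ is contained in the open set corresponding to~$K$. Since $I \neq J$, the ideal $J / I$ is nonzero, so I can choose a primitive ideal $R \subset A \otimes_{\mathrm{min}} B$ with $I \subset R$ and $J \not\subset R$; write $R = \Ker (\pi_P \otimes \pi_Q)$, where $\pi_P$ and $\pi_Q$ are irreducible representations of $A$ and $B$ with kernels $P$ and $Q$, so that $R$ corresponds to the point $(P, Q)$. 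As $(P, Q)$ lies in the open set attached to~$J$ and the product topology has a base of rectangles, I can find ideals $I_A \subset A$ and $I_B \subset B$, with $I_A \not\subset P$ and $I_B \not\subset Q$, whose product ideal $I_A \otimes_{\mathrm{min}} I_B$ corresponds to a rectangle contained in the open set of~$J$; in particular $I_A \otimes_{\mathrm{min}} I_B \subset J$.

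Now the ideal properties of $A$ and $B$ supply the \pj{s}. By the ideal property of~$A$, the ideal $I_A$ is generated by its \pj{s}; since $I_A \not\subset P$, not all of these can lie in~$P$, so there is a \pj{} $e \in I_A$ with $e \notin P$, and similarly a \pj{} $f \in I_B$ with $f \notin Q$. Then $e \otimes f$ is a \pj{} in $I_A \otimes_{\mathrm{min}} I_B \subset J$. On the other hand $\pi_P (e) \neq 0$ and $\pi_Q (f) \neq 0$ force $(\pi_P \otimes \pi_Q)(e \otimes f) = \pi_P (e) \otimes \pi_Q (f) \neq 0$, so $e \otimes f \notin R$; since $I \subset R$, this gives $e \otimes f \notin I$. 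Thus $e \otimes f$ is the desired \pj{} in $J \setminus I$, and the separation property, hence the ideal property, holds for $A \otimes_{\mathrm{min}} B$.

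The hard part is the structural fact~(\ref{Eq_6201_NewProdPrim}) together with the compatible identification of the product ideals $I_A \otimes_{\mathrm{min}} I_B$ with rectangles in $\Prim (A) \times \Prim (B)$: this is exactly where exactness of~$A$ is indispensable, and it is needed again to know that the ideal $I_A \subset A$ is itself exact so that the product description applies to $I_A \otimes_{\mathrm{min}} I_B$. To avoid the separability assumption present in the derivation of~(\ref{Eq_6201_NewProdPrim}), the clean route is to run the entire argument on the space of prime ideals rather than primitive ideals, invoking Proposition~2.17 of~\cite{BlnKb} for the product decomposition; for separable $A$ and $B$ the two spaces coincide and~(\ref{Eq_6201_NewProdPrim}) applies directly.
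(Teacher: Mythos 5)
Your argument is correct, but there is nothing in the paper to compare it against: the result is stated as Theorem~\ref{T_5Z26_TPIP} only as a quotation of Corollary~1.3 of~\cite{PR0}, with no proof supplied. Your route --- reformulate the ideal property as separation of nested ideals by a projection, pick a primitive ideal $R = \Ker(\pi_P \otimes \pi_Q)$ containing $I$ but not $J$, shrink to a basic rectangle $U \times V$ inside the open set of~$J$, and exhibit $e \otimes f$ with $e \in I_A(U) \setminus P$ and $f \in I_B(V) \setminus Q$ --- is sound, and it leans on exactly the structural input the paper itself uses for the neighboring Theorems~\ref{T_5Z21_TPtdz} and~\ref{T_5Z21_TPwip}, namely the Blanchard--Kirchberg identification of $\Prim(A \otimes_{\mathrm{min}} B)$ (or better, the prime ideal space) with the product, together with the identification $I_A(U) \otimes_{\mathrm{min}} I_B(V) \cong I(U \times V)$. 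The proof in~\cite{PR0} instead derives the corollary from the theorem that, for $A$ exact, every closed ideal $K$ of $A \otimes_{\mathrm{min}} B$ is the closed linear span of the product ideals $I \otimes_{\mathrm{min}} J$ it contains; the corollary then follows by generating each such $I$ and $J$ by projections and observing that the elementary tensors $e \otimes f$ generate $I \otimes_{\mathrm{min}} J$ as an ideal. The two structural facts are essentially equivalent, so your proof is a topological, ``pointwise'' rephrasing of the same mechanism rather than a genuinely independent argument; its one real advantage is that the separation-by-a-single-projection format is exactly what is reused in the proof of Theorem~\ref{T_5Z21_TPwip}. You were also right to flag that the paper's derivation of~(\ref{Eq_6201_NewProdPrim}) assumes separability while the theorem does not, and that passing to closed prime ideals via Proposition~2.17 of~\cite{BlnKb} removes that assumption; for the final step one only needs that $e \otimes f$ has nonzero image in $(A/P) \otimes_{\mathrm{min}} (B/Q)$, which holds for prime (not necessarily primitive) $P$ and $Q$.
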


\begin{thm}[Proposition~4.6 of~\cite{PR}]\label{T_5Z26_TPPIIP}
Let $A$ and $B$ be C*-algebras with the ideal property.
Assume that $B$ is purely infinite and $A$ is exact.
Then $A \otimes_{\mathrm{min}} B$
is purely infinite and has the ideal property.
\end{thm}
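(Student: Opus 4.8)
The conclusion has two halves, and I would obtain both at once by proving that $A \otimes_{\mathrm{min}} B$ is residually hereditarily (Definition~\ref{D_5X30_ResHerC}) in the class $\CC$ of all C*-algebras containing an infinite \pj. Indeed, by the equivalence recorded in~(\ref{Item_5X31_PIIP}) (Proposition~2.11 of~\cite{PR}), a C*-algebra is purely infinite and has the ideal property \ifo{} it is residually hereditarily in~$\CC$, so establishing the latter yields the whole theorem simultaneously. (That $A \otimes_{\mathrm{min}} B$ has the ideal property is in any case already Theorem~\ref{T_5Z26_TPIP}, so the genuinely new content is pure infiniteness.) Thus the plan is: given an ideal $J \subsetneqq A \otimes_{\mathrm{min}} B$ and a nonzero \hsa{} $C$ of the quotient $Q = (A \otimes_{\mathrm{min}} B)/J$, produce an infinite \pj{} in~$C$.

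The building blocks are easy to assemble. If $\ov{A}$ is any nonzero quotient of~$A$, then $\ov{A}$ has the ideal property (this passes to quotients, since the images of the \pj{s} generating an ideal of $A$ still generate its image), so $\ov{A}$ contains a \nzp{} $e$. If $\ov{B}$ is any nonzero quotient of~$B$, then $\ov{B}$ is again purely infinite (pure infiniteness passes to quotients, by~\cite{KR}) and has the ideal property, so it contains a \nzp, and in a purely infinite C*-algebra every \nzp{} is properly infinite; choose such a properly infinite $f \in \ov{B}$, witnessed by partial isometries $s_1, s_2 \in \ov{B}$ with $s_i^* s_i = f$, $s_1 s_1^* + s_2 s_2^* \le f$, and $s_1 s_1^* \perp s_2 s_2^*$. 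Then $e \otimes s_1$ and $e \otimes s_2$ witness that $e \otimes f$ is a properly infinite, hence infinite, \pj{} in $\ov{A} \otimes_{\mathrm{min}} \ov{B}$. Consequently, any nonzero \hsa{} that contains a copy of such a tensor product contains an infinite \pj.

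It remains to locate such a piece inside the arbitrary \hsa{} $C \subset Q$, and this is the main obstacle. The quotients of $A \otimes_{\mathrm{min}} B$ are not literally tensor products of quotients of $A$ and of~$B$, so the reduction to the previous paragraph must be performed by hand, and this is exactly where exactness of~$A$ enters. It guarantees $\Prim(A \otimes_{\mathrm{min}} B) \cong \Prim(A) \times \Prim(B)$ as in~(\ref{Eq_6201_NewProdPrim}) and, more to the point, that the ideal lattice of $A \otimes_{\mathrm{min}} B$ is governed by those of the two factors; so a suitable nonzero corner of $Q$, or of a further quotient of~$Q$, can be matched with a tensor product $\ov{A} \otimes_{\mathrm{min}} \ov{B}$ of quotients. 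I would carry out this localization so as to transport a \pj{} of the form $e \otimes f$ into~$C$, and then use the Cuntz-comparison theory of purely infinite C*-algebras from~\cite{KR} to conclude that the resulting infinite \pj{} lies in~$C$ up to Murray--von Neumann equivalence.

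The difficulty here is genuine rather than bookkeeping: pure infiniteness is not known to be preserved under minimal tensor products in general, so there is no purely formal route, and the argument must exploit both the abundance of \pj{s} supplied by the ideal property of the two factors and the product structure of the primitive ideal space supplied by exactness. Handling an arbitrary \hsa{} of an arbitrary quotient, rather than just the algebra itself, is the step I expect to require the most care, and is the place where the hypotheses ``$A$ has the ideal property'' and ``$A$ is exact'' are both indispensable.
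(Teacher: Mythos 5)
The paper itself offers no proof of this statement: it is quoted as Proposition~4.6 of~\cite{PR}, with the ideal-property half separately recorded as Theorem~\ref{T_5Z26_TPIP}. So your proposal has to stand on its own, and it does not. Your first two paragraphs are sound: reducing the whole theorem to showing that $A \otimes_{\mathrm{min}} B$ is residually hereditarily in the class $\CC$ of C*-algebras with an infinite projection (via the equivalence of conditions (ii) and~(iv) of Proposition~2.11 of~\cite{PR}) is the right formulation, and the verification that $e \otimes f$ is properly infinite in $\ov{A} \otimes_{\mathrm{min}} \ov{B}$ is correct. But the third paragraph, which you yourself flag as ``the main obstacle,'' is not an argument --- it is a declaration that an argument should exist. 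Producing an infinite projection inside an arbitrary nonzero hereditary subalgebra $C$ of an arbitrary quotient $Q = (A \otimes_{\mathrm{min}} B)/J$ is the entire content of the theorem, and nothing you write bridges that gap.

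Three concrete problems with the sketched localization. First, even granting $\Prim(A \otimes_{\mathrm{min}} B) \cong \Prim(A) \times \Prim(B)$, the ideal $J$ corresponds to an arbitrary open subset of the product space, so neither $Q$ nor any corner of $Q$ is a tensor product of quotients; to reach something of the form $\ov{A} \otimes_{\mathrm{min}} \ov{B}$ you must pass to a further quotient of $Q$, and a projection found there does not lift back into $C$ (projection lifting is exactly the kind of step that fails in general and that Section~\ref{Sec_PermBunTD} of this paper has to buy with total disconnectedness and semiprojectivity). Second, your closing appeal to ``the Cuntz-comparison theory of purely infinite C*-algebras'' to move the projection into $C$ is circular: the ambient algebra is not yet known to be purely infinite, which is precisely what is being proved. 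Third, the homeomorphism~(\ref{Eq_6201_NewProdPrim}) is established in this paper only for separable algebras (prime $=$ primitive via Proposition~4.3.6 of~\cite{Pd1}), whereas the statement carries no separability hypothesis; at a minimum you would have to work with the lattice of closed prime ideals from~\cite{BlnKb} instead. As it stands, the proposal reproduces the easy half of the argument and defers the hard half to the cited source.
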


\begin{thm}\label{T_5Z21_TPtdz}
Let $A$ and $B$ be separable C*-algebras.
Assume that $A$ is exact.
Then $A \otimes_{\mathrm{min}} B$
has topological dimension zero
\ifo{} both $A$ and $B$ have topological dimension zero.
\end{thm}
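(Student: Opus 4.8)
The plan is to reduce the statement to a purely topological fact about products of spaces. The key input is the homeomorphism
\[
\Prim (A \otimes_{\mathrm{min}} B) \cong \Prim (A) \times \Prim (B)
\]
of~(\ref{Eq_6201_NewProdPrim}), which holds precisely because $A$ is exact and both algebras are separable. Recall (from \Def{D_5Y27_TDZero311}) that a C*-algebra has topological dimension zero \ifo{} its primitive ideal space has a base consisting of compact (but not necessarily closed) open sets. So, writing $X = \Prim (A)$ and $Y = \Prim (B)$, the theorem becomes the assertion that $X \times Y$ has such a base \ifo{} both $X$ and $Y$ do. I would dispose of the degenerate case in which one factor is empty (one algebra is zero) separately, and otherwise assume $X$ and $Y$ nonempty. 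Note that $X \times Y$ is locally compact, being homeomorphic to the primitive ideal space of $A \otimes_{\mathrm{min}} B$, so the standing local compactness hypothesis in \Def{D_5Y27_TDZero311} is automatic.

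For the ``if'' direction, suppose $X$ and $Y$ each have a base of compact open sets, say $\mathcal{B}_X$ and $\mathcal{B}_Y$. Then $\{ U \times V : U \in \mathcal{B}_X, \ V \in \mathcal{B}_Y \}$ is a base for the product topology; each $U \times V$ is open, and it is compact because the product of two compact spaces is compact (no Hausdorff hypothesis is needed here). Hence $X \times Y$ has a base of compact open sets, which is the desired conclusion.

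For the ``only if'' direction, the coordinate projection $\pi_X \colon X \times Y \to X$ is continuous, and it is an \emph{open} map: the image of a basic open set $U \times V$ with $V \neq \varnothing$ is $U$, so images of open sets are open. Given $x \in X$ and an open set $U$ with $x \in U$, choose any $y \in Y$; then $(x, y) \in U \times Y$, which is open in $X \times Y$. Using topological dimension zero of $X \times Y$, find a compact open set $W$ with $(x, y) \in W \subset U \times Y$. Then $\pi_X (W)$ is open (image of an open set under an open map), compact (continuous image of a compact set), contains $x$, and satisfies $\pi_X (W) \subset \pi_X (U \times Y) = U$. This produces the required compact open neighborhood, so $X$ has topological dimension zero, and by symmetry so does $Y$.

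The genuine content of the theorem is concentrated in the primitive ideal space identification~(\ref{Eq_6201_NewProdPrim}) and the fact that it requires exactness together with separability; the topological lemma itself is elementary. Thus the main point is to invoke the correct primitive-ideal-space description and to be careful in the non-Hausdorff setting---that compactness does not entail closedness, that finite products of compact sets remain compact without any separation axiom, and that coordinate projections remain open. I would also verify the degenerate zero-algebra case so that the biconditional is stated cleanly.
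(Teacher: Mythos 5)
Your proof is correct, and your ``only if'' direction takes a genuinely different and more elementary route than the paper's. Both arguments begin identically, with the identification $\Prim (A \otimes_{\mathrm{min}} B) \cong \Prim (A) \times \Prim (B)$ from~\cite{BlnKb} (valid by exactness plus separability, since for separable C*-algebras prime ideals are primitive), and the ``if'' direction is the same product-of-compact-open-sets argument in both. For the converse, the paper descends to quotients: it picks a maximal ideal $I \subset A$, invokes the fact that exactness passes to quotients in order to apply the product formula to $(A/I) \otimes_{\mathrm{min}} B$, identifies $\Prim (B)$ with the closed subset $\{ I \} \times \Prim (B)$ of the product, and finishes with the lemma that closed subspaces inherit topological dimension zero. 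You instead observe that the coordinate projections are continuous open maps, so the image of a compact open neighborhood of $(x, y)$ contained in $U \times Y$ is a compact open neighborhood of $x$ contained in $U$; this avoids both the quotient machinery and the closed-subspace lemma, at no cost, and treats the two factors symmetrically. One caveat you rightly flag: if one factor is the zero algebra, the biconditional as literally stated fails (the tensor product is zero, hence vacuously of topological dimension zero, while the other factor is arbitrary), so your nondegeneracy assumption is genuinely needed; the paper's proof silently assumes it as well, since a maximal ideal of $A$ exists only when $A \neq 0$.
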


\begin{proof}
By Proposition 2.17 of~\cite{BlnKb}
(see Remark 2.11 of~\cite{BlnKb} for the notation
in Proposition 2.16 of~\cite{BlnKb},
to which it refers),
the spaces of closed prime ideals satisfy
\[
{\operatorname{prime}} (A \otimes_{\mathrm{min}} B)
 \cong {\operatorname{prime}} (A) \times {\operatorname{prime}} (B),
\]
with the homeomorphism being implemented in the obvious way.
(See Proposition 2.16(iii) of~\cite{BlnKb}.)
Since $A$, $B$, and $A \otimes_{\mathrm{min}} B$ are all
separable,
Proposition 4.3.6 of~\cite{Pd1}
implies that prime ideals are primitive;
the reverse is well known.
So
\begin{equation}\label{Eq_5Z24_ProdPrim}
\Prim (A \otimes_{\mathrm{min}} B)
 \cong \Prim (A) \times \Prim (B).
\end{equation}

Assume $A$ and $B$ have topological dimension zero.
Then (see Definition~\ref{D_5Y27_TDZero311})
we need to prove that if $X$ and $Y$
are locally compact but not necessarily Hausdorff
spaces which have topological dimension zero,
then $X \times Y$ has topological dimension zero.
So let $(x, y) \in X \times Y$,
and let $W \subset X \times Y$
be an open set with $(x, y) \in W$.
By the definition of the product topology,
there are open subsets $U_0 \subset X$ and $V_0 \subset Y$
such that $x \in U_0$ and $y \in V_0$.
By the definition of topological dimension zero,
there are compact open (but not necessarily closed)
subsets $U_0 \subset X$ and $V_0 \subset Y$
such that $x \in U \subset U_0$ and $y \in V \subset V_0$.
Then $U \times V$ is a compact open subset of $X \times Y$
such that $(x, y) \in U \times V \subset W$.

Now assume $A \otimes_{\mathrm{min}} B$ has topological dimension zero.
We prove that $B$ has topological dimension zero;
the proof that $A$ has topological dimension zero is the same,
except that we don't need to know that exactness passes to quotients.
Choose a maximal ideal $I \subset A$.
Then $A / I$ is also exact,
by Proposition 7.1(ii) of~\cite{Kr2}.
Apply~(\ref{Eq_5Z24_ProdPrim})
as is and also with $A / I$ in place of~$A$,
use the formula for the \hme{}
from Proposition 2.16(iii) of~\cite{BlnKb},
and use the quotient map $A \to A/I$
and the map
$\Prim ( (A/I) \otimes_{\mathrm{min}} B)
 \to \Prim (A \otimes_{\mathrm{min}} B)$
it induces.
The outcome is that
\begin{align*}
\Prim (B)
& \cong \Prim ( (A/I) \otimes_{\mathrm{min}} B)
  \cong \{ I \} \times \Prim (B)
\\
& \subset \Prim (A) \times \Prim (B)
 \cong \Prim (A \otimes_{\mathrm{min}} B),
\end{align*}
and is a closed subset.
Combining Lemmas 3.6 and 3.8 of~\cite{PsnPh1},
we see that closed subsets of spaces with topological dimension zero
also have topological dimension zero.
\end{proof}

The first result for the weak ideal property requires some preparation.

\begin{ntn}\label{N_6104_Prim}
Let $A$ be a C*-algebra.
For an open set $U \subset \Prim (A)$,
we let $I_A (U) \subset A$ be the corresponding ideal.
Thus
\[
\Prim (I_A (U)) \cong U
\andeqn
\Prim (A / I_A (U)) \cong \Prim (A) \setminus U.
\]
\end{ntn}

\begin{lem}\label{L_6104_Nbhd}
Let $A$ be a C*-algebra,
let $U \subset \Prim (A)$ be open,
and let $p \in A / I_A (U)$
be a \pj.
Then there exist an open subset $V \subset \Prim (A)$,
a compact (but not necessarily closed) subset $L \subset \Prim (A)$,
and a \pj{} $q \in A / I_A (V)$,
such that
$V \subset L \subset U$
and the image of $q$ in $A / I_A (U)$ is equal to~$p$.
\end{lem}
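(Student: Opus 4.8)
The plan is to lift the projection~$p$ from the quotient $A / I_A (U)$ to something living over a \emph{smaller} open set, using that $U$, as an open subset of $\Prim (A)$, can be approximated from inside by sets whose corresponding ideals are finitely generated (hence have compact primitive ideal space, by \Lem{L_5Y27_GetCptOpen}). The key observation is that the ideal $I_A (U)$ is the closure of the union of the increasing net of ideals $I_A (W)$, where $W$ ranges over open subsets of $U$ of the form $\Prim (J)$ for finitely-generated-by-projections ideals $J$; more directly, one writes $I_A (U)$ as a direct limit of its subideals and uses that a projection in the quotient $A / I_A (U)$ can be lifted approximately.

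First I would set up the lifting. The quotient map $\pi \colon A \to A / I_A (U)$ is surjective, so I would choose a self-adjoint lift $a \in A$ of~$p$ with $\| a \| \le 1$. Since $p$ is a projection, $\pi (a^2 - a) = 0$, so $a^2 - a \in I_A (U)$. Now $I_A (U)$ is the closed union of the ideals $I_A (V)$ as $V$ ranges over open subsets of~$U$ that are contained in a compact open (or merely compact) subset $L \subset U$; because $\Prim (A)$ has the local compactness of Definition~\ref{D_5Y27_TDZero311} is \emph{not} assumed here, I would instead use that $I_A (U)$ is the increasing union of ideals of the form $I_A (V)$ with $\Prim (V) = V$ relatively compact in~$U$ — concretely, any element of $I_A (U)$ can be approximated by elements of such a subideal. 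So I would find an open $V_0 \subset \Prim (A)$ with $\ov{V_0}$-type containment $V_0 \subset L \subset U$ for some compact $L$, such that $\dist (a^2 - a, \, I_A (V_0))$ is as small as desired.

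Next I would pass to the quotient $A / I_A (V_0)$ and apply the standard functional-calculus argument already used in the proof of \Lem{L_5Y27_GetCptOpen}: there is a universal $\ep > 0$ so that if the image $b$ of $a$ in $A / I_A (V_0)$ satisfies $\| b^2 - b \| < \ep$, then there is a genuine projection $q \in A / I_A (V_0)$ with $\| q - b \| < 1$, obtained by functional calculus (cutting the spectrum of the near-idempotent $b$ away from $\tfrac12$). Choosing $V_0$ above so that $\| b^2 - b \| < \ep$, I get the desired $q \in A / I_A (V_0)$; set $V = V_0$. The containment $V \subset L \subset U$ comes from the choice of~$V_0$. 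Finally I must check that the image of $q$ in $A / I_A (U)$ equals~$p$: since $\| q - b \| < 1$ and both $q$ and (the further image of) $b = \pi_{V}(a)$ map to projections in $A / I_A (U)$ — namely to $\pi(q)$ and to $p$ respectively, with $\| \pi(q) - p \| < 1$ — two projections at distance $< 1$ are equal once we know they are homotopic, but here more simply the functional calculus commutes with the quotient map, so the image of $q$ is exactly $\chi_{[1/2, \infty)}(p) = p$.

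The main obstacle is the first step: producing the open set $V$ with $V \subset L \subset U$ for a \emph{compact} $L$, since topological dimension zero (which would give compact \emph{open} such sets) is not hypothesized here — this lemma is presumably a stepping stone toward establishing it. The resolution is that one only needs $I_A (U)$ to be the closed union of ideals $I_A (V)$ with $\Prim (V)$ relatively compact in~$U$, which follows from general structure theory of the ideal lattice (each element of $I_A(U)$ lies in a subideal generated by finitely many of its coordinates, whose spectrum is compact in~$U$), rather than from any zero-dimensionality assumption. Once that approximation is in hand, the rest is the routine functional calculus and the compatibility of functional calculus with quotient maps.
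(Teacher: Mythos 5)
Your plan is correct and is essentially the paper's own argument: lift $p$ to a self-adjoint $a$, note $a^2 - a \in I_A(U)$, pass to a quotient $A/I_A(V)$ in which $a^2-a$ has small norm, and finish by functional calculus, which commutes with the further quotient to $A/I_A(U)$. The approximation fact you invoke (that $I_A(U)$ is the closed directed union of ideals $I_A(V)$ with $V \subset L \subset U$, $L$ compact) is exactly what the paper makes explicit by taking $V = \{P : \|\pi_P(a^2-a)\| > \tfrac18\}$ and $L = \{P : \|\pi_P(a^2-a)\| \geq \tfrac18\}$ and citing Propositions 3.3.2 and 3.3.7 and Lemma 3.3.6 of Dixmier --- note that your parenthetical ``each element of $I_A(U)$ lies in a subideal \dots whose spectrum is compact in $U$'' is not literally true (only the approximation version is), so the Dixmier level-set argument is the justification you actually need there.
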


\begin{proof}
For $P \in \Prim (A)$ let $\pi_P \colon A \to A / P$
be the quotient map,
and for an open subset $W \subset \Prim (A)$
let $\kp_W \colon A \to A / I_A (W)$
be the quotient map.
Choose $a \in A_{\mathrm{sa}}$ such that
$\kp_{U} (a) = p$.
Define
\[
V = \big\{ P \in \Prim (A) \colon
   \| \pi_P (a^2 - a) \| > \tfrac{1}{8} \big\}
\]
and
\[
L = \big\{ P \in \Prim (A) \colon
   \| \pi_P (a^2 - a) \| \geq \tfrac{1}{8} \big\}.
\]
We apply results in~\cite{Dx} to these sets.
These results are actually stated in terms of
functions on the space ${\widehat{A}}$
of unitary equivalence classes of irreducible representations
of~$A$,
with the topology being the inverse image of
the topology on $\Prim (A)$
under the standard surjection
${\widehat{A}} \to \Prim (A)$,
but they clearly apply to $\Prim (A)$.
It follows from Proposition 3.3.2 of~\cite{Dx} that $V$ is open,
and from Proposition 3.3.7 of~\cite{Dx}
that $L$ is compact.
Obviously $V \subset L$.
Clearly $\pi_P (a^2 - a) = 0$ for all $P \in \Prim (A) \setminus U$,
so $L \subset U$.

Lemma 3.3.6 of~\cite{Dx} implies that
$\| \kp_V (a^2 - a) \| \leq \tfrac{1}{8}$.
Therefore $\tfrac{1}{2} \not\in \spec ( \kp_V (a))$.
Thus we can define a \pj{} $q \in A / I_A (V)$
by $q = \ch_{(\frac{1}{2}, \I)} (\kp_V (a))$.
The image of $q$ in $A / I_A (U)$ is clearly equal to~$p$.
\end{proof}

\begin{lem}\label{L_6201_CptProd}
Let $X_1$ and $X_2$ be topological spaces,
let $W \subset X_1 \times X_2$ be an open subset,
let $x \in X_1$,
let $L \subset X_2$ be compact,
and suppose that $\{ x \} \times L \subset W$.
Then there exists an open set $U \subset X_1$
such that $x \in U$ and $U \times L \subset W$.
\end{lem}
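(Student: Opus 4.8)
The statement to prove is the classical "tube lemma" from point-set topology: given $\{x\} \times L \subset W$ with $L$ compact, one can fatten the point $x$ to an open set $U$ so that the whole tube $U \times L$ stays inside $W$.

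The plan is to cover the compact slice $\{x\} \times L$ by basic open boxes drawn from $W$, extract a finite subcover using compactness of $L$, and then intersect the $X_1$-factors of these boxes to obtain the desired neighborhood $U$ of $x$. First I would note that for each point $y \in L$, the pair $(x, y)$ lies in the open set $W$, so by the definition of the product topology there are open sets $U_y \subset X_1$ and $V_y \subset X_2$ with $x \in U_y$, $y \in V_y$, and $U_y \times V_y \subset W$. The family $\{ V_y \}_{y \in L}$ is then an open cover of the compact set $L$, so there are finitely many points $y_1, y_2, \ldots, y_n \in L$ with $L \subset \bigcup_{j=1}^{n} V_{y_j}$.

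Next I would set $U = \bigcap_{j=1}^{n} U_{y_j}$, which is a finite intersection of open neighborhoods of $x$ and hence an open set containing $x$. To check that $U \times L \subset W$, take any $(u, \ell) \in U \times L$. Since $\ell \in L$, there is some index $j$ with $\ell \in V_{y_j}$, and since $u \in U \subset U_{y_j}$, we get $(u, \ell) \in U_{y_j} \times V_{y_j} \subset W$. This verifies the containment and completes the argument.

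There is no real obstacle here: the only ingredient beyond the definition of the product topology is the compactness of $L$, used to pass from the (possibly infinite) cover $\{V_y\}$ to a finite subcover, and it is exactly finiteness that makes $\bigcap_j U_{y_j}$ open. I would emphasize that no separation or Hausdorff hypothesis on either factor is needed, which matters because in this paper the relevant spaces are primitive ideal spaces that are generally non-Hausdorff; the argument is purely about open sets and finite intersections and so goes through verbatim in the non-Hausdorff setting.
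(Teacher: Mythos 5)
Your proof is correct and is essentially identical to the paper's: both cover $\{x\}\times L$ by basic open boxes inside $W$, use compactness of $L$ to extract a finite subcover of the second factors, and take $U$ to be the finite intersection of the corresponding first factors. Your added remark that no Hausdorff hypothesis is needed is accurate and matches the paper's own note that $L$ need not be closed.
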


We don't assume that $X_1$ and $X_2$ are Hausdorff.
In particular, $L$ need not be closed.

\begin{proof}[Proof of Lemma~\ref{L_6201_CptProd}]
For each $y \in L$,
choose open sets $V_1 (y) \subset X_1$ and $V_2 (y) \subset X_2$
such that $(x, y) \in V_1 (y) \times V_2 (y) \subset W$.
Use compactness of $L$
to choose $n \in \Nz$ and $y_1, y_2, \ldots, y_n \in L$
such that $V_2 (y_1), \, V_2 (y_2), \, \ldots, \, V_2 (y_n)$
cover~$L$.
Take $U = \bigcap_{j = 1}^n V_1 (y_j)$.
\end{proof}

\begin{thm}\label{T_5Z21_TPwip}
Let $A_1$ and $A_2$ be separable C*-algebras.
Assume that $A_1$ or $A_2$ is exact,
that $A_1$ has the ideal property,
and that $A_2$ has the weak ideal property.
Then $A_1 \otimes_{\mathrm{min}} A_2$
has the weak ideal property.
\end{thm}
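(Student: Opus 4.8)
The plan is to reduce everything to a statement about primitive ideal spaces and the weak ideal property, using the structure theory already developed. First I would establish the key homeomorphism. Since $A_1$ or $A_2$ is exact and both are separable, I can invoke the identification
\[
\Prim (A_1 \otimes_{\mathrm{min}} A_2)
 \cong \Prim (A_1) \times \Prim (A_2),
\]
exactly as in the proof of Theorem~\ref{T_5Z21_TPtdz} (via Proposition 2.17 of~\cite{BlnKb} and the fact that prime and primitive ideals coincide in the separable case). Because the weak ideal property is characterized (condition~(\ref{Item_5X31_WIP}) in the introduction) as being residually hereditarily in the class $\CC$ of C*-algebras $B$ with a nonzero projection in $K \otimes B$, it suffices to show that every nonzero subquotient of $A_1 \otimes_{\mathrm{min}} A_2$ has a nonzero projection after tensoring with $K$. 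Concretely, given ideals $I \subset J \subset K \otimes (A_1 \otimes_{\mathrm{min}} A_2)$ with $I \neq J$, I must produce a nonzero projection in $J/I$.

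Next I would exploit the asymmetry between the two hypotheses. The factor $A_1$ has the \emph{ideal} property, which is strong: every ideal is generated by its projections, and the projections live over compact open subsets of $\Prim (A_1)$ by Lemma~\ref{L_5Y27_GetCptOpen}. The factor $A_2$ only has the \emph{weak} ideal property, so projections appear only after tensoring with $K$ and only in subquotients. The strategy is to use the product structure of the primitive ideal space to localize: a nonzero subquotient corresponds to a difference of open sets $W_2 \setminus W_1$ in the product space, and I would find a point $(P_1, P_2)$ in this difference. Using the ideal property of $A_1$, I can choose a genuine projection $p_1$ in a suitable subquotient of $A_1$ supported near $P_1$ over a compact open set, and using the weak ideal property of $A_2$ (applied to the fiber subquotient over $P_2$), I obtain a projection $q_2$ in $K \otimes (\text{subquotient of } A_2)$. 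The tensor product $p_1 \otimes q_2$ should then furnish the required nonzero projection in $K \otimes (J/I)$, after transporting through the homeomorphism.

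The main technical work, and the step I expect to be the principal obstacle, is matching up the ideal structure of the tensor product with the product of the two primitive ideal spaces at the level of \emph{projections}, not just open sets. The point is that $J/I$ is a subquotient of a tensor product, and I need to realize a projection inside it that factors as an elementary tensor up to the relevant ideals. For this I would use Lemma~\ref{L_6104_Nbhd} to replace an abstract projection in a quotient $A_2 / I_{A_2}(U_2)$ by one defined over a compact (not necessarily closed) subset $L_2 \subset U_2$, and then apply Lemma~\ref{L_6201_CptProd} to thicken $\{P_1\} \times L_2$ to a product neighborhood $U_1 \times L_2$ sitting inside the open set corresponding to $J$. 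The ideal property of $A_1$ supplies an honest projection over a compact open refinement of $U_1$, so that the elementary tensor is a projection in the appropriate quotient and its class survives into $J/I$. The delicate part is checking that the projection so constructed is genuinely nonzero in $J/I$ (rather than dying into $I$), which amounts to verifying that the chosen point $(P_1,P_2)$ really lies in the support of the subquotient; this is where the compact-open bookkeeping from Lemmas~\ref{L_6104_Nbhd} and~\ref{L_6201_CptProd} must be carried out with care, and where the exactness hypothesis is silently used to guarantee that the relevant ideals of the tensor product are themselves tensor-product ideals.
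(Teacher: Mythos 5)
Your outline is recognizably the paper's own strategy: the identification $\Prim (A_1 \otimes_{\mathrm{min}} A_2) \cong \Prim (A_1) \times \Prim (A_2)$, the reduction to producing a nonzero projection in the subquotient attached to a difference $W_2 \setminus W_1$ of open sets, the use of Lemma~\ref{L_6104_Nbhd} to replace a projection of a quotient of $A_2$ by one defined over a compact set $L$, the use of Lemma~\ref{L_6201_CptProd} to thicken $\{ P_1 \} \times L$ to a product neighborhood, and the elementary tensor $p_1 \otimes p_2$ as the candidate projection are all exactly the steps of the printed proof.

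The genuine gap is the step you yourself flag as delicate: showing that the class of $p_1 \otimes p_2$ in $J / I$ is nonzero. Your proposed resolution --- ``verifying that the chosen point $(P_1, P_2)$ really lies in the support of the subquotient'' --- does not suffice. The point $(P_1, P_2)$ lies in $W_2 \setminus W_1$ by construction, but the projection supplied by the weak ideal property of $A_2$ is only known to be nonzero in the subquotient $I_{A_2} (V_2) / I_{A_2} (S_2)$, where $S_2 = \{ y \in V_2 \colon (P_1, y) \in W_1 \}$; nothing guarantees that it survives evaluation at the particular fiber over $P_2$, so one cannot conclude by evaluating the elementary tensor at $(P_1, P_2)$. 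The paper's mechanism is different: it also shrinks on the \emph{first} factor, setting $S_1 = V_1 \cap \big( X_1 \setminus \overline{ \{ P_1 \} } \big)$ and using the ideal property of $A_1$ to produce $p_1 \in I_{A_1} (V_1)$ with nonzero image $e_1$ in $I_{A_1} (V_1) / I_{A_1} (S_1)$; it then sets $R = (V_1 \times S_2) \cup (S_1 \times V_2)$ and proves the topological claim $W_1 \cap \big[ (V_1 \setminus S_1) \times (V_2 \setminus S_2) \big] = \E$, using $V_1 \setminus S_1 \subset \overline{ \{ P_1 \} }$ and openness of $W_1$. This claim gives $(W_1 \cup R) \cap (V_1 \times V_2) = R$, which is what makes the induced map from $[I_{A_1} (V_1) / I_{A_1} (S_1)] \otimes_{\mathrm{min}} [I_{A_2} (V_2) / I_{A_2} (S_2)]$ into the quotient of $J$ by the ideal of $W_1 \cup R$ injective; since $e_1 \otimes e_2 \neq 0$ there, the class of $p_1 \otimes p_2$ in $J / I$ is nonzero. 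Without the choice of $S_1$ and this disjointness argument your construction has no way to rule out that $p_1 \otimes p_2$ dies in $I$. (Note also that Lemma~\ref{L_6201_CptProd} must be applied with the \emph{smaller} open set $W_1$, so that $V_1 \times L \subset W_1$: this containment is what makes the elementary tensor well defined modulo $I$ in the first place, whereas containment in the set corresponding to $J$, as you wrote, accomplishes nothing.)
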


In the diagram~(\ref{Eq_6202_Diagram}) in the proof below,
one should think of the subquotients as corresponding
to locally closed subsets of $\Prim (A_1) \times \Prim (A_2)$.
Thus,
the algebra in the middle of the top row
corresponds to $V_1 \times (V_2 \setminus T)$.
It contains a useful nonzero projection,
obtained as the tensor product of suitable
\pj{s} in $I_{A_1} (V_1)$ and $I_{A_2} (V_2) / I_{A_2} (T)$.
This subset isn't open,
so the algebra isn't a subalgebra of $A_1 \otimes_{\mathrm{min}} A_2$.
A main point in the proof is that,
given~$V_2$ and a nonzero \pj{}
$e_2 \in I_{A_2} (V_2) / I_{A_2} (S_2)$
(see~(\ref{Eq_6202_S2}) for the definition of $S_2$),
the sets $V_1$ and $T$ have been chosen so that there is
a \pj{} $p_2 \in I_{A_2} (V_2) / I_{A_2} (T)$
whose image is~$e_2$,
and so that the set $Y \cup [V_1 \times (V_2 \setminus T)]$
{\emph{is}} open.

We do not get a proof the the tensor product of two
algebras with the weak ideal property
again has the weak ideal property,
because we do not know how to reduce the size of~$V_2$
(to go with an analogous subset $T_1 \subset V_1$)
without changing the \pj~$e_2$.

\begin{proof}[Proof of Theorem~\ref{T_5Z21_TPwip}]
Replacing $A_2$ by $K \otimes A_2$,
we may assume that whenever
$I, J \subset A_2$ are ideals such that $I \subsetneqq J$,
then $J / I$ contains a nonzero \pj.

Define $X_j = \Prim (A_j)$ for $j = 1, 2$.
Using~\cite{BlnKb}
in the same way as in the proof of Theorem~\ref{T_5Z21_TPtdz},
we identify
\[
\Prim (A_1 \otimes_{\mathrm{min}} A_2) = X_1 \times X_2.
\]
The identification is
given by the map
from $X_1 \times X_2$ to $\Prim (A_1 \otimes_{\mathrm{min}} A_2)$
sending
$(P_1, P_2) \in X_1 \times X_2$ to the primitive ideal
obtained as the kernel of
$A_1 \otimes_{\mathrm{min}} A_2
 \to (A_1 / P_1) \otimes_{\mathrm{min}} (A_2 / P_2)$.
The lattice of ideals of $A_1 \otimes_{\mathrm{min}} A_2$
can thus be canonically identified with the lattice of open subsets
of $X_1 \times X_2$,
when this space is equipped with the product topology.
We simplify Notation~\ref{N_6104_Prim}
by writing $I_j (U)$ for $I_{A_j} (U)$
when $U \subset X_j$ is open,
and $I (W)$ for $I_{A_1 \otimes_{\mathrm{min}} A_2} (W)$
when $W \subset X_1 \times X_2$ is open.
We then get canonical isomorphisms
$I_1 (U_1) \otimes_{\mathrm{min}} I_2 (U_2) \cong I (U_1 \times U_2)$
for open subsets $U_1 \subset X_1$ and $U_2 \subset X_2$.

We need to show that if $Y, Z \subset X_1 \times X_2$
are open subsets such that $Y \subsetneqq Z$,
then $I (Z) / I (Y)$
contains a nonzero \pj.

Choose $x_1 \in X_1$ and $x_2 \in X_2$
such that $(x_1, x_2) \in Z \setminus Y$.
Choose open sets $U \subset X_1$ and $V_2 \subset X_2$
such that
\[
x_1 \in U,
\,\,\,\,\,\,
x_2 \in V_2,
\andeqn
U \times V_2 \subset Z.
\]
Define
\begin{equation}\label{Eq_6202_S2}
S_2 = \big\{ y \in V_2 \colon (x_1, y) \in Y \big\},
\end{equation}
which is an open subset of~$V_2$.
Since $A_2$ has the weak ideal property,
there is a \nzp{} $e_2 \in I_2 (V_2) / I_2 (S_2)$.
Use Lemma~\ref{L_6104_Nbhd}
to choose subsets $T \subset L \subset S_2 \subset V_2$
such that $L$ is compact,
$T$ is open,
and there is a \pj{} $p_2 \in I_2 (V_2) / I_2 (T)$
whose image in $I_2 (V_2) / I_2 (S_2)$ is equal to~$e_2$.
Use Lemma~\ref{L_6201_CptProd} to choose an open set
$V_1 \subset U$ such that $x_1 \in V_1$ and $V_1 \times L \subset Y$.
Define
$S_1 = V_1 \cap \big( X_1 \setminus {\overline{ \{ x_1 \} }} \big)$,
which is an open subset of $V_1$.
Since $A_1$ has the ideal property,
there is a \pj{} $p_1 \in I_1 (V_1)$
whose image $e_1 \in I_1 (V_1) / I_1 (S_1)$
is nonzero.

We claim that
\begin{equation}\label{Eq_6201_Disj}
Y \cap \big[ (V_1 \setminus S_1) \times (V_2 \setminus S_2) \big] = \E.
\end{equation}
The definitions of the sets involved imply that
\[
V_1 \setminus S_1 \subset {\overline{ \{ x_1 \} }}
\andeqn
V_2 \setminus S_2
 = \big\{ y \in V_2 \colon (x_1, y) \not\in Y \big\}.
\]
Therefore
\[
(V_1 \setminus S_1) \times (V_2 \setminus S_2)
 \subset {\overline{ \{ x_1 \} \times (V_2 \setminus S_2) }}
\andeqn
\big[ \{ x_1 \} \times (V_2 \setminus S_2) \big] \cap Y = \E.
\]
Since $Y$ is open,
the claim follows.

We now want to construct a commutative diagram as follows:
\begin{equation}\label{Eq_6202_Diagram}
\xymatrix{
I (V_1 \times V_2) \ar[d]_{\io} \ar[r]^{\pi \hspace*{2.5em}}
& I (V_1 \times V_2) / I (V_1 \times T)
    \ar[d]_{\ph} \ar[r]^{\hspace*{1em} \kp}
& I (V_1 \times V_2) / I (R) \ar[d]_{\ps} \\
I (Z) \ar[r]_{\sm \hspace*{2em}}
& I (Z) / I (Y) \ar[r]_{\rh \hspace*{1em}}
& I (Z) / I (Y \cup R).
}
\end{equation}
The maps $\pi$ and $\sm$ are the obvious quotient maps,
and $\io$ is the obvious inclusion,
coming from $V_1 \times V_2 \subset U \times V_2 \subset Z$.
We define $R = (V_1 \times S_2) \cup (S_1 \times V_2)$,
which is an open subset of $V_1 \times V_2$.
In particular, $R \subset Z$.
The map $\kp$ is then the quotient map arising from the
inclusion $V_1 \times T \subset V_1 \times S_2 \subset R$,
and $\rh$ is the quotient map arising from the
inclusion $Y \subset Y \cup R$.
Since $\pi$ is surjective,
the map $\ph$ is unique if it exists.
For existence,
we must show that $\Ker (\pi) \subset \Ker (\sm \circ \io)$.
This inclusion follows from
\[
\Ker (\pi) = I (V_1 \times T),
\,\,\,\,\,\,
\Ker (\sm \circ \io) = I \big( (V_1 \times V_2) \cap Y \big),
\andeqn
V_1 \times T \subset V_1 \times L \subset Y.
\]

It remains to construct~$\ps$.
Since $\kp$ is surjective,
the map $\ps$ is unique if it exists.
We claim that $\Ker (\kp) = \Ker (\rh \circ \ph)$.
Since $\pi$ is surjective,
it suffices to prove that
$\Ker (\kp \circ \pi) = \Ker (\rh \circ \ph \circ \pi)$.
We easily check that
\[
\Ker (\kp \circ \pi) = I (R)
\andeqn
\Ker (\rh \circ \ph \circ \pi)
 = I \big( (Y \cup R) \cap (V_1 \times V_2) \big).
\]
It follows from~(\ref{Eq_6201_Disj})
that $(Y \cup R) \cap (V_1 \times V_2) = R$,
proving the claim.
The claim implies not only that there is a map~$\ps$
making the right hand square commute,
but also that $\ps$ is injective.

The identification
$\Prim (A_1 \otimes_{\mathrm{min}} A_2) = X_1 \times X_2$
gives identifications
\[
I (V_1 \times V_2) / I (V_1 \times T)
 = I (V_1) \otimes_{\mathrm{min}} [I (V_2) / I (T)]
\]
and
\[
I (V_1 \times V_2) / I (R)
 = [I (V_1) / I (S_1)] \otimes_{\mathrm{min}} [I (V_2) / I (S_2)],
\]
with respect to which $\kp$ becomes the tensor product of
the quotient maps
\[
I (V_1) \to I (V_1) / I (S_1)
\andeqn
I (V_2) / I (T) \to I (V_2) / I (S_2).
\]
Define $q \in I (Z) / I (Y)$
by $q = \ph (p_1 \otimes p_2)$.
Then $q$ is a projection.
Moreover,
\[
\rh (q)
 = (\ps \circ \kp) (p_1 \otimes p_2)
 = \ps (e_1 \otimes e_2).
\]
Since $e_1 \neq 0$, $e_2 \neq 0$, and $\ps$ is injective,
it follows that $q \neq 0$.
Thus $I (Z) / I (Y)$ contains a nonzero \pj,
as desired.
\end{proof}

Using results from Section~\ref{Sec_WIPIP} below,
we can now give a case
in which the tensor product of C*-algebras
with the weak ideal property
again has this property.

\begin{cor}\label{P_5Z26_WIPforTP}
Let $A$ and $B$ be separable C*-algebras.
Assume that $A$ or $B$ is exact,
and that $A$ is
in the class~${\mathcal{W}}$ of
Theorem~\ref{T_5Z26_ClassForEq}.
If $A$ and $B$ have the weak ideal property,
then $A \otimes_{\mathrm{min}} B$ has the weak ideal property.
\end{cor}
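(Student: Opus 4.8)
The plan is to reduce everything to Theorem~\ref{T_5Z21_TPwip}, whose hypotheses ask for one tensor factor to have the \emph{ideal} property rather than merely the weak ideal property. The essential point is that the defining feature of the class~${\mathcal{W}}$, as recorded in Theorem~\ref{T_5Z26_ClassForEq}, is precisely that within~${\mathcal{W}}$ the weak ideal property, the ideal property, and topological dimension zero all coincide. So the first step is to invoke this equivalence: since $A \in {\mathcal{W}}$ and $A$ has the weak ideal property, $A$ in fact has the ideal property.

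Once this upgrade is in hand, all the hypotheses of Theorem~\ref{T_5Z21_TPwip} are satisfied. Indeed, $A$ and $B$ are separable, one of them is exact, $A$ has the ideal property, and $B$ has the weak ideal property. The second step is therefore simply to apply Theorem~\ref{T_5Z21_TPwip} with $A_1 = A$ and $A_2 = B$, which gives that $A \otimes_{\mathrm{min}} B$ has the weak ideal property.

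I do not anticipate any genuine obstacle here: the entire content of the corollary lies in recognizing that membership in~${\mathcal{W}}$ converts the weaker hypothesis on~$A$ into the stronger one needed to feed Theorem~\ref{T_5Z21_TPwip}. The only matters to check are the bookkeeping matches — separability of both factors and exactness of one of them — and these are hypotheses given directly. One should also note that the exactness requirement in Theorem~\ref{T_5Z21_TPwip} is stated symmetrically (``$A_1$ or $A_2$ is exact''), so the fact that we are not told which of $A$ and $B$ is the exact factor causes no difficulty, since it is $A$ (the factor with the ideal property) that must play the role of~$A_1$.
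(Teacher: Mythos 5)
Your proposal is correct and follows essentially the same route as the paper: the paper's own proof also upgrades $A$ from the weak ideal property to the ideal property (it unpacks the lemmas behind Theorem~\ref{T_5Z26_ClassForEq} rather than citing that theorem directly, but the content is identical) and then applies Theorem~\ref{T_5Z21_TPwip}. Your observation about the symmetric exactness hypothesis is also correct, and there is no circularity in citing Theorem~\ref{T_5Z26_ClassForEq}, since its proof does not depend on this corollary.
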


The class~${\mathcal{W}}$ is the smallest class of separable C*-algebras
which contains the separable locally AH~algebras,
the separable LS~algebras,
the separable type~I C*-algebras,
and the separable purely infinite C*-algebras,
and is closed under finite and countable direct sums
and under minimal tensor products when one tensor factor is exact.

\begin{proof}[Proof of Corollary~\ref{P_5Z26_WIPforTP}]
By Theorem~\ref{T_5Y28_wipImpTDz},
the algebra $A$ has topological dimension zero.
Combine
Lemma~\ref{L_5Z26_PDS},
Lemma~\ref{L_5Z24_PTP},
Lemma~\ref{L_5Z26_LSinP},
Lemma \ref{L_5Z26_AHIsLS}(\ref{L_5Z26_AHIsLS_AH}),
Proposition~\ref{P_5Z24_Type1},
and
Theorem~\ref{T_5Y27_spiTDz},
to see that $A$ is in the class $\CP$
of Notation~\ref{N_5Z24_ClassP}.
Thus $A$ has the ideal property.
So $A \otimes_{\mathrm{min}} B$ has the weak ideal property
by Theorem~\ref{T_5Z21_TPwip}.
\end{proof}

Combining Proposition~\ref{P_6201_WIPForStable} below
with Theorem~\ref{T_5Z21_TPwip}
and with Theorem 8.5(6) of~\cite{PsnPh2},
one immediately sees that if
$A$ and $B$ are separable C*-algebras with the weak ideal property,
one of which is exact,
and $\Prim (A)$ is Hausdorff,
then $A \otimes_{\mathrm{min}} B$ has the weak ideal property.
A different argument allows one to prove this
without separability.
We give it here,
although it is based on material on $C_0 (X)$-algebras
in the next section.
We first consider the case in which $\Prim (A)$ is finite.

\begin{prp}\label{R_6203_TPS}
Let $A$ and $B$ be C*-algebras with the weak ideal property
such that $\Prim (A)$ is finite and $A$ or $B$ is exact.
Then $A \otimes_{\mathrm{min}} B$ has the weak ideal property.
\end{prp}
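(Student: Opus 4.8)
The plan is to induct on the number $n$ of points of the finite space $\Prim (A)$, using three inputs: that the weak ideal property passes to ideals, to quotients, and to extensions (the first two are recorded in the introduction and in Example~\ref{E_5Z20_NotExact}, and all three follow from the characterization in item~(\ref{Item_5X31_WIP}) together with the permanence results of Section~5 of~\cite{PsnPh2}); that $K \otimes A$ is stable, so that a projection produced after an extra tensoring by $K$ can be pushed back down; and the identification of prime ideal spaces of minimal tensor products with one exact factor from~\cite{BlnKb}, exactly as used in the proof of Theorem~\ref{T_5Z21_TPtdz}. Note that ``one of $A$, $B$ is exact'' is preserved by the induction: if $B$ is exact it stays exact, while if $A$ is exact then the ideals and quotients of $A$ produced below are again exact.

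For the inductive step, suppose $n \geq 2$. Since $\Prim (A)$ is a $T_0$ space with at least two points it has a nonempty proper open subset $U$; put $I = I_A (U)$, so that $\Prim (I) \cong U$ and $\Prim (A / I) \cong \Prim (A) \setminus U$ each have fewer than $n$ points, and both $I$ and $A / I$ inherit the weak ideal property. I claim the sequence
\[
0 \longrightarrow I \otimes_{\mathrm{min}} B
  \longrightarrow A \otimes_{\mathrm{min}} B
  \longrightarrow (A / I) \otimes_{\mathrm{min}} B
  \longrightarrow 0
\]
is exact. The left map is always injective and the right map always surjective, so only $\Ker = I \otimes_{\mathrm{min}} B$ needs checking. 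Because $A$ or $B$ is exact (and the relevant subquotients of $A$ are exact when $A$ is), each of $I \otimes_{\mathrm{min}} B$, $A \otimes_{\mathrm{min}} B$, and $(A / I) \otimes_{\mathrm{min}} B$ has prime ideal space the product of the prime ideal spaces of its two factors, as in the proof of Theorem~\ref{T_5Z21_TPtdz}. A short computation with these products shows that the kernel $N$ of the surjection and the ideal $I \otimes_{\mathrm{min}} B$ are contained in exactly the same prime ideals of $A \otimes_{\mathrm{min}} B$; since any closed ideal is the intersection of the prime ideals containing it, $N = I \otimes_{\mathrm{min}} B$. With the sequence exact, the induction hypothesis gives the weak ideal property for the two outer terms, and closure under extensions gives it for $A \otimes_{\mathrm{min}} B$.

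The base case $n = 1$, where $A$ is simple (a nonzero algebra with one-point primitive ideal space has no nontrivial ideals), is the heart of the argument, and here I would verify Definition~\ref{D_5Y27_WIP} directly. Writing $K \otimes (A \otimes_{\mathrm{min}} B) \cong (K \otimes A) \otimes_{\mathrm{min}} B$ with $K \otimes A$ simple and one factor exact, the closed ideals of this algebra are exactly $(K \otimes A) \otimes_{\mathrm{min}} J$ for closed ideals $J \subset B$ (the one-point base case of the ideal correspondence underlying Theorem~\ref{T_5Z21_TPtdz}); hence a pair $\mathcal I \subsetneq \mathcal J$ gives
\[
\mathcal J / \mathcal I \cong (K \otimes A) \otimes_{\mathrm{min}} (J_2 / J_1),
\qquad J_1 \subsetneq J_2 \subset B .
\]
Now $A$ simple with the weak ideal property forces a nonzero projection $e \in K \otimes A$, and $B$ having the weak ideal property forces a nonzero projection $p \in K \otimes (J_2 / J_1)$; then $e \otimes p$ is a nonzero projection in
\[
(K \otimes A) \otimes_{\mathrm{min}} \big( K \otimes (J_2 / J_1) \big)
 \cong K \otimes \big[ (K \otimes A) \otimes_{\mathrm{min}} (J_2 / J_1) \big]
 = K \otimes (\mathcal J / \mathcal I) .
\]
Finally, since $K \otimes A$ absorbs $K$, the algebra $\mathcal J / \mathcal I$ is itself $K$-stable, so $\mathcal J / \mathcal I \cong K \otimes (\mathcal J / \mathcal I)$ already contains a nonzero projection. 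This establishes the weak ideal property for $A \otimes_{\mathrm{min}} B$ when $A$ is simple.

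The step I expect to demand the most care is the exactness of the tensored sequence in the inductive step when it is $A$ rather than $B$ that is exact: one cannot then invoke exactness of the tensoring factor, and I would instead deduce $\Ker = I \otimes_{\mathrm{min}} B$ from the prime-ideal-space product of~\cite{BlnKb} as sketched, which is precisely the device that keeps the whole argument free of a separability hypothesis. The other delicate point is the bookkeeping in the base case: the weak ideal property only supplies a projection in $K \otimes (J_2 / J_1)$, not in $J_2 / J_1$ itself (which may have none), so it is essential that stability of $K \otimes A$ lets the projection $e \otimes p$ descend to $\mathcal J / \mathcal I$.
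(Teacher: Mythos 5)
Your proposal is correct and follows essentially the same route as the paper: induction on $\card(\Prim(A))$, with the base case handled via the ideal correspondence for $(\text{simple}) \otimes_{\mathrm{min}} B$ from~\cite{BlnKb} and a tensor product of projections, and the inductive step via the exact tensored sequence together with closure of the weak ideal property under extensions (Theorem 8.5(5) of~\cite{PsnPh2}). The only cosmetic differences are that the paper cites Propositions 2.16 and 2.17 of~\cite{BlnKb} directly for exactness of the sequence rather than re-deriving it from the prime ideal space product, and avoids your final stability-descent step by working with ideals of $A \otimes_{\mathrm{min}} B$ itself and producing the projection in $K \otimes (L_2/L_1)$.
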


\begin{proof}
First suppose that $A$ is simple.
Using Proposition 2.17(2) and
parts (ii) and~(iv) of Proposition 2.16 of~\cite{BlnKb},
we see that $J \mapsto A \otimes_{\mathrm{min}} J$
is a one to one correspondence from the ideals of $B$
to the ideals of $A \otimes_{\mathrm{min}} B$;
moreover, if $J_1 \subset J_2 \subset B$ are ideals,
then
\[
(A \otimes_{\mathrm{min}} J_2) / (A \otimes_{\mathrm{min}} J_1)
  \cong A \otimes_{\mathrm{min}} (J_2 / J_1).
\]
Now let $L_1, L_2 \subset A \otimes_{\mathrm{min}} B$
be ideals with $L_1 \subset L_2$.
It follows that there exist ideals $J_1, J_2 \subset B$
with $J_1 \subset J_2$
such that $L_2 / L_1 \cong A \otimes_{\mathrm{min}} (J_2 / J_1)$.
There are \nzp{s}
$p_1 \in K \otimes A$ and $p_2 \in K \otimes (J_2 / J_1)$,
so $p_1 \otimes p_2$ is a \nzp{}
in
\[
[K \otimes A] \otimes_{\mathrm{min}} [K \otimes (J_2 / J_1)]
  \cong K \otimes (L_2 / L_1).
\]

We prove the general case by induction on $\card ( \Prim (A) )$.
We just did the case $\card ( \Prim (A) ) = 1$.
So let $n \in \N$
and suppose the result is known whenever $\card ( \Prim (A) ) < n$.
Suppose that $\card ( \Prim (A) ) = n$.
Choose a nontrivial ideal $I \subset A$.
By Proposition 2.17(2) and Proposition 2.16(iv) of~\cite{BlnKb},
the sequence
\[
0 \longrightarrow I \otimes_{\mathrm{min}} B
  \longrightarrow A \otimes_{\mathrm{min}} B
  \longrightarrow (A / I) \otimes_{\mathrm{min}} B
  \longrightarrow 0
\]
is exact.
The algebras $I \otimes_{\mathrm{min}} B$
and $(A / I) \otimes_{\mathrm{min}} B$
have the weak ideal property by the induction hypothesis,
so $A \otimes_{\mathrm{min}} B$ has the weak ideal property
by Theorem 8.5(5) of~\cite{PsnPh2}.
\end{proof}

Much of the proof of the following proposition
will be reused
in the proof of Proposition~\ref{P_6201_WIPForStable} below.

\begin{prp}\label{C_6201_T2TP}
Let $A$ and $B$ be C*-algebras
such that $A$ or $B$ is exact,
and $\Prim (A)$ is Hausdorff.
If $A$ and $B$ have the weak ideal property,
then $A \otimes_{\mathrm{min}} B$ has the weak ideal property.
\end{prp}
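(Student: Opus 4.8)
The plan is to exploit the $C_0(X)$-algebra structure that $A$ acquires from the hypothesis that $\Prim (A)$ is Hausdorff, and thereby reduce the weak ideal property for $A \otimes_{\mathrm{min}} B$, fiber by fiber, to the finite-spectrum case already established in \Prp{R_6203_TPS}.

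First I would set $X = \Prim (A)$. Since $A$ has the weak ideal property, \Thm{T_5Y28_wipImpTDz} shows that $A$ has topological dimension zero, that is, $X$ has a base of compact open sets; together with the hypothesis that $X$ is Hausdorff this makes $X$ a totally disconnected locally compact Hausdorff space. By the Dauns--Hofmann theorem $A$ is a $C_0(X)$-algebra over this $X$, and for each $x \in X$ the fiber $A_x = A / I_A (X \setminus \{ x \})$ has $\Prim (A_x)$ equal to the single point $\{ x \}$ (the set $\{ x \}$ is closed because $X$ is Hausdorff). In particular $\Prim (A_x)$ is finite, and in fact $A_x$ is simple. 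Being a quotient of $A$, each $A_x$ again has the weak ideal property (the weak ideal property passes to quotients, as already used in \Exa{E_5Z20_NotExact}), and $A_x$ is exact whenever $A$ is.

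Next I would pass to the tensor product. The algebra $A \otimes_{\mathrm{min}} B$ is again a $C_0(X)$-algebra, with structure map induced from that of $A$, and its fiber over $x$ is $A_x \otimes_{\mathrm{min}} B$. This is the step where the hypothesis that $A$ or $B$ is exact is essential: exactness guarantees, via Proposition~2.16(iv) and Proposition~2.17(2) of~\cite{BlnKb} (exactly as in the proof of \Prp{R_6203_TPS}), that
\[
0 \longrightarrow I_A (X \setminus \{ x \}) \otimes_{\mathrm{min}} B
  \longrightarrow A \otimes_{\mathrm{min}} B
  \longrightarrow A_x \otimes_{\mathrm{min}} B
  \longrightarrow 0
\]
is exact, which identifies the fiber. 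Since each $A_x$ has finite (indeed one-point) primitive ideal space, has the weak ideal property, and one of $A_x$, $B$ is exact, \Prp{R_6203_TPS} applies to give that every fiber $A_x \otimes_{\mathrm{min}} B$ has the weak ideal property. Finally, because $X$ is totally disconnected and all fibers of the $C_0(X)$-algebra $A \otimes_{\mathrm{min}} B$ have the weak ideal property, the $C_0(X)$-algebra permanence theorem for the weak ideal property proved in the next section (which carries no separability hypothesis) yields that $A \otimes_{\mathrm{min}} B$ has the weak ideal property.

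I expect the main obstacle to be the bookkeeping in the middle step, namely verifying that the $C_0(X)$-algebra fibers of the minimal tensor product are genuinely $A_x \otimes_{\mathrm{min}} B$. This is precisely where exactness of one factor enters and cannot be dropped (cf.\ \Exa{E_5Z20_NotExact}): one must match the ideal $C_0 (X \setminus \{ x \}) \cdot (A \otimes_{\mathrm{min}} B)$ with $I_A (X \setminus \{ x \}) \otimes_{\mathrm{min}} B$ and know that quotienting by it commutes with the tensor product. Once the fibers are correctly identified as tensor products with the simple algebras $A_x$, the finite-spectrum case \Prp{R_6203_TPS} and the totally-disconnected $C_0(X)$-algebra theorem finish the argument; the same $C_0(X)$-fiber analysis is what I would reuse for the stable case in \Prp{P_6201_WIPForStable}.
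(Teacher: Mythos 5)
Your argument is correct and is essentially the paper's own proof: both realize $A$ as a $C_0(X)$-algebra over $X = \Prim (A)$ (Hausdorff, and totally disconnected by Theorem~\ref{T_5Y28_wipImpTDz}), identify the fibers of $A \otimes_{\mathrm{min}} B$ as $(A/P) \otimes_{\mathrm{min}} B$, apply Proposition~\ref{R_6203_TPS} to each fiber, and conclude with Theorem~\ref{T_5Z20_PrInCXAlg}(\ref{T_5Z20_PrInCXAlg_wip}). The only cosmetic difference is in how the tensor product fibers are identified --- the paper uses Corollary~2.8 of~\cite{KW1} for continuous bundles, while you invoke the Blanchard--Kirchberg exact sequence already used in Proposition~\ref{R_6203_TPS} --- and both routes require the same exactness hypothesis.
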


\begin{proof}
Set $X = \Prim (A)$.
We first claim that $A$ is a \ct{} $C_0 (X)$-algebra
with fiber $A_P = A / P$ for $P \in X$.
In the language of continuous fields,
this is Theorem~2.3 of~\cite{Fl}.
To get it in our language,
apply Theorem~3.3 of~\cite{Nls},
taking $\af \colon \Prim (A) \to X$
to be the identity map.
Identifying continuous $C_0 (X)$-algebras and continuous C*-bundles
as in Proposition~\ref{P_5Z31_Identify},
we use Corollary~2.8 of~\cite{KW1}
to see that
$A \otimes_{\mathrm{min}} B$ is a continuous $C_0 (X)$-algebra,
with fibers
$(A \otimes_{\mathrm{min}} B)_P = (A / P) \otimes_{\mathrm{min}} B$
for $P \in X$.

The algebra $A$ has topological dimension zero
by Theorem~\ref{T_5Y28_wipImpTDz}.
Since $X$ is Hausdorff,
it follows that $X$ is totally disconnected.
For every $P \in X$,
the quotient $A / P$ is simple because $\{ P \}$ is closed,
and has the weak ideal property by Theorem 8.5(5) of~\cite{PsnPh2}.
So the fiber
$(A \otimes_{\mathrm{min}} B)_P = (A / P) \otimes_{\mathrm{min}} B$
has the weak ideal property by
Proposition~\ref{R_6203_TPS}.
Theorem~\ref{T_5Z20_PrInCXAlg}(\ref{T_5Z20_PrInCXAlg_wip})
now implies that $A \otimes_{\mathrm{min}} B$
has the weak ideal property.
\end{proof}

\begin{qst}\label{Q_5Z22_TP_wip}
Let $A$ and $B$ be C*-algebras, with $A$ exact.
If $A$ and $B$ have the weak ideal property,
does $A \otimes_{\mathrm{min}} B$ have the weak ideal property?
\end{qst}

\begin{qst}\label{Q_5Z22_TP_rsp}
Let $A$ and $B$ be C*-algebras, with $A$ exact.
If $A$ and $B$ have residual~(SP),
does $A \otimes_{\mathrm{min}} B$ have residual~(SP)?
\end{qst}

\section{Permanence properties for bundles over totally
  disconnected spaces}\label{Sec_PermBunTD}

We now turn to section algebras of continuous fields
over totally disconnected base spaces.
We prove that if $A$ is the section algebra
of a bundle over a totally disconnected space,
and the fibers all have one of the properties residual~(SP),
topological dimension zero,
the weak ideal property,
or the combination of the ideal property and pure infiniteness,
then $A$ also has the same property.
Moreover, if $A$ has one of these properties,
so do all the fibers.

The section algebra of a continuous field
over a space which is not totally disconnected
will not have the weak ideal property
except in trivial cases,
and the same is true of the other properties
involving the existence of projections in ideals.
See Example~\ref{E_5Z20_01} and Example~\ref{E_5Z20_01Inf},
showing that this fails even for trivial continuous fields.
Accordingly,
we can't drop
the requirement that the base space be totally disconnected.
Indeed, we prove that for a continuous field
over a second countable locally compact Hausdorff space
with nonzero fibers,
if the section algebra is separable
and has one of the four properties above
then the base space must be totally disconnected.

The fact that the properties we consider
are equivalent to being
residually hereditarily in a suitable class~$\CC$
underlies some of our reasoning,
but knowing that a property has this form
does not seem to be sufficient for our results.

Following standard notation,
if $A$ is a C*-algebra then $M (A)$ is its multiplier algebra
and $Z (A)$ is its center.

\begin{dfn}\label{D_5Z20_CXAlg}
Let $X$ be a locally compact Hausdorff space.
Then a {\emph{$C_0 (X)$-algebra}}
is a C*-algebra $A$ together with a nondegenerate (see below)
\hm{} $\io \colon C_0 (X) \to Z (M (A))$.
Here $\io$ is {\emph{nondegenerate}}
if ${\ov{\io (C_0 (X)) A}} = A$.
\end{dfn}

Unlike in Definition 2.1 of~\cite{Nls},
we do not assume that $\io$ is injective.
This permits a \hsa{}
of $A$ to also be a $C_0 (X)$-algebra,
without having to replace $X$ by a closed subspace.

\begin{ntn}\label{N_5Z20_CXAlgNtn}
Let the notation be as in Definition~\ref{D_5Z20_CXAlg}.
For an open set $U \subset X$ we identify $C_0 (U)$
with the obvious ideal of $C_0 (X)$.
Then ${\ov{\io (C_0 (U)) A}}$
is an ideal in~$A$.
For $x \in X$, we define
\[
A_x = A \big/ {\ov{\io (C_0 (X \setminus \{ x \}) ) A}},
\]
and we let $\ev_x \colon A \to A_x$
be the quotient map.
For a closed subset $L \subset X$,
we define $A |_L = A / {\ov{\io (C_0 (X \setminus L)) A}}$.
We equip it with the $C_0 (L)$-algebra structure
which comes from the fact that $C_0 (X \setminus L)$
is contained in the kernel of the composition
\[
C_0 (X) \longrightarrow Z (M (A)) \longrightarrow Z (M (A |_L)).
\]
\end{ntn}

Thus $A_x = A |_{ \{ x \} }$.
Strictly speaking,
$A$ is the section algebra of a bundle
and
$A |_L$ is the section algebra of the restriction
of this bundle to~$L$,
but the abuse of notation is convenient.

\begin{lem}\label{L_5Z20_PropOfFld}
Let the notation be as in Definition~\ref{D_5Z20_CXAlg}
and Notation~\ref{N_5Z20_CXAlgNtn}.
Let $a \in A$.
Then:
\begin{enumerate}
\item\label{L_5Z20_PropOfFld_norm}
$\| a \| = \sup_{x \in X} \| \ev_x (a) \|$.
\item\label{L_5Z20_PropOfFld_Cpt}
For every $\ep > 0$,
the set
$\big\{ x \in X \colon \| \ev_x (a) \| \geq \ep \big\} \subset X$
is compact.
\item\label{L_5Z20_PropOfFld_usc}
The function $x \mapsto \| \ev_x (a) \|$
is upper semicontinuous.
\item\label{L_5Z20_PropOfFld_mult}
For $f \in C_0 (X)$ and $x \in X$,
we have $\ev_x (\io (f) a) = f (x) \ev_x (a)$.
\end{enumerate}
\end{lem}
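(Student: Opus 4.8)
The plan is to prove the four assertions in the order (4), (3), (2), (1). The multiplicativity formula in part~(4) is the computational engine behind the semicontinuity and compactness statements, and those two follow by direct estimates once (4) is available; the norm formula in part~(1) is the only place where a genuine idea is needed, namely that the family of fibre maps $\ev_x$ is jointly faithful. I would first record that the nondegenerate homomorphism $\io\colon C_0(X)\to Z(M(A))$ extends uniquely to a unital homomorphism ${\ov\io}\colon C_b(X)\to Z(M(A))$ with $\io(1)=1_{M(A)}$. For part~(4), set $h=f-f(x)\cdot 1\in C_b(X)$; it suffices to show $\io(h)a\in{\ov{\io(C_0(X\setminus\{x\}))A}}$, since then $\ev_x(\io(f)a)-f(x)\ev_x(a)=\ev_x(\io(h)a)=0$. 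Using nondegeneracy I would approximate $a$ by finite sums $\sum_i\io(g_i)b_i$ with $g_i\in C_0(X)$ and $b_i\in A$; then $\io(h)\io(g_i)=\io(hg_i)$ with $hg_i\in C_0(X)$ and $(hg_i)(x)=h(x)g_i(x)=0$, so $\io(hg_i)b_i\in\io(C_0(X\setminus\{x\}))A$, and the claim follows by passing to the limit.

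Granting~(4), part~(3) is a quotient-norm estimate. Writing $J_{x_0}={\ov{\io(C_0(X\setminus\{x_0\}))A}}$, so that $\|\ev_{x_0}(a)\|=\dist(a,J_{x_0})$, I would choose $j=\sum_i\io(g_i)b_i$ with each $g_i(x_0)=0$ and $\|a-j\|<\|\ev_{x_0}(a)\|+\ep$, and estimate
\[
\|\ev_x(a)\|\le\|a-j\|+\sum_i|g_i(x)|\,\|b_i\|.
\]
Continuity of the $g_i$ together with $g_i(x_0)=0$ makes the second term smaller than $\ep$ on a neighbourhood of $x_0$, giving $\|\ev_x(a)\|<\|\ev_{x_0}(a)\|+2\ep$ there, which is upper semicontinuity. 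For part~(2), the set $\{x\colon\|\ev_x(a)\|\ge\ep\}$ is closed by~(3). Approximating $a$ by $\sum_i\io(f_i)b_i$ with $\|a-\sum_i\io(f_i)b_i\|<\ep/2$ and applying~(4) shows that any $x$ in this set satisfies $\sum_i|f_i(x)|\,\|b_i\|\ge\ep/2$, so some $|f_i(x)|\ge\delta_i$ for a fixed $\delta_i>0$; hence the set is a closed subset of the finite union of the compact sets $\{|f_i|\ge\delta_i\}$ (compact because $f_i\in C_0(X)$), and is therefore compact.

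In part~(1) the inequality $\sup_x\|\ev_x(a)\|\le\|a\|$ is immediate since each $\ev_x$ is a quotient map; the content, and the step I expect to be the main obstacle, is the reverse inequality, equivalently the faithfulness $\bigcap_{x\in X}J_x=0$. My plan here is to use irreducible representations and the centrality of~$\io$. Given $a$, put $c=a^*a$ and choose a pure state $\ph$ with $\ph(c)=\|c\|=\|a\|^2$; such $\ph$ exists because the weak*-compact face of the state space on which $\ph\mapsto\ph(c)$ is maximal contains an extreme point. In the GNS representation $\pi_\ph$, which is irreducible with kernel some $P\in\Prim(A)$, each central element $\pi_\ph(\io(f))$ is a scalar, so $f\mapsto\pi_\ph(\io(f))$ is a character on $C_0(X)$; it is nonzero because $\pi_\ph(\io(C_0(X))A)\ne 0$ by nondegeneracy, hence it is evaluation at some $x_0\in X$. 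Consequently $\io(C_0(X\setminus\{x_0\}))A\subset P$, so $J_{x_0}\subset P$. Since $\ph$ vanishes on $P=\ker\pi_\ph$, it descends to a state on $A/P$ witnessing $\|c+P\|=\|c\|$, and from $J_{x_0}\subset P$ we get $\|\ev_{x_0}(c)\|\ge\|c+P\|=\|c\|$, whence $\|\ev_{x_0}(a)\|=\|a\|$. This gives part~(1). As a backup, should the centrality argument prove awkward in the non-injective case, I would instead reduce to injective~$\io$ by quotienting $C_0(X)$ by $\ker\io=C_0(U)$, note that the fibres over $U$ vanish so the supremum is unchanged, and invoke the standard $C_0(X)$-algebra theory of~\cite{Nls}.
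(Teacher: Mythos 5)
Your proof is correct, but it takes a genuinely different route from the paper. The paper's proof is essentially a citation: when $\io$ is injective, parts (1)--(3) are Corollary~2.2 of~\cite{Nls} and part~(4) is extracted from the proof of Theorem~2.3 of~\cite{Nls}; the general case is then reduced to the injective one by restricting to the closed set $Y \subset X$ supporting $\io$ (so your ``backup'' plan is, almost verbatim, what the paper actually does). You instead give a self-contained proof from scratch, and your argument has the virtue of never using injectivity of $\io$, so no reduction step is needed. The logical order (4) $\Rightarrow$ (3) $\Rightarrow$ (2), with (1) handled separately, is sound: the identity $\ev_x(\io(f)a) = f(x)\ev_x(a)$ follows from nondegeneracy by approximating $a$ with sums $\sum_i \io(g_i)b_i$; upper semicontinuity and compactness of $\{\|\ev_x(a)\| \geq \ep\}$ then follow from the displayed estimates; and for the norm formula the key point, that $\bigcap_x \Ker(\ev_x) = 0$, is correctly obtained by noting that an irreducible representation sends the central elements $\io(f)$ to scalars, producing a character of $C_0(X)$ (nonzero by nondegeneracy) and hence a point $x_0$ with $J_{x_0} \subset \Ker \pi_\ph$, so that a pure state maximizing $a^*a$ witnesses $\|\ev_{x_0}(a)\| = \|a\|$. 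The one place to be slightly careful is the existence of such a pure state when $A$ is nonunital, where one should work with the quasi-state space rather than the state space; this is standard. What the paper's approach buys is brevity; what yours buys is a complete argument independent of the literature and valid uniformly in the non-injective case.
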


\begin{proof}
When $\io$ is injective,
the first three parts are Corollary~2.2 of~\cite{Nls},
and
the last part is contained in the proof of Theorem~2.3 of~\cite{Nls}.
In the general case,
let $Y \subset X$ be the closed subset such that
\[
{\operatorname{Ker}} (\io)
= \big\{ f \in C_0 (X) \colon f |_Y = 0 \big\}.
\]
Then $A$ is a $C_0 (Y)$-algebra in the obvious way.
We have $A_x = 0$ for $x \not\in Y$,
and the function $x \mapsto \| \ev_x (a) \|$
associated with the $C_0 (X)$-algebra structure
is gotten by extending the one
associated with the $C_0 (Y)$-algebra structure
to be zero on $X \setminus Y$.
The first three parts then
follow from those for the $C_0 (Y)$-algebra structure,
as does the last when $x \in Y$.
The last part is trivial for $x \in X \setminus Y$.
\end{proof}

\begin{dfn}\label{D_5Z22_CtBundle}
Let $X$ be a locally compact Hausdorff space,
and let $A$ be a $C_0 (X)$-algebra.
We say that $A$ is a {\emph{continuous $C_0 (X)$-algebra}}
if for all $a \in A$,
the map $x \mapsto \| \ev_x (a) \|$
of Lemma \ref{L_5Z20_PropOfFld}(\ref{L_5Z20_PropOfFld_usc})
is \ct.
\end{dfn}

\begin{prp}\label{P_5Z22_CtField}
Let $X$ be a locally compact Hausdorff space
and let $A$ be a C*-algebra.
Then \hm{s} $\io \colon C_0 (X) \to Z (M (A))$
which make $A$ is a continuous $C_0 (X)$-algebra
correspond bijectively to
isomorphisms of $A$
with the algebra of \ct{} sections vanishing at infinity
of a continuous field of C*-algebras over~$X$,
as in 10.4.1 of~\cite{Dx}.
\end{prp}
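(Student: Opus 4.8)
The plan is to exhibit the two passages between continuous $C_0(X)$-algebra structures and Dixmier continuous fields explicitly and verify that they are mutually inverse, leaning on the upper semicontinuous dictionary of~\cite{Nls} that already underlies Lemma~\ref{L_5Z20_PropOfFld}, and isolating exactly where the continuity hypothesis of Definition~\ref{D_5Z22_CtBundle} enters. Since Definition~\ref{D_5Z20_CXAlg} does not require $\io$ to be injective, I would first reduce to the injective case exactly as in the proof of Lemma~\ref{L_5Z20_PropOfFld}: if $Y \subset X$ is the closed set with $\Ker (\io) = \{ f \in C_0 (X) \colon f |_Y = 0 \}$, then $A_x = 0$ for $x \notin Y$, and a continuous field over~$X$ whose fibers vanish off~$Y$ is the same datum as a continuous field over~$Y$ (extension by zero). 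So I may assume $\io$ injective.

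For the forward passage, given a continuous $C_0(X)$-algebra~$A$, I would take the fibers $A_x$ of Notation~\ref{N_5Z20_CXAlgNtn} and assign to $a \in A$ the vector field $x \mapsto \ev_x (a)$. Lemma~\ref{L_5Z20_PropOfFld} does most of the work: part~(\ref{L_5Z20_PropOfFld_norm}) shows this assignment is isometric, part~(\ref{L_5Z20_PropOfFld_Cpt}) shows each norm function vanishes at infinity, and the standing hypothesis of Definition~\ref{D_5Z22_CtBundle} says each such function is \ct, which is exactly Dixmier's continuity axiom 10.4.1(iii) of~\cite{Dx}. Density of $\{ \ev_x (a) \colon a \in A \}$ in each $A_x$ is immediate, since $\ev_x$ is a quotient map and hence surjective. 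The vector fields coming from~$A$ therefore satisfy all of Dixmier's axioms except possibly the local completeness condition; I would take $\Gm$ to be the set of vector fields that are local uniform limits of elements of~$A$, so that the local axiom holds by construction, and the real content becomes showing that this enlargement changes nothing, i.e.\ that the section algebra (vanishing at infinity) of the resulting field is again~$A$.

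The reverse passage is dual: given a Dixmier continuous field with section algebra~$A$, define $\io (f)$ for $f \in C_0 (X)$ to be the multiplier acting fiberwise by $(\io (f) a)(x) = f(x) a(x)$. One checks routinely that $\io (f) \in Z (M (A))$, that $\io$ is a $*$-\hm, and that it is nondegenerate because $C_0 (X)$ acts nondegenerately on each fiber; continuity of the field gives continuity of the norm functions, hence a continuous $C_0(X)$-algebra. Matching up the fibers and the norm functions $x \mapsto \| \ev_x (a) \|$ then shows the two passages are mutually inverse.

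The hard part will be the local gluing step in the forward passage: proving that $\Gm$ is recovered as the local uniform completion of~$A$ and that its section algebra is exactly~$A$. The tool is the nondegenerate $C_0 (X)$-module structure. Given a vector field $\et$ locally uniformly approximated by elements of~$A$, I would choose a partition of unity $(f_i)$ subordinate to the cover on which the approximations $a_i \in A$ are valid, form $\sum_i \io (f_i) a_i$, and use Lemma~\ref{L_5Z20_PropOfFld}(\ref{L_5Z20_PropOfFld_mult}), which gives $\ev_x (\io (f) a) = f(x) \ev_x (a)$, together with the norm formula of Lemma~\ref{L_5Z20_PropOfFld}(\ref{L_5Z20_PropOfFld_norm}), to show that this sum converges in~$A$ to an element evaluating to~$\et$. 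This is precisely where continuity of the $C_0(X)$-algebra, rather than mere upper semicontinuity, is needed to control the patched norms, and where the correspondence becomes a genuine bijection and not just a pair of maps.
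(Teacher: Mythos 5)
Your proposal is correct, but it takes a different route from the paper: the paper's entire proof is a citation of Theorem~2.3 of~\cite{Nls} (together with the definitions at the end of Section~1 there), whereas you unpack the correspondence explicitly. What you write is essentially the content of Nilsen's theorem specialized to the continuous case: the forward passage via $a \mapsto (\ev_x (a))_{x \in X}$ with Lemma~\ref{L_5Z20_PropOfFld} supplying isometry, vanishing at infinity, and fiberwise density, and the surjectivity of the gluing step via a partition of unity --- which is the same argument the paper itself deploys in the proof of Lemma~\ref{L_5Z20_CXAlgHsa}, so you could shorten your ``hard part'' by pointing there. Two small corrections. First, your opening reduction to injective~$\io$ is both unnecessary and slightly misleading: a continuous field over a closed subset $Y \subset X$ does \emph{not} in general extend by zero to a continuous field over~$X$, since the extended norm functions can fail to be continuous at boundary points of~$Y$; here it is harmless only because the continuity hypothesis on the $C_0 (X)$-algebra already forces $\| \ev_x (a) \| \to 0$ there, and since Dixmier's definition permits zero fibers you can simply work over~$X$ throughout. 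Second, continuity of the $C_0 (X)$-structure is not what makes the partition-of-unity patching work --- that argument is valid for merely upper semicontinuous bundles, which is exactly why Nilsen's theorem holds for arbitrary $C_0 (X)$-algebras; continuity is needed only so that the resulting field satisfies Dixmier's axiom that $x \mapsto \| \gm (x) \|$ be continuous, and conversely so that the reverse passage lands in continuous $C_0 (X)$-algebras. The trade-off is the usual one: the citation is shorter, while your version makes the proposition self-contained and makes visible exactly which axioms of 10.4.1 of~\cite{Dx} correspond to which parts of Definition~\ref{D_5Z22_CtBundle}.
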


\begin{proof}
This is essentially contained in Theorem~2.3 of~\cite{Nls},
referring to the definitions at the end of Section~1 of~\cite{Nls}.
\end{proof}

We will also need to use results from~\cite{KW1},
so we compare definitions.

\begin{prp}\label{P_5Z31_Identify}
Let $X$ be a locally compact Hausdorff space.
\begin{enumerate}
\item\label{P_5Z31_Identify_FromBdl}
Let
$\big( X, \, (\pi_x \colon A \to A_x)_{x \in X}, \, A \big)$
be a C*-bundle in the sense of Definition~1.1 of~\cite{KW1}.
Then $A$ is a $C_0 (X)$-algebra,
with structure map $\io \colon C_0 (X) \to Z (M (A))$
determined by the product
in Definition 1.1(ii) of~\cite{KW1},
\ifo{}
for every $a \in A$
the function $x \mapsto \| \pi_x (a) \|$
is upper semicontinuous and vanishes at infinity.
\item\label{P_5Z31_Identify_FromAlg}
Let $A$ be a $C_0 (X)$-algebra.
Then $\big( X, \, (\ev_x \colon A \to A_x)_{x \in X}, \, A \big)$
is a C*-bundle in the sense of Definition~1.1 of~\cite{KW1}
which satisfies the condition
in~(\ref{P_5Z31_Identify_FromBdl}).
\item\label{P_5Z31_Identify_Cont}
In (\ref{P_5Z31_Identify_FromBdl}) and~(\ref{P_5Z31_Identify_FromAlg}),
$A$ is a continuous $C_0 (X)$-algebra
\ifo{} the corresponding C*-bundle is continuous
in the sense of Definition 1.1(iii) of~\cite{KW1}.
\end{enumerate}
\end{prp}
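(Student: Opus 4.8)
My plan is to treat this as a dictionary between the two formalisms, observing that a C*-bundle in the sense of Definition~1.1 of~\cite{KW1} and a $C_0(X)$-algebra in the sense of Definition~\ref{D_5Z20_CXAlg} consist of the same data: a C*-algebra~$A$, a faithful family of surjective fiber maps, and a $C_0(X)$-module action compatible with the fiber maps through $\pi_x(f\cdot a)=f(x)\pi_x(a)$. The whole proposition then rests on one structural point — identifying the fiber maps of the bundle with the quotient maps $\ev_x$ of Notation~\ref{N_5Z20_CXAlgNtn} — after which the analytic content is supplied entirely by Lemma~\ref{L_5Z20_PropOfFld}. I would isolate this point as the claim that, for such data, $\Ker(\pi_x)=\ov{\io(C_0(X\setminus\{x\}))A}$; the inclusion $\supseteq$ is immediate from the compatibility formula, while $\subseteq$ is exactly where the upper semicontinuity and vanishing-at-infinity hypothesis of part~(\ref{P_5Z31_Identify_FromBdl}) enters.

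With this identification in hand, part~(\ref{P_5Z31_Identify_FromAlg}) and the forward half of~(\ref{P_5Z31_Identify_FromBdl}) are bookkeeping. Starting from a $C_0(X)$-algebra $A$, the maps $\ev_x$ are surjective *-homomorphisms by Notation~\ref{N_5Z20_CXAlgNtn}, the family is faithful by Lemma~\ref{L_5Z20_PropOfFld}(\ref{L_5Z20_PropOfFld_norm}), and the module compatibility is Lemma~\ref{L_5Z20_PropOfFld}(\ref{L_5Z20_PropOfFld_mult}), so $\big(X,(\ev_x)_{x\in X},A\big)$ is a C*-bundle; Lemma~\ref{L_5Z20_PropOfFld}(\ref{L_5Z20_PropOfFld_usc}) and Lemma~\ref{L_5Z20_PropOfFld}(\ref{L_5Z20_PropOfFld_Cpt}) show that $x\mapsto\|\ev_x(a)\|$ is upper semicontinuous and that $\{x:\|\ev_x(a)\|\geq\ep\}$ is compact, i.e.\ the bundle satisfies the condition in~(\ref{P_5Z31_Identify_FromBdl}). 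For the forward direction of~(\ref{P_5Z31_Identify_FromBdl}), if $A$ is a $C_0(X)$-algebra with $\io$ coming from the bundle product, then the fiber maps coincide with the $\ev_x$ and the same two parts of Lemma~\ref{L_5Z20_PropOfFld} give the asserted upper semicontinuity and vanishing at infinity.

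The reverse half of~(\ref{P_5Z31_Identify_FromBdl}) is the substantive direction. Given the $C_0(X)$-module action from Definition 1.1(ii) of~\cite{KW1}, I would define $\io(f)$ to be the operator $a\mapsto f\cdot a$. Boundedness and the *-homomorphism property come from $\|f\cdot a\|=\sup_x|f(x)|\,\|\pi_x(a)\|\leq\|f\|_{\infty}\|a\|$ (using faithfulness) and the module axioms; centrality of $\io(f)$ in $M(A)$ follows from $\pi_x\big(f\cdot(ab)\big)=f(x)\pi_x(a)\pi_x(b)=\pi_x\big(a(f\cdot b)\big)$ for all $x$, again with faithfulness; and nondegeneracy of $\io$ is nondegeneracy of the action. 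To conclude that $A$ is a $C_0(X)$-algebra with the correct fibers I would invoke the key claim: assuming the norm functions are upper semicontinuous and vanish at infinity, and given $\pi_x(a)=0$, the set $K_\ep=\{y:\|\pi_y(a)\|\geq\ep\}$ is compact and misses the open set $\{y:\|\pi_y(a)\|<\ep\}\ni x$, so choosing $f\in C_0(X\setminus\{x\})$ with $0\leq f\leq1$ and $f=1$ on $K_\ep$ yields $\|a-\io(f)a\|=\sup_y|1-f(y)|\,\|\pi_y(a)\|\leq\ep$, placing $a$ in $\ov{\io(C_0(X\setminus\{x\}))A}$.

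Finally, part~(\ref{P_5Z31_Identify_Cont}) is immediate: under this fiber identification, Definition~\ref{D_5Z22_CtBundle} calls $A$ a continuous $C_0(X)$-algebra precisely when every $x\mapsto\|\ev_x(a)\|$ is continuous, which is verbatim the continuity requirement of Definition 1.1(iii) of~\cite{KW1}. I expect the only real obstacle to be the hard inclusion $\Ker(\pi_x)\subseteq\ov{\io(C_0(X\setminus\{x\}))A}$ in the key claim; the rest is a matter of checking the bundle axioms against Lemma~\ref{L_5Z20_PropOfFld} and the module structure.
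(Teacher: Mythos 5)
Your argument is correct in substance, but it takes a genuinely different route from the paper's. The paper proves essentially nothing directly: it invokes Theorem~2.3 of~\cite{Nls}, which already gives the bijective correspondence between $C_0(X)$-algebras and upper semicontinuous C*-bundles in Nilsen's sense, and then observes that the only difference between Nilsen's definition and Definition~1.1 of~\cite{KW1} is the requirement that the superlevel sets $\{x \colon \|\pi_x(a)\| \geq r\}$ be compact, which is equivalent to the condition ``upper semicontinuous and vanishing at infinity'' in part~(\ref{P_5Z31_Identify_FromBdl}); part~(\ref{P_5Z31_Identify_Cont}) is then read off the definitions. You instead reprove the relevant portion of Nilsen's theorem from scratch: the identification $\Ker(\pi_x) = \ov{\io(C_0(X \setminus \{x\}))A}$ via a Urysohn function that is $1$ on the compact superlevel set is exactly the analytic heart of that correspondence, and your verification of the multiplier, centrality, and faithfulness axioms is sound. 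The paper's route buys brevity at the cost of the caveat it must record (Nilsen states his theorem for injective structure maps and densely nonzero fibers, so the reader must check his argument extends); your route buys a self-contained proof valid in the stated generality.

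Two points deserve more care than you give them. First, in the ``only if'' direction of part~(\ref{P_5Z31_Identify_FromBdl}) you assert that the fiber maps coincide with the $\ev_x$; taken literally, that coincidence is the hard inclusion of your key claim, whose proof uses the very upper semicontinuity you are trying to establish, so the step is circular unless ``$A$ is a $C_0(X)$-algebra'' is read (as the paper and Nilsen intend) to mean that the $C_0(X)$-algebra structure reproduces the given bundle, i.e.\ $\Ker(\ev_x) = \Ker(\pi_x)$ for all~$x$; under that reading the direction is immediate from Lemma~\ref{L_5Z20_PropOfFld}. Second, nondegeneracy of $\io$ is not one of the axioms of Definition~1.1 of~\cite{KW1}, so in the ``if'' direction it must be derived from the hypothesis; the same compact-superlevel-set computation you use for the key claim, run with $C_c(X)$ in place of $C_c(X \setminus \{x\})$, supplies it, so this is a presentational rather than a mathematical gap.
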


\begin{proof}
Theorem~2.3 of~\cite{Nls}
and the preceding discussion
gives a one to one correspondence between
$C_0 (X)$-algebras
and upper semicontinuous bundles over $X$ in the sense
of the definitions at the end of Section~1 of~\cite{Nls}.
(The version stated there is for a special case:
the structure map
of the $C_0 (X)$-algebra is required to be injective
and the fibers of the bundle are required to be nonzero
on a dense subset
of~$X$.
But the argument in~\cite{Nls} also proves the general case.
Some of the argument is also contained in Lemma~2.1 of~\cite{KW1}.)

The difference between
Definition~1.1 of~\cite{KW1}
and the definition of~\cite{Nls}
is that \cite{KW1}
omits the requirement
(condition~(ii) in~\cite{Nls})
that for
$a \in A$ and $r > 0$,
the set
$\big\{ x \in X \colon \| a (x) \| \geq r \big\}$
be compact.
It is easy to check that a function
$f \colon X \to [0, \I)$
is upper semicontinuous and vanishes at infinity
\ifo{} for every $r > 0$
the set
$\big\{ x \in X \colon f (x) \geq r \big\}$ is compact.

Part~(\ref{P_5Z31_Identify_Cont})
is now immediate from the definitions.
\end{proof}

We prove results stating that if $X$ is totally disconnected
and the fibers of a $C_0 (X)$-algebra $A$
have a particular property,
then so does~$A$.
These don't require continuity.
We will return to continuity later in this section,
when we want to prove that if a continuous $C_0 (X)$-algebra
with nonzero fibers has one of our properties, then $X$ is totally
disconnected.

\begin{lem}\label{L_5Z20_CXAlgHsa}
Let the notation be as in Definition~\ref{D_5Z20_CXAlg}
and Notation~\ref{N_5Z20_CXAlgNtn}.
Let $B \subset A$ be a \hsa.
Let $a \in A$.
Then $a \in B$ \ifo{} $\ev_x (a) \in \ev_x (B)$
for all $x \in X$.
\end{lem}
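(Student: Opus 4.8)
The plan is to prove the nontrivial implication; the forward direction is immediate, since $a \in B$ gives $\ev_x (a) \in \ev_x (B)$ trivially. So I would assume $\ev_x (a) \in \ev_x (B)$ for all $x \in X$ and produce, for each $\ep > 0$, an element $b \in B$ with $\| a - b \| \leq \ep$; closedness of $B$ then yields $a \in B$. The two tools I would lean on throughout are the sup-norm formula (Lemma~\ref{L_5Z20_PropOfFld}(\ref{L_5Z20_PropOfFld_norm})), the upper semicontinuity and compactness statements (Lemma~\ref{L_5Z20_PropOfFld}(\ref{L_5Z20_PropOfFld_usc}) and~(\ref{L_5Z20_PropOfFld_Cpt})), and the multiplication rule (Lemma~\ref{L_5Z20_PropOfFld}(\ref{L_5Z20_PropOfFld_mult})).

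First I would record the membership fact that $\io (f) b \in B$ for $f \in C_0 (X)$ and $b \in B$. Writing $f = g^2$ with $g \in C_0 (X)_{+}$ and using that $\io (g)$ is central in $M (A)$, we have $\io (f) b = \io (g) b \io (g)$. Approximating $b$ by $e_\lambda b e_\lambda$ for an approximate identity $(e_\lambda)$ of $B$ and commuting $\io (g)$ past the $e_\lambda$, this element is a limit of $e_\lambda [\io (g) b \io (g)] e_\lambda$, each of which lies in $B A B \subseteq B$ by the standard characterization of hereditary subalgebras. Hence $\io (f) b \in B$ for $f \geq 0$, and by linearity for all $f \in C_0 (X)$.

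Next comes the patching. Fix $\ep > 0$. For each $x \in X$ choose $b_x \in B$ with $\ev_x (b_x) = \ev_x (a)$, so $\| \ev_x (a - b_x) \| = 0$; by upper semicontinuity there is an open $U_x \ni x$ with $\| \ev_y (a - b_x) \| < \ep$ for $y \in U_x$. The set $K = \{ x \in X \colon \| \ev_x (a) \| \geq \ep \}$ is compact and is covered by the $U_x$, so I would extract a finite subcover $U_{x_1}, \ldots, U_{x_n}$ and take a partition of unity $f_1, \ldots, f_n \in C_c (X)$ with $0 \leq f_j \leq 1$, $\supp (f_j) \subset U_{x_j}$, $\sum_j f_j \leq 1$ everywhere, and $\sum_j f_j = 1$ on $K$. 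Setting $b = \sum_j \io (f_j) b_{x_j}$, the first step guarantees $b \in B$. For the estimate, the multiplication rule gives $\ev_y (b) = \sum_j f_j (y) \ev_y (b_{x_j})$; writing $s (y) = \sum_j f_j (y) \in [0,1]$ and splitting $\ev_y (a) = \sum_j f_j (y) \ev_y (a) + (1 - s (y)) \ev_y (a)$, the triangle inequality bounds $\| \ev_y (a - b) \|$ by $\sum_j f_j (y) \| \ev_y (a - b_{x_j}) \| + (1 - s (y)) \| \ev_y (a) \|$. Each nonzero term of the sum has $y \in U_{x_j}$, hence contributes less than $f_j (y) \ep$, for a total below $\ep\, s (y)$; and $(1 - s (y)) \| \ev_y (a) \| \leq (1 - s (y)) \ep$, since $s \equiv 1$ on $K$ while $\| \ev_y (a) \| < \ep$ off $K$. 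Thus $\| \ev_y (a - b) \| < \ep$ for every $y$, so $\| a - b \| \leq \ep$ by the sup-norm formula, completing the approximation.

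The main obstacle is the very first step: ensuring that the patched element remains in $B$. This is exactly where the hereditary hypothesis is essential (and where the centrality of the $C_0 (X)$-structure is used), via $B A B \subseteq B$; without it, multiplying a section of $B$ by $\io (f)$ can leave $B$. Everything after that is a routine partition-of-unity argument that is made clean precisely because the global norm is the supremum of the fiber norms.
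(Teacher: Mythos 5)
Your proposal is correct and follows essentially the same route as the paper: first establish that $\io (f) b \in B$ for $f \in C_0 (X)$ and $b \in B$, then patch local lifts $b_x$ together with a partition of unity subordinate to the sets $U_x$ covering the compact set where $\| \ev_x (a) \|$ is not small, and estimate fiberwise using Lemma~\ref{L_5Z20_PropOfFld}. The only (immaterial) difference is in the membership step, where the paper reduces to $b \geq 0$ and writes $\io (f) b = b^{1/2} \io (f) b^{1/2}$, while you use an approximate identity of $B$ and $B A B \subseteq B$.
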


\begin{proof}
The forward implication is immediate.

For the reverse implication,
we first claim that if $f \in C_0 (X)$
satisfies $0 \leq f \leq 1$
and if $b \in B$,
then $\io (f) b \in B$.
To prove the claim,
it suffices to consider the case $b \geq 0$.
In this case,
$\io (f) b = b^{1/2} \io (f) b^{1/2}$,
and the claim follows from the fact that $B$
is also a \hsa{} in $M (A)$.

To prove the result,
it is enough to prove that for every $\ep > 0$
there is $b \in B$ such that $\| a - b \| < \ep$.
So let $\ep > 0$.
Define $K \subset X$ by
\[
K = \left\{ x \in X \colon \| \ev_x (a) \| \geq \frac{\ep}{2} \right\}.
\]
For $x \in K$ choose $c_x \in B$ such that
$\ev_x (c_x) = \ev_x (a)$,
and define $U_x \subset X$ by
\[
U_x = \left\{ y \in X \colon \| \ev_y (c_x - a) \|
   < \frac{\ep}{2} \right\}.
\]
It follows from Lemma \ref{L_5Z20_PropOfFld}(\ref{L_5Z20_PropOfFld_Cpt})
that $K$ is compact and from
Lemma \ref{L_5Z20_PropOfFld}(\ref{L_5Z20_PropOfFld_usc})
that $U_x$ is open for all $x \in K$.
Choose $x_1, x_2, \ldots, x_n \in K$
such that the sets $U_{x_1}, U_{x_2}, \ldots, U_{x_n}$ cover~$K$.
Choose \cfn{s} $f_k \colon X \to [0, 1]$ with compact
support contained in $U_{x_k}$ for $k = 1, 2, \ldots, n$,
and such that for $x \in K$ we have
$\sum_{k = 1}^n f_k (x) = 1$
and for $x \in X \setminus K$ we have $\sum_{k = 1}^n f_k (x) \leq 1$.
Define $b \in A$ by
\[
b = \sum_{k = 1}^n \io (f_k) c_{x_k}.
\]
Then $b \in B$ by the claim.
Moreover,
if $x \in K$
then,
using Lemma \ref{L_5Z20_PropOfFld}(\ref{L_5Z20_PropOfFld_mult})
at the first step,
and $\| \ev_x (c_{x_k} - a) \| < \frac{\ep}{2}$
whenever $f_k (x) \neq 0$
at the second step,
we have
\[
\| \ev_x (b - a) \|
 \leq \sum_{k = 1}^n f_k (x) \| \ev_x (c_{x_k} - a) \|
 < \frac{\ep}{2}.
\]
Define $f (x) = 1 - \sum_{k = 1}^n f_k (x)$ for $x \in X$.
For $x \in X \setminus K$,
similar reasoning gives
\begin{align*}
\| \ev_x (b - a) \|
& \leq \big\| \ev_x (b - [1 - f (x)] a) \big\| + \| f (x) \ev_x (a) \|
\\
& \leq \sum_{k = 1}^n f_k (x) \| \ev_x (c_{x_k} - a) \|
          + f (x) \| \ev_x (a) \|
\\
& \leq [1 - f (x)] \frac{\ep}{2} + f (x) \| \ev_x (a) \|
  \leq \frac{\ep}{2}.
\end{align*}
It now follows from
Lemma \ref{L_5Z20_PropOfFld}(\ref{L_5Z20_PropOfFld_norm})
that $\| b - a \| < \ep$.
This completes the proof.
\end{proof}

\begin{cor}\label{C_5Z20_HSAisCXA}
Let $X$ be a locally compact Hausdorff space,
let $A$ be a $C_0 (X)$-algebra
with structure map $\io \colon C_0 (X) \to Z (M (A))$,
and let $B \subset A$ be a \hsa.
Then there is a \hm{} $\mu \colon C_0 (X) \to Z (M (B))$
which makes $B$ a $C_0 (X)$-algebra
and such that for all $b \in B$ and $f \in C_0 (X)$
we have $\mu (f) b = \io (f) b$.
Moreover, $B_x = \ev_x (B)$
for all $x \in X$.
\end{cor}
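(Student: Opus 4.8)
The plan is to realize $\mu(f)$ simply as the restriction of the central multiplier $\io(f)$ to $B$, so that the whole statement reduces to checking that this restriction makes sense and has the required properties. The one fact that makes everything work is the claim, already used inside the proof of Lemma~\ref{L_5Z20_CXAlgHsa}, that $\io(f) B \subset B$ for every $f \in C_0(X)$. First I would verify this claim: for $0 \le f \le 1$ and $b \in B$ with $b \ge 0$ one has $\io(f) b = b^{1/2} \io(f) b^{1/2}$, which lies in $A$ because $A$ is an ideal in $M(A)$, and which satisfies $0 \le \io(f) b \le b$ because $0 \le \io(f) \le 1$; heredity of $B$ in $A$ then forces $\io(f) b \in B$. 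Writing an arbitrary $f \in C_0(X)$ as a linear combination of four functions with values in $[0,1]$ and an arbitrary $b \in B$ as a linear combination of four positive elements of $B$, the claim follows in general.

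Given the claim, I would define $\mu(f)$ to be the multiplier of $B$ given by left (equivalently right) multiplication by $\io(f)$; this lands in $B$ by the claim, and the left and right versions agree because $\io(f)$ is central in $M(A)$, so $\mu(f) \in M(B)$ and $\mu(f) b = \io(f) b$ as required. Since $\io(f)$ commutes with all of $A \supset B$, the multiplier $\mu(f)$ commutes with every element of $B$ and is therefore central in $M(B)$; and $f \mapsto \mu(f)$ is a homomorphism because $\io$ is. For nondegeneracy, I would take an approximate identity $(e_\ld)$ for $C_0(X)$ with $0 \le e_\ld \le 1$; nondegeneracy of $\io$ makes $(\io(e_\ld))$ an approximate identity for $A$, so $\io(e_\ld) b \to b$ for every $b \in B$, and since each $\io(e_\ld) b \in \mu(C_0(X)) B$ this gives $\ov{\mu(C_0(X)) B} = B$. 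Thus $B$ is a $C_0(X)$-algebra via $\mu$.

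For the fiber identification I would fix $x \in X$, set $J = \ov{\io(C_0(X \setminus \{x\})) A} = \Ker(\ev_x)$, so that $\ev_x(B) \cong B / (B \cap J)$, and then show that the defining ideal of the fiber, namely $\ov{\mu(C_0(X \setminus \{x\})) B} = \ov{\io(C_0(X \setminus \{x\})) B}$, equals $B \cap J$. The inclusion $\subset$ is immediate, since each $\io(f) b$ with $f \in C_0(X \setminus \{x\})$ lies in $B$ (the claim) and in $J$. For the reverse inclusion I would run the approximate-identity argument once more: choosing an approximate identity $(e_\ld)$ for $C_0(X \setminus \{x\})$, the same reasoning as for $A$ shows $(\io(e_\ld))$ is an approximate identity for the ideal $J$, so for $b \in B \cap J$ we have $\io(e_\ld) b \to b$ with each $\io(e_\ld) b \in \io(C_0(X \setminus \{x\})) B$; hence $b \in \ov{\io(C_0(X \setminus \{x\})) B}$. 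This yields $B_x = B / (B \cap J) \cong \ev_x(B)$.

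The only step with any real content is the approximate-identity argument, and the point to be careful about is that $(\io(e_\ld))$ is genuinely an approximate identity for the relevant ideal (for $A$ in the nondegeneracy step, for $J$ in the fiber step): one checks it on the dense subspace $\io(C_0(\cdot)) A$ using $\io(e_\ld) \io(g) = \io(e_\ld g) \to \io(g)$, and then extends to the closure using $\| \io(e_\ld) \| \le 1$. Everything else is formal once the inclusion $\io(f) B \subset B$ is in hand, so I expect no serious obstacle.
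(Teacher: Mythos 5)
Your proposal is correct and follows essentially the same route as the paper: establish $\io (f) B \subset B$ (the paper cites the claim inside the proof of Lemma~\ref{L_5Z20_CXAlgHsa}, which is exactly your heredity argument), define $\mu (f)$ as the double centralizer $(T_f, T_f)$ with $T_f (b) = \io (f) b$, deduce nondegeneracy of $\mu$ from that of~$\io$, and read off $B_x = \ev_x (B)$. You merely supply details the paper leaves implicit, notably the approximate-identity verification that $\ov{\io (C_0 (X \setminus \{ x \})) B} = B \cap \Ker (\ev_x)$, which the paper dismisses as holding ``by construction.''
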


\begin{proof}
It follows from Lemma~\ref{L_5Z20_CXAlgHsa}
that if $f \in C_0 (X)$ and $b \in B$
then $\io (f) b \in B$.
For $f \in C_0 (X)$
we define $T_f \colon B \to B$ by $T_f (b) = \io (f) b$
for $b \in B$.
It is easy to check that $(T_f, T_f)$
is a double centralizer of~$B$,
and that $f \mapsto (T_f, T_f)$
defines a \hm{} $\mu \colon C_0 (X) \to Z (M (B))$.
Nondegeneracy of $\mu$ follows from nondegeneracy of $\io$.
The relations $\mu (f) b = \io (f) b$
and $B_x = \ev_x (B)$ hold by construction.
\end{proof}

\begin{lem}\label{L_5Z20_AppId}
Let $X$ be a locally compact Hausdorff space,
let $A$ be a $C_0 (X)$-algebra
with structure map $\io \colon C_0 (X) \to Z (M (A))$,
let $F \subset A$ be a finite set,
and let $\ep > 0$.
Then there is $f \in C_{\mathrm{c}} (X)$
such that $0 \leq f \leq 1$
and $\| \io (f) a - a \| < \ep$
for all $a \in F$.
\end{lem}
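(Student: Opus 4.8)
The plan is to show that $\io$ carries an approximate identity of $C_0(X)$ to an approximate identity of~$A$, and then to select a single member of this approximate identity that works simultaneously for the finitely many elements of~$F$. This reduces the lemma to the standard fact that nondegeneracy of $\io$ (Definition~\ref{D_5Z20_CXAlg}) lets functions in $C_0(X)$ approximate the unit of $M(A)$ strictly on~$A$.

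First I would fix an approximate identity $(e_\lambda)_{\lambda \in \Lambda}$ for $C_0(X)$ consisting of functions $e_\lambda \in C_{\mathrm{c}}(X)$ with $0 \leq e_\lambda \leq 1$; such nets exist for any locally compact Hausdorff space, for instance indexed by the compact subsets $K \subset X$, taking $e_K \in C_{\mathrm{c}}(X)$ with $0 \leq e_K \leq 1$ and $e_K = 1$ on~$K$. Since $\io$ is a $*$-homomorphism, each $\io(e_\lambda)$ is a positive element of $M(A)$ with $\| \io(e_\lambda) \| \leq 1$.

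Next I would prove that $\io(e_\lambda) a \to a$ for every $a \in A$. For $a$ of the special form $a = \io(g) c$ with $g \in C_0(X)$ and $c \in A$, this follows from $\io(e_\lambda) \io(g) = \io(e_\lambda g)$ together with $e_\lambda g \to g$ in $C_0(X)$. The general case follows by an $\ep/3$ argument: nondegeneracy of $\io$ guarantees that elements of the form $\io(g) c$ span a dense subspace of~$A$, and the uniform bound $\| \io(e_\lambda) \| \leq 1$ controls the error terms.

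Finally, given the finite set $F$ and $\ep > 0$, I would choose one index $\lambda$ with $\| \io(e_\lambda) a - a \| < \ep$ for all $a \in F$ simultaneously, which is possible because $F$ is finite and the net is eventually within $\ep$ of each element, and set $f = e_\lambda$. Then $f \in C_{\mathrm{c}}(X)$, $0 \leq f \leq 1$, and $\| \io(f) a - a \| < \ep$ for all $a \in F$, as required. The only step requiring any care is verifying that a $C_0(X)$-approximate identity maps to an approximate identity for~$A$, which is exactly where nondegeneracy of $\io$ enters; everything else is routine.
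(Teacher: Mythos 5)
Your proof is correct, but it follows a genuinely different route from the paper's. You work abstractly: nondegeneracy of $\io$ means $\io (C_0 (X)) A$ has dense linear span, a bounded approximate identity $(e_\ld)$ of $C_{\mathrm{c}} (X)$-functions then satisfies $\io (e_\ld) a \to a$ for all $a \in A$ (first on elements $\io (g) c$, then in general by an $\ep/3$ argument using $\| \io (e_\ld) \| \leq 1$), and finiteness of $F$ lets you pick one index. This is the standard fact that a nondegenerate homomorphism carries approximate identities to approximate identities, and every step you cite is routine and valid. The paper instead argues fiberwise: it sets $K = \big\{ x \in X \colon {\mbox{there is }} a \in F {\mbox{ with }} \| \ev_x (a) \| \geq \ep / 3 \big\}$, which is compact by Lemma \ref{L_5Z20_PropOfFld}(\ref{L_5Z20_PropOfFld_Cpt}), chooses $f \in C_{\mathrm{c}} (X)$ with $0 \leq f \leq 1$ and $f = 1$ on~$K$, and checks $\| \ev_x (\io (f) a - a) \|$ is zero on $K$ and at most $2 \ep / 3$ off~$K$, concluding via the norm formula of Lemma \ref{L_5Z20_PropOfFld}(\ref{L_5Z20_PropOfFld_norm}). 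Your argument is more self-contained (it needs only nondegeneracy, not the fiber machinery) and would apply verbatim to any nondegenerate homomorphism $C_0 (X) \to M (A)$, not necessarily central; the paper's version fits the fiberwise style of the surrounding section and produces an $f$ explicitly adapted to the sets where the elements of $F$ have large fibers, which is the form reused in the proof of Lemma~\ref{L_5Z20_AppIdAtx}, where the support of $f$ must additionally avoid a point. Either proof is acceptable here.
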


\begin{proof}
Define $K \subset X$ by
\[
K = \big\{ x \in X \colon {\mbox{there is $a \in F$ such
      that $\| \ev_x (a) \| \geq \frac{\ep}{3}$}} \big\}.
\]
It follows from Lemma \ref{L_5Z20_PropOfFld}(\ref{L_5Z20_PropOfFld_Cpt})
that $K$ is compact.
Choose $f \in C_{\mathrm{c}} (X)$
such that $0 \leq f \leq 1$ and $f (x) = 1$ for all $x \in K$.

Fix $a \in F$.
Let $x \in X$.
If $x \in K$
then,
using Lemma \ref{L_5Z20_PropOfFld}(\ref{L_5Z20_PropOfFld_mult}),
$\| \ev_x ( \io (f) a - a ) \| = 0$.
Otherwise,
again using Lemma \ref{L_5Z20_PropOfFld}(\ref{L_5Z20_PropOfFld_mult}),
\[
\| \ev_x ( \io (f) a - a ) \|
 \leq f (x) \| \ev_x (a) \| + \| \ev_x (a) \|
 < \frac{\ep}{3} + \frac{\ep}{3}
 = \frac{2 \ep}{3}.
\]
Clearly
$\sup_{x \in X} \| \ev_x (\io (f) a - a) \| \leq \frac{2 \ep}{3} < \ep$.
So $\| \io (f) a - a \| < \ep$
by Lemma \ref{L_5Z20_PropOfFld}(\ref{L_5Z20_PropOfFld_norm}).
\end{proof}

\begin{lem}\label{L_5Z20_AppIdAtx}
Let $X$ be a locally compact Hausdorff space,
let $A$ be a $C_0 (X)$-algebra
with structure map $\io \colon C_0 (X) \to Z (M (A))$,
let $z \in X$,
let $F \subset {\operatorname{Ker}} (\ev_z)$ be a finite set,
and let $\ep > 0$.
Then there is $f \in C_{\mathrm{c}} (X \setminus \{ z \})$
such that $0 \leq f \leq 1$
and $\| \io (f) a - a \| < \ep$
for all $a \in F$.
\end{lem}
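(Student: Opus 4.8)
The plan is to follow the proof of \Lem{L_5Z20_AppId} almost verbatim, changing only the choice of the cutoff function so that it vanishes near~$z$. The one genuinely new ingredient is the observation that, since every $a \in F$ lies in $\Ker (\ev_z)$, we have $\| \ev_z (a) \| = 0$ for all $a \in F$. First I would set
\[
K = \big\{ x \in X \colon {\mbox{there is $a \in F$ with $\| \ev_x (a) \| \geq \tfrac{\ep}{3}$}} \big\}.
\]
For each fixed $a \in F$, the set $\{ x \in X \colon \| \ev_x (a) \| \geq \ep / 3 \}$ is compact by Lemma \ref{L_5Z20_PropOfFld}(\ref{L_5Z20_PropOfFld_Cpt}), and $K$ is the union of these finitely many sets, hence compact. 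The crucial point is that $z \notin K$, since $\| \ev_z (a) \| = 0 < \ep / 3$ for every $a \in F$.

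Next I would apply Urysohn's lemma for locally compact Hausdorff spaces: as $K$ is compact, $X \setminus \{ z \}$ is open, and $K \subset X \setminus \{ z \}$, there is $f \in C_{\mathrm{c}} (X)$ with $0 \leq f \leq 1$, with $f = 1$ on~$K$, and with $\supp (f) \subset X \setminus \{ z \}$; thus $f \in C_{\mathrm{c}} (X \setminus \{ z \})$, as required. The norm estimate is then identical to the one in \Lem{L_5Z20_AppId}: for $a \in F$ and $x \in X$, Lemma \ref{L_5Z20_PropOfFld}(\ref{L_5Z20_PropOfFld_mult}) gives $\ev_x (\io (f) a - a) = (f (x) - 1) \ev_x (a)$, which is~$0$ when $x \in K$ (because $f (x) = 1$ there) and has norm at most $\| \ev_x (a) \| < \ep / 3$ when $x \notin K$. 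Taking the supremum over $x \in X$ and applying Lemma \ref{L_5Z20_PropOfFld}(\ref{L_5Z20_PropOfFld_norm}) then yields $\| \io (f) a - a \| \leq \ep / 3 < \ep$ for all $a \in F$.

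I do not expect any real obstacle, since all the estimates have already been carried out in \Lem{L_5Z20_AppId}. The only new step is recording that $z \notin K$, which is precisely what lets the Urysohn function be chosen with support disjoint from~$z$.
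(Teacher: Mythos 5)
Your proposal is correct and is essentially identical to the paper's proof, which likewise reuses the set $K$ from Lemma~\ref{L_5Z20_AppId}, observes that $z \notin K$ because $F \subset \Ker(\ev_z)$, and requires $\supp(f) \subset X \setminus \{z\}$. The only (harmless) difference is cosmetic: your estimate $\|(f(x)-1)\ev_x(a)\| \leq \|\ev_x(a)\| < \ep/3$ off $K$ is slightly sharper than the $2\ep/3$ bound obtained in the proof of Lemma~\ref{L_5Z20_AppId}.
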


\begin{proof}
The proof is essentially the same as that of Lemma~\ref{L_5Z20_AppId}.
We define $K$ as there,
observe that $z \not\in K$,
and require that $\supp (f)$,
in addition to being compact,
be contained in $X \setminus \{ z \}$.
\end{proof}

\begin{lem}\label{L_5Z20_QuotCX}
Let $X$ be a locally compact Hausdorff space,
let $A$ be a $C_0 (X)$-algebra
with structure map $\io \colon C_0 (X) \to Z (M (A))$,
and let $I \subset A$ be an ideal.
Let $\pi \colon A \to A / I$
be the quotient map.
Then there is a \hm{} $\mu \colon C_0 (X) \to Z (M (A / I))$
which makes $A / I$
a $C_0 (X)$-algebra
and such that for all $a \in A$ and $f \in C_0 (X)$
we have $\mu (f) \pi (a) = \pi (\io (f) a)$.
Moreover,
giving $I$ the $C_0 (X)$-algebra
from Corollary~\ref{C_5Z20_HSAisCXA},
for every $x \in X$ we have $(A / I)_x \cong A_x / I_x$.
\end{lem}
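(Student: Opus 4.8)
The plan is to mimic the construction in the proof of Corollary~\ref{C_5Z20_HSAisCXA}, replacing the \hsa{} there by the ideal~$I$, and then to identify both candidate fibers with a common quotient of~$A$.

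For the first part, I would observe that since $I$ is an ideal in~$A$ and $\io (f) \in M (A)$, multiplication by $\io (f)$ carries $I$ into~$I$; hence for $f \in C_0 (X)$ the formula $T_f (\pi (a)) = \pi (\io (f) a)$ gives a well defined map $T_f \colon A / I \to A / I$. Because $\io (f)$ lies in $Z (M (A))$, one checks that $(T_f, T_f)$ is a double centralizer of $A / I$ and that $f \mapsto (T_f, T_f)$ defines a \hm{} $\mu \colon C_0 (X) \to M (A / I)$ with image in $Z (M (A / I))$ (centrality because $\mu (f)$ commutes with the dense subalgebra $A / I$). The relation $\mu (f) \pi (a) = \pi (\io (f) a)$ holds by definition, and nondegeneracy of~$\mu$ follows from nondegeneracy of~$\io$ together with surjectivity of~$\pi$. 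This part is routine and essentially identical to Corollary~\ref{C_5Z20_HSAisCXA}.

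The substance is in identifying the fibers. I would write $J = \Ker (\ev_x) = \ov{\io (C_0 (X \setminus \{ x \})) A}$, so that $A_x = A / J$. Since a closed two sided ideal is in particular a \hsa, Corollary~\ref{C_5Z20_HSAisCXA} gives $I_x = \ev_x (I) = (I + J) / J$, whence $A_x / I_x \cong A / (I + J)$ by the third isomorphism theorem; here $I + J$ is a closed ideal because the sum of two closed ideals in a C*-algebra is closed. On the other side, using $\mu (f) \pi (a) = \pi (\io (f) a)$, I would show that
\[
\ov{\mu (C_0 (X \setminus \{ x \})) (A / I)} = \pi (J) = (J + I) / I .
\]
The point is that the image $\pi (J)$ of the closed ideal~$J$ under the surjection~$\pi$ is already closed, so no genuine interchange of closure and~$\pi$ is needed: $\pi (J)$ contains $\mu (C_0 (X \setminus \{ x \})) (A / I)$, and therefore its closure, while continuity of~$\pi$ gives the reverse inclusion. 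Consequently
\[
(A / I)_x
 = (A / I) \big/ \ov{\mu (C_0 (X \setminus \{ x \})) (A / I)}
 \cong A / (J + I) ,
\]
and comparing the two computations yields the canonical isomorphism $(A / I)_x \cong A_x / I_x$.

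The main obstacle is precisely the small closure argument in the fiber identification: one must be sure that $\ov{\mu (C_0 (X \setminus \{ x \})) (A / I)}$ equals $\pi (J)$ and not something larger, and that the various sums of ideals are closed so that the isomorphism theorems apply cleanly. Once that bookkeeping is settled, everything reduces to the single observation that both $(A / I)_x$ and $A_x / I_x$ are canonically isomorphic to $A / (I + J)$.
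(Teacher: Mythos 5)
Your proof is correct, but the fiber identification takes a genuinely different route from the paper's. The paper defines $\mu$ as ${\ov{\pi}} \circ \io$ via the induced map on multiplier algebras, and then proves that $\pi_x \circ \ev_x$ and ${\ov{\ev}}_x \circ \pi$ have equal kernels by two $\ep$-arguments, each invoking the approximate-identity lemma (Lemma~\ref{L_5Z20_AppIdAtx}) to pass between an element of an ideal and its compression by $\io (f)$ with $f$ supported off~$x$. You instead reduce everything to algebra: writing $J = \Ker (\ev_x) = {\ov{\io (C_0 (X \setminus \{ x \})) A}}$, you identify $I_x = \ev_x (I) = (I + J)/J$ and ${\ov{\mu (C_0 (X \setminus \{x\})) (A/I)}} = \pi (J) = (J + I)/I$, so that both $(A/I)_x$ and $A_x / I_x$ become $A / (I + J)$ by the third isomorphism theorem. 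The two facts you lean on --- that the image of a closed ideal under a surjective $*$-homomorphism is closed, and that the sum of two closed ideals is closed --- are exactly what replaces the paper's $\ep$-chasing, and your verification that ${\ov{\mu (C_0 (X \setminus \{x\})) (A/I)}} = \pi (J)$ (containment one way because $\pi (J)$ is closed, the other by continuity of $\pi$) is the only place where analysis enters. The resulting isomorphism is the same canonical one as in the paper, since $\Ker (\pi_x \circ \ev_x) = I + J = \Ker ({\ov{\ev}}_x \circ \pi)$. Your argument is shorter and more transparent for ideals; the paper's approximation technique has the advantage of being uniform with the surrounding lemmas and of not requiring $I$ to be an ideal at the point where sums are formed, but for the statement at hand your route is cleaner. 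One small remark: your one-line justification of nondegeneracy (from ${\ov{\mu (C_0 (X)) (A/I)}} \supset \pi \big( {\ov{\io (C_0 (X)) A}} \big) = A/I$ by continuity of $\pi$) is also simpler than the paper's use of Lemma~\ref{L_5Z20_AppId}, and is perfectly valid.
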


\begin{proof}
Let ${\overline{\pi}} \colon M (A) \to M (A/I)$
be the map on multiplier algebras induced by $\pi \colon A \to A / I$.
Define $\mu = {\overline{\pi}} \circ \io$.
All required properties of $\mu$ are obvious except for nondegeneracy.

To prove nondegeneracy,
let $b \in A / I$ and let $\ep > 0$.
Choose $a \in A$ such that $\pi (a) = b$.
Use Lemma~\ref{L_5Z20_AppId}
to choose $f \in C_{\mathrm{c}} (X)$
such that $0 \leq f \leq 1$
and $\| \io (f) a - a \| < \ep$.
Then
\[
\| \mu (f) b - b \|
 = \big\| \pi \big( \io (f) a - a \big) \big\|
 < \ep.
\]
This completes the proof of nondegeneracy.

It remains to prove the last statement.
Let $x \in X$.
Let $\ev_x \colon A \to A_x$
be as in Notation~\ref{N_5Z20_CXAlgNtn},
and let ${\ov{\ev}}_x \colon A / I \to (A / I)_x$
be the corresponding map with $A / I$ in place of~$A$.
Also let $\pi_x \colon A_x \to A_x / I_x$ be the quotient map.
Then $\pi_x \circ \ev_x$ and ${\ov{\ev}}_x \circ \pi$
are surjective,
so it suffices to show that they have the same kernel.

Let $a \in A$.
Suppose first $(\pi_x \circ \ev_x) (a) = 0$.
Let $\ep > 0$;
we prove that $\| ({\ov{\ev}}_x \circ \pi) (a) \| < \ep$.
We have $\ev_x (a) \in I_x$.
So there is $b \in I$ such that $\ev_x (b) = \ev_x (a)$.
Then $\ev_x (a - b) = 0$.
So Lemma~\ref{L_5Z20_AppIdAtx} provides
$f \in C_{\mathrm{c}} (X \setminus \{ x \})$
such that $0 \leq f \leq 1$
and $\| \io (f) (a - b) - (a - b) \| < \ep$.
By Corollary~\ref{C_5Z20_HSAisCXA},
we have $\io (f) b \in I$.
So $\pi (b) = \pi (\io (f) b) = 0$.
Thus
\[
\| \mu (f) \pi (a) - \pi (a) \|
 = \big\| \pi \big( \io (f) (a - b) - (a - b) \big) \big\|
 < \ep.
\]
Since ${\ov{\ev}}_x ( \mu (f) \pi (a) ) = 0$,
it follows that $\| ({\ov{\ev}}_x \circ \pi) (a) \| < \ep$.

Now assume that $({\ov{\ev}}_x \circ \pi) (a) = 0$.
Let $\ep > 0$;
we prove that $\| (\pi_x \circ \ev_x) (a) \| < \ep$.
Apply Lemma~\ref{L_5Z20_AppIdAtx} to the $C_0 (X)$-algebra $A / I$,
getting $f \in C_{\mathrm{c}} (X \setminus \{ x \})$
such that $0 \leq f \leq 1$
and $\| \mu (f) \pi (a) - \pi (a) \| < \ep$.
Thus $\| \pi (\io (f) a - a) \| < \ep$.
Choose $b \in I$ such that
$\big\| [\io (f) a - a ] - b \big\| < \ep$.
It follows that
\[
\big\| (\pi_x \circ \ev_x) \big( \io (f) a - a - b \big) \big\| < \ep.
\]
Since $\ev_x (\io (f) a) = 0$
and $(\pi_x \circ \ev_x) (b) = 0$,
it follows that $\| (\pi_x \circ \ev_x) (a) \| < \ep$,
as desired.
\end{proof}

The following result is closely related to~\cite{KW1}.

\begin{lem}\label{L_5Z20_TPCX}
Let $X$ be a locally compact Hausdorff space,
let $A$ be a $C_0 (X)$-algebra
with structure map $\io \colon C_0 (X) \to Z (M (A))$,
and let $D$ be a C*-algebra.
Then there is a \hm{}
$\mu \colon C_0 (X) \to Z (M (D \otimes_{\mathrm{max}} A))$
which makes $D \otimes_{\mathrm{max}} A$
a $C_0 (X)$-algebra
and such that for all $a \in A$, $d \in D$, and $f \in C_0 (X)$
we have $\mu (f) (d \otimes a) = d \otimes \io (f) a$.
Moreover,
for every $x \in X$ we have
$(D \otimes_{\mathrm{max}} A)_x \cong D \otimes_{\mathrm{max}} A_x$.
\end{lem}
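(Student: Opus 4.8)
The plan is to build $\mu$ by extending the structure map $\io$ through the canonical embedding of $A$ into the multiplier algebra of the tensor product, and then to identify the fibers using exactness of the maximal tensor product.

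First I would recall the two canonical nondegenerate \hm{s}
\[
\ld_D \colon D \to M(D \otimes_{\mathrm{max}} A)
\andeqn
\ld_A \colon A \to M(D \otimes_{\mathrm{max}} A),
\]
with commuting ranges and satisfying $\ld_D(d)\, \ld_A(a) = d \otimes a$. Since $\ld_A$ is nondegenerate, it extends uniquely to a strictly continuous \hm{} $\ov{\ld_A} \colon M(A) \to M(D \otimes_{\mathrm{max}} A)$, and I would define $\mu = \ov{\ld_A} \circ \io$. The required identity $\mu(f)(d \otimes a) = d \otimes \io(f) a$ is then immediate from $\ov{\ld_A}(\io(f)) \ld_A(a) = \ld_A(\io(f) a)$ and $\ld_D(d)\, \ld_A(\io(f) a) = d \otimes \io(f) a$. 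For centrality, note that $\io(f) \in Z(M(A))$ commutes with $A$, so $\mu(f)$ commutes with $\ld_A(A)$; it commutes with $\ld_D(D)$ because the two ranges commute; hence $\mu(f)$ commutes with all of $D \otimes_{\mathrm{max}} A$, and a multiplier that commutes with the whole algebra is automatically central in the multiplier algebra. Nondegeneracy of $\mu$ follows from that of $\io$: by \Lem{L_5Z20_AppId} the elements $\io(f)$ act as an approximate identity on $A$, so the elements $d \otimes \io(f) a = \mu(f)(d \otimes a)$ are dense in $D \otimes_{\mathrm{max}} A$.

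For the fiber identification, fix $x \in X$, set $U = X \setminus \{ x \}$, and let $J = \Ker(\ev_x) = \ov{\io(C_0(U)) A}$, so that $A_x = A / J$. Applying Notation~\ref{N_5Z20_CXAlgNtn} to the $C_0(X)$-algebra $D \otimes_{\mathrm{max}} A$ just constructed, the kernel of the evaluation map $\ov{\ev}_x$ is the ideal $\ov{\mu(C_0(U)) (D \otimes_{\mathrm{max}} A)}$. I would show this ideal coincides with the image of $D \otimes_{\mathrm{max}} J$ in $D \otimes_{\mathrm{max}} A$. One inclusion is clear, since $\mu(f)(d \otimes a) = d \otimes \io(f) a \in D \otimes_{\mathrm{max}} J$ for $f \in C_0(U)$. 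For the reverse, a generator $d \otimes b$ of $D \otimes_{\mathrm{max}} J$ has $b \in J$ approximable by finite sums $\sum_i \io(f_i) a_i$ with $f_i \in C_0(U)$, whence $d \otimes b$ is approximable by $\sum_i \mu(f_i)(d \otimes a_i)$. Thus $\Ker(\ov{\ev}_x)$ is exactly the image of $D \otimes_{\mathrm{max}} J$.

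Finally I would invoke exactness of the maximal tensor product: applying $D \otimes_{\mathrm{max}} (-)$ to the exact sequence $0 \to J \to A \to A_x \to 0$ produces an exact sequence
\[
0 \longrightarrow D \otimes_{\mathrm{max}} J
  \longrightarrow D \otimes_{\mathrm{max}} A
  \longrightarrow D \otimes_{\mathrm{max}} A_x
  \longrightarrow 0,
\]
so that $(D \otimes_{\mathrm{max}} A) / (D \otimes_{\mathrm{max}} J) \cong D \otimes_{\mathrm{max}} A_x$. Combining this with the identification of $\Ker(\ov{\ev}_x)$ yields $(D \otimes_{\mathrm{max}} A)_x \cong D \otimes_{\mathrm{max}} A_x$. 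The hard part—and the reason the maximal rather than the minimal tensor product is used here—is precisely the injectivity of $D \otimes_{\mathrm{max}} J \to D \otimes_{\mathrm{max}} A$, i.e.\ the exactness of $\otimes_{\mathrm{max}}$, which fails for $\otimes_{\mathrm{min}}$ in general; everything else is a routine verification of multiplier-algebra identities.
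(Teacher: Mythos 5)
Your proposal is correct, and it reaches the conclusion by a genuinely different route from the paper. You build the structure map directly, as $\mu = \ov{\ld_A} \circ \io$ for the strict extension of the canonical nondegenerate embedding $\ld_A \colon A \to M(D \otimes_{\mathrm{max}} A)$; since Definition~\ref{D_5Z20_CXAlg} only requires a nondegenerate homomorphism into $Z(M(D \otimes_{\mathrm{max}} A))$, your centrality and nondegeneracy checks already make $D \otimes_{\mathrm{max}} A$ a $C_0(X)$-algebra, with upper semicontinuity of $x \mapsto \| \ev_x (b) \|$ then coming for free from Lemma~\ref{L_5Z20_PropOfFld}. You then identify $\Ker ({\ov{\ev}}_x)$ with the image of $D \otimes_{\mathrm{max}} \Ker (\ev_x)$ and invoke exactness of $\otimes_{\mathrm{max}}$ to compute the fiber. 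The paper instead works entirely on the bundle side: it forms the family $\big( X, \, (\id_D \otimes_{\mathrm{max}} \pi_x)_{x \in X}, \, D \otimes_{\mathrm{max}} A \big)$, uses exactness of the maximal tensor product together with Lemma~\ref{L_5Z20_AppIdAtx} to verify the hypothesis of Lemma~2.3 of~\cite{KW1} (yielding upper semicontinuity), checks vanishing at infinity on elementary tensors, and then converts back to a $C_0 (X)$-algebra via Proposition~\ref{P_5Z31_Identify}, with the fiber identification built into that dictionary. Both arguments hinge on the same key ingredient --- exactness of $\otimes_{\mathrm{max}}$, which is exactly why $\otimes_{\mathrm{min}}$ is avoided --- but yours uses it directly and is more self-contained, while the paper's reuses the bundle machinery it has already set up. The only step you should spell out is that the strict extension $\ov{\ld_A}$ still has range commuting with the image of $D$ (or, equivalently, rewrite $d \otimes a$ as $\ld_A (a) \ld_D (d)$ before applying $\mu (f)$); this is routine but is used in your verification of the formula $\mu (f) (d \otimes a) = d \otimes \io (f) a$.
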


\begin{proof}
The family
\[
\big( X,
 (\id_D \otimes_{\mathrm{max}} \pi_x \colon A \to A_x)_{x \in X},
 D \otimes_{\mathrm{max}}A \big)
\]
is a C*-bundle
in the sense of Definition~1.1 of~\cite{KW1}.
(See (2) on page~678 of~\cite{KW1}.)

Using exactness of the maximal tensor product
and Lemma~\ref{L_5Z20_AppIdAtx},
one verifies the hypothesis of Lemma~2.3 of~\cite{KW1}.
This lemma therefore implies that
for $b \in D \otimes_{\mathrm{max}} A$
the function $x \mapsto \| \ev_x (b) \|$
is upper semicontinuous.
It is clear that
for $d \in D$ and $a \in A$
the function $x \mapsto \| \ev_x (d \otimes a) \|$
vanishes at infinity,
and it then follows from density
that for all $b \in D \otimes_{\mathrm{max}} A$
the function $x \mapsto \| \ev_x (b) \|$
vanishes at infinity.
Now apply
Proposition~\ref{P_5Z31_Identify}.
\end{proof}

\begin{lem}\label{L_5Z20_IfDcPj}
Let $X$ be a totally disconnected locally compact Hausdorff space,
let $A$ be a $C_0 (X)$-algebra
with structure map $\io \colon C_0 (X) \to Z (M (A))$,
and let $x \in X$.
\begin{enumerate}
\item\label{L_5Z20_IfDcPj_Pj}
Let $p \in A_x$ be a \pj.
Then there is a \pj{} $e \in A$
such that $\ev_x (e) = p$.
\item\label{L_5Z20_IfDcPj_Inf}
Let $p \in A_x$ be an infinite \pj.
Then there is an infinite \pj{} $e \in A$
such that $\ev_x (e) = p$.
\end{enumerate}
\end{lem}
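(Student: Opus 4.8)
The plan is to prove both parts by a standard lifting argument for projections in $C_0(X)$-algebras over totally disconnected spaces, using total disconnectedness to produce a compact open neighborhood of~$x$ on which a lifted self-adjoint element has spectrum bounded away from~$\frac{1}{2}$. First I would lift $p$ to a self-adjoint element. Choose $a \in A_{\mathrm{sa}}$ with $\ev_x(a) = p$ (such $a$ exists since $\ev_x$ is surjective and we may replace any lift by its self-adjoint part). Since $\ev_x(a^2 - a) = p^2 - p = 0$, the upper semicontinuity in Lemma~\ref{L_5Z20_PropOfFld}(\ref{L_5Z20_PropOfFld_usc}) shows that the set $W = \big\{ y \in X \colon \| \ev_y(a^2 - a) \| < \tfrac{1}{8} \big\}$ is an open neighborhood of~$x$.

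Next I would use total disconnectedness of~$X$ to choose a compact open subset $Y \subset X$ with $x \in Y \subset W$; this is exactly the content of Definition~\ref{D_5Y27_TDZero311} for the (Hausdorff, hence honestly topological-dimension-zero) space~$X$. Restricting to~$Y$ via Notation~\ref{N_5Z20_CXAlgNtn}, the restriction $A|_Y$ is again a $C_0(Y)$-algebra, and because $Y$ is compact and open the structure map sends the characteristic function of~$Y$ to a central projection; concretely, $A|_Y$ is a direct summand of~$A$, say $A \cong A|_Y \oplus A|_{X \setminus Y}$ as the decomposition induced by $1_Y \in C_0(X)$ (here $1_Y$ is continuous precisely because $Y$ is clopen). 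Let $b \in A|_Y$ be the image of~$a$. Then $\| \ev_y(b^2 - b) \| \leq \tfrac{1}{8}$ for all $y \in Y$, so by Lemma~\ref{L_5Z20_PropOfFld}(\ref{L_5Z20_PropOfFld_norm}) we get $\| b^2 - b \| \leq \tfrac{1}{8} < \tfrac{1}{4}$, which forces $\tfrac{1}{2} \notin \spec(b)$. Hence $e_0 = \ch_{(\frac{1}{2}, \I)}(b)$ is a projection in $A|_Y$, and viewing it inside $A$ via the direct summand embedding gives a projection $e \in A$. Since $\ev_x$ factors through the summand $A|_Y$ and $\ev_x(b) = p$ is already a projection, functional calculus commutes with $\ev_x$ to give $\ev_x(e) = \ch_{(\frac{1}{2}, \I)}(p) = p$, proving~(\ref{L_5Z20_IfDcPj_Pj}).

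For part~(\ref{L_5Z20_IfDcPj_Inf}), I would apply part~(\ref{L_5Z20_IfDcPj_Pj}) to obtain a projection $e \in A$ with $\ev_x(e) = p$, and then improve $e$ to an infinite projection by lifting a witness to infiniteness. Since $p$ is infinite, there is a projection $p' < p$ in $A_x$ and a partial isometry $v \in A_x$ with $v^*v = p$ and $vv^* = p' < p$. The goal is to lift $v$ to a partial isometry $s$ in the corner $eAe$ (possibly after again shrinking to a compact open neighborhood of~$x$ and passing to a direct summand) so that $\ev_x(s) = v$, $s^*s = e$, and $ss^* $ is a proper subprojection of~$e$; then $e$ is infinite. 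The main obstacle will be this lifting of the partial isometry together with the strict subprojection relation: lifting the two relevant projections is handled by part~(\ref{L_5Z20_IfDcPj_Pj}), but arranging a genuine partial isometry lift requires a functional-calculus perturbation argument (lift $v$ to some $a$, shrink to a clopen $Y$ where $\| \ev_y(a^*a - e) \|$ and $\| \ev_y(aa^* \text{ subordinate to } e) \|$ are small, then polar-decompose inside the direct summand $A|_Y$), and one must verify that the resulting $ss^*$ is strictly below~$e$ at the fiber over~$x$ and hence strictly below it in the corner. I would expect this perturbation-and-polar-decomposition step to be the technical heart of the argument, with everything else reducing to the clopen-summand trick already used in part~(\ref{L_5Z20_IfDcPj_Pj}).
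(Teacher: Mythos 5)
Part~(\ref{L_5Z20_IfDcPj_Pj}) of your argument is correct and is essentially the paper's proof: the paper also lifts $p$ to $a \in A$, uses upper semicontinuity to find an open $U \ni x$ controlling $\| \ev_y (a^2 - a) \|$ (and $\| \ev_y (a^* - a) \|$), cuts down by $\io (\ch_K)$ for a compact open $K$ with $x \in K \subset U$, and then produces the projection. The only cosmetic difference is that the paper packages the final step as semiprojectivity of $\C$ while you run the functional calculus $\ch_{(1/2, \I)}$ explicitly in the clopen summand; these are the same argument.

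Part~(\ref{L_5Z20_IfDcPj_Inf}) is where you diverge from the paper and where your write-up has a genuine gap. The paper does \emph{not} first lift $p$ and then separately lift a partial isometry: an infinite projection $p$ gives a unital homomorphism $\ph_0$ from the Toeplitz algebra $T = C^*(s \mid s^* s = 1)$ into $p A_x p$ with $\ph_0 (1 - s s^*) \neq 0$, and semiprojectivity of $T$ (combined with the same clopen cut-down as in part~(\ref{L_5Z20_IfDcPj_Pj})) lifts the entire homomorphism to $\ph \colon T \to A$ in one step; then $e = \ph (1)$, $\ph (s)^* \ph (s) = e$, $\ph(s) \ph(s)^* \leq e$, and $e - \ph (s) \ph (s)^* \neq 0$ because its image at $x$ is nonzero. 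Your two-stage plan can be made to work, but as written the ``technical heart'' you name is only announced, not proved. To close it: since $\ev_x (e A e) = p A_x p$, lift $v$ (with $v^* v = p$, $v v^* \lneq p$) to some $a \in e A e$; by upper semicontinuity choose a compact open $Y \ni x$ on which $\| \ev_y (a^* a - e) \| < 1$; in the unital corner $e (A|_Y) e$ the element $a^* a$ is then invertible, so $t = a (a^* a)^{-1/2}$ satisfies $t^* t = e$, $t t^* \leq e$, and $\ev_x (e - t t^*) = p - v v^* \neq 0$, so the image of $e$ in $A|_Y$ is infinite. You must then also note that this forces $e$ itself to be infinite in $A$: writing $e = e_1 \oplus e_2$ along the clopen decomposition $A \cong A|_Y \oplus A|_{X \setminus Y}$, the summand $e_1$ is equivalent to a proper subprojection $f_1 < e_1$, whence $e \sim f_1 + e_2 \lneq e$. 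Without these steps (the exact polar decomposition giving $t^* t = e$ on the nose, and the passage from the summand back to $A$) the proof is incomplete; with them it is a valid, slightly longer alternative to the paper's one-shot use of semiprojectivity of~$T$.
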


The use of semiprojectivity is slightly indirect,
because we don't know that there is a countable neighborhood base
at~$x$.

\begin{proof}[Proof of Lemma~\ref{L_5Z20_IfDcPj}]
We prove~(\ref{L_5Z20_IfDcPj_Pj}).
Since $\C$ is semiprojective,
there is $\ep > 0$ such that whenever $B$ and $C$ are C*-algebras,
$\ph \colon B \to C$ is a \hm,
and $b \in B$ satisfies $\| b^* - b \| < \ep$,
$\| b^2 - b \| < \ep$,
and $\ph (b)$ is a \pj,
then there exists a projection $e \in B$
such that $\ph (e) = \ph (b)$.
Since $\ev_x$ is surjective,
there is $a \in A$ such that $\ev_x (a) = p$.
By Lemma \ref{L_5Z20_PropOfFld}(\ref{L_5Z20_PropOfFld_usc}),
there is an open set $U \subset X$ with $x \in U$
such that for all $y \in U$
we have $\| \ev_y (a^* - a) \| < \frac{\ep}{2}$
and $\| \ev_y (a^2 - a) \| < \frac{\ep}{2}$.
Since $X$ is totally disconnected,
there is a compact open set $K \subset X$
such that $x \in K \subset U$.
Define $b = \io (\ch_K) a$.
Using Lemma \ref{L_5Z20_PropOfFld}(\ref{L_5Z20_PropOfFld_mult}),
we get
\[
\| \ev_y (b^* - b) \| < \frac{\ep}{2}
\andeqn
\| \ev_y (b^2 - b) \| < \frac{\ep}{2}
\]
when $y \in K$,
and $\ev_y (b^* - b) = \ev_y (b^2 - b) = 0$
when $y \in X \setminus K$.
It follows from
Lemma \ref{L_5Z20_PropOfFld}(\ref{L_5Z20_PropOfFld_norm})
that
\[
\| b^* - b \| \leq \frac{\ep}{2} < \ep
\andeqn
\| b^2 - b \| \leq \frac{\ep}{2} < \ep.
\]
Now obtain $e$ by using the choice of $\ep$
with $B = A$ and $C = A_x$.

We describe the changes needed
for the proof of (\ref{L_5Z20_IfDcPj_Inf}).
Let $T$ be the Toeplitz algebra,
generated by an isometry $s$
(so $s^* s = 1$ but $s s^* \neq 1$).
By hypothesis,
there is a \hm{} $\ph_0 \colon T \to A_x$
such that $\ph_0 (1) = p$
and $\ph_0 (1 - s s^*) \neq 0$.
Since $T$ is semiprojective,
an argument similar to that in the proof of (\ref{L_5Z20_IfDcPj_Inf})
shows that there is a \hm{}
$\ph \colon T \to A$
such that $\ev_x \circ \ph = \ph_0$.
Set $e = \ph (1)$.
Then $\ph (s)^* \ph (s) = e$
and $\ph (s) \ph (s)^* \leq e$.
We have $e - \ph (s) \ph (s)^* \neq 0$
because $\ev_x (e - \ph (s) \ph (s)^*) \neq 0$.
So $e$ is an infinite projection.
\end{proof}

\begin{thm}\label{T_5Z20_PrInCXAlg}
Let $X$ be a totally disconnected locally compact Hausdorff space
and let $A$ be a $C_0 (X)$-algebra.
\begin{enumerate}
\item\label{T_5Z20_PrInCXAlg_rsp}
Assume that $A_x$ has residual~(SP) for all $x \in X$.
Then $A$ has residual~(SP).
\item\label{T_5Z20_PrInCXAlg_piip}
Assume that $A_x$ is purely infinite and has the ideal property
for all $x \in X$.
Then $A$ is purely infinite and has the ideal property.
\item\label{T_5Z20_PrInCXAlg_wip}
Assume that $A_x$ has the weak ideal property for all $x \in X$.
Then $A$ has the weak ideal property.
\item\label{T_5Z20_PrInCXAlg_tdz}
Assume that $A$ is separable
and $A_x$ has topological dimension zero for all $x \in X$.
Then $A$ has topological dimension zero.
\end{enumerate}
\end{thm}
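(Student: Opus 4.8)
The plan is to prove all four parts by a single mechanism, exploiting the fact that each of the four properties is equivalent to being residually hereditarily in an appropriate upwards directed class~$\CC$ in the sense of Definition~\ref{D_5X30_ResHerC}: for part~(\ref{T_5Z20_PrInCXAlg_rsp}) the class of C*-algebras containing a nonzero projection (item~(\ref{Item_5X31_RSP})), for part~(\ref{T_5Z20_PrInCXAlg_piip}) the class of C*-algebras containing an infinite projection (item~(\ref{Item_5X31_PIIP})), for part~(\ref{T_5Z20_PrInCXAlg_wip}) the class of all $B$ with $K \otimes B$ containing a nonzero projection (item~(\ref{Item_5X31_WIP})), and for part~(\ref{T_5Z20_PrInCXAlg_tdz}) the class of all $B$ with $\OT \otimes B$ containing a nonzero projection (Theorem~\ref{T_5Y28_TDz}). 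In each case the hypothesis is that every fiber $A_x$ is residually hereditarily in~$\CC$, and the goal is to show the same for~$A$; for part~(\ref{T_5Z20_PrInCXAlg_tdz}) I then invoke Theorem~\ref{T_5Y28_TDz} in both directions, using that each $A_x$, as a quotient of the separable algebra~$A$, is separable.

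The engine I would isolate is this: if $X$ is totally disconnected, $A$ is a $C_0(X)$-algebra, $\CC$ is one of these four classes, and every $A_x$ is residually hereditarily in~$\CC$, then $A$ is residually hereditarily in~$\CC$. To prove it, fix an ideal $I \subsetneq A$ and a nonzero hereditary subalgebra $B \subset A / I$; I must show $B \in \CC$. By Lemma~\ref{L_5Z20_QuotCX} the quotient $A / I$ is a $C_0(X)$-algebra with $(A / I)_x \cong A_x / I_x$, and by Corollary~\ref{C_5Z20_HSAisCXA} the hereditary subalgebra $B$ is a $C_0(X)$-algebra with $B_x = \ev_x (B)$. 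Since the image of a hereditary subalgebra under the quotient map $\ev_x$ is hereditary, each $B_x$ is a hereditary subalgebra of $A_x / I_x$. As $B \neq 0$, Lemma~\ref{L_5Z20_PropOfFld}(\ref{L_5Z20_PropOfFld_norm}) supplies some $x$ with $B_x \neq 0$; then $A_x / I_x \neq 0$, so $I_x$ is a proper ideal of~$A_x$ and $B_x$ is a nonzero hereditary subalgebra of the proper quotient $A_x / I_x$. Since $A_x$ is residually hereditarily in~$\CC$, it follows that $B_x \in \CC$.

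It remains to pass from $B_x \in \CC$ to $B \in \CC$, and this is where total disconnectedness of~$X$ is used, through Lemma~\ref{L_5Z20_IfDcPj}. For part~(\ref{T_5Z20_PrInCXAlg_rsp}), $B_x$ contains a nonzero projection, which lifts to a nonzero projection of~$B$ by Lemma~\ref{L_5Z20_IfDcPj}(\ref{L_5Z20_IfDcPj_Pj}); for part~(\ref{T_5Z20_PrInCXAlg_piip}), $B_x$ contains an infinite projection, which lifts to an infinite projection of~$B$ by Lemma~\ref{L_5Z20_IfDcPj}(\ref{L_5Z20_IfDcPj_Inf}). For parts~(\ref{T_5Z20_PrInCXAlg_wip}) and~(\ref{T_5Z20_PrInCXAlg_tdz}) I would first form $D \otimes B$ with $D = K$ and $D = \OT$ respectively; by Lemma~\ref{L_5Z20_TPCX}, together with the nuclearity of~$K$ and~$\OT$ (so that the maximal tensor product coincides with the one used in the definition of~$\CC$), this is again a $C_0(X)$-algebra over~$X$ with fiber $D \otimes B_x$. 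The condition $B_x \in \CC$ says exactly that $D \otimes B_x$ contains a nonzero projection, which lifts to a nonzero projection of $D \otimes B$ by Lemma~\ref{L_5Z20_IfDcPj}(\ref{L_5Z20_IfDcPj_Pj}); hence $K \otimes B$ (respectively $\OT \otimes B$) contains a nonzero projection and $B \in \CC$.

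Most of the work is carried by the preparatory lemmas, so the real content of the argument is the compatibility of the fiber construction with passing to ideals, quotients, hereditary subalgebras, and tensor products by~$K$ or~$\OT$. I expect the point requiring the most care to be the lifting step: one must lift the relevant projection into the hereditary subalgebra~$B$ (or into $D \otimes B$) itself, not merely into~$A$. This is precisely why it is essential that $B$ carries its own $C_0(X)$-algebra structure with $B_x = \ev_x (B)$ (Corollary~\ref{C_5Z20_HSAisCXA}), and that $D \otimes B$ does as well (Lemma~\ref{L_5Z20_TPCX}), so that Lemma~\ref{L_5Z20_IfDcPj} can be applied with $B$, respectively $D \otimes B$, in place of~$A$.
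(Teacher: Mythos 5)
Your proposal is correct and follows essentially the same route as the paper: both reduce each part to showing $A$ is residually hereditarily in the appropriate upwards directed class, pass to the fiber $B_x$ of the hereditary subalgebra of the quotient via Lemma~\ref{L_5Z20_QuotCX} and Corollary~\ref{C_5Z20_HSAisCXA}, and then lift the relevant projection using Lemma~\ref{L_5Z20_IfDcPj} (after tensoring with $K$ or $\OT$ via Lemma~\ref{L_5Z20_TPCX} for parts (\ref{T_5Z20_PrInCXAlg_wip}) and~(\ref{T_5Z20_PrInCXAlg_tdz})). The separability bookkeeping for part~(\ref{T_5Z20_PrInCXAlg_tdz}) via Theorem~\ref{T_5Y28_TDz} also matches the paper exactly.
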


\begin{proof}
We prove~(\ref{T_5Z20_PrInCXAlg_rsp}).
Recall (Definition~7.1 of~\cite{PsnPh2})
that a C*-algebra~$D$ has residual~(SP)
\ifo{} $D$ is residually hereditarily in
the class $\CC$ of all C*-algebras which
contain a nonzero \pj.
(See~(\ref{Item_5X31_RSP}) in the introduction.)

We verify the definition directly.
So let $I \subset A$ be an ideal
such that $A / I \neq 0$,
and let $B \subset A / I$ be a nonzero \hsa.
Combining Lemma~\ref{L_5Z20_QuotCX}
and Corollary~\ref{C_5Z20_HSAisCXA},
we see that $B$ is a $C_0 (X)$-algebra.
Since $B \neq 0$,
Lemma \ref{L_5Z20_PropOfFld}(\ref{L_5Z20_PropOfFld_norm})
provides $x \in X$ such that $B_x \neq 0$.
Let ${\ov{\ev}}_x \colon A / I \to (A / I)_x$
be the map of Notation~\ref{N_5Z20_CXAlgNtn}
for the $C_0 (X)$-algebra $A / I$.
Then $B_x = {\ov{\ev}}_x (B)$ by Corollary~\ref{C_5Z20_HSAisCXA}
and $(A / I)_x \cong A_x / I_x$ by Lemma~\ref{L_5Z20_QuotCX}.
Thus $B_x$ is isomorphic to a nonzero \hsa{}
of $A_x / I_x$.
Since $A_x$ has residual~(SP),
it follows that there is a \nzp{} $p \in B_x$.
Lemma \ref{L_5Z20_IfDcPj}(\ref{L_5Z20_IfDcPj_Pj})
provides a \pj{} $e \in B$
such that ${\ov{\ev}}_x (e) = p$.
Then $e \neq 0$ since ${\ov{\ev}}_x (e) \neq 0$.
We have thus verified that $A$ has residual~(SP).

We next prove~(\ref{T_5Z20_PrInCXAlg_piip}).
Let $\CC$ be the class
of all C*-algebras which contain an infinite \pj.
By the equivalence of conditions (ii) and~(iv)
of Proposition~2.11 of~\cite{PR}
(valid, as shown there, even when $A$ is not separable),
a C*-algebra~$D$ is purely infinite and has the ideal property
\ifo{} $D$ is residually hereditarily in~$\CC$.
(See~(\ref{Item_5X31_PIIP}) in the introduction.)
The argument is now the same as for~(\ref{T_5Z20_PrInCXAlg_rsp}),
except using Lemma \ref{L_5Z20_IfDcPj}(\ref{L_5Z20_IfDcPj_Inf})
in place of Lemma \ref{L_5Z20_IfDcPj}(\ref{L_5Z20_IfDcPj_Pj}).

Now we prove~(\ref{T_5Z20_PrInCXAlg_wip}).
Let $\CC$ be the class
of all C*-algebras $B$ such that $K \otimes B$
contains a nonzero \pj.
It is shown at the beginning of the proof of
Theorem~8.5 of~\cite{PsnPh2}
that a C*-algebra~$D$ has the weak ideal property
\ifo{} $D$ is residually hereditarily in~$\CC$.
(See~(\ref{Item_5X31_WIP}) in the introduction.)

We verify that $A$ satisfies this condition.
So let $I \subset A$ be an ideal
such that $A / I \neq 0$,
and let $B \subset A / I$ be a nonzero \hsa.
As in the proof of~(\ref{T_5Z20_PrInCXAlg_rsp}),
$B$ is a $C_0 (X)$-algebra
and there is $x \in X$ such that $B_x$
is isomorphic to a nonzero \hsa{}
of $A_x / I_x$.
Therefore $K \otimes B_x$ contains a nonzero \pj~$p$.
Since $K$ is nuclear,
Lemma~\ref{L_5Z20_TPCX} implies that
$K \otimes B$ is a $C_0 (X)$-algebra
with $(K \otimes B)_x \cong K \otimes B_x$.
So Lemma \ref{L_5Z20_IfDcPj}(\ref{L_5Z20_IfDcPj_Pj})
provides a \pj{} $e \in K \otimes B$
such that ${\ov{\ev}}_x (e) = p$.
Then $e \neq 0$ since ${\ov{\ev}}_x (e) \neq 0$.
This shows that $A$ is residually hereditarily in~$\CC$,
as desired.

Finally,
we prove~(\ref{T_5Z20_PrInCXAlg_tdz}).
Since $A$ is separable,
by
Theorem~\ref{T_5Y28_TDz} it suffices to show that
$A$ is residually hereditarily in
the class $\CC$ of all C*-algebras $D$ such that $\OT \otimes D$
contains a nonzero \pj.
Also, for every $x \in X$,
the algebra $A_x$ is separable.
So Theorem~\ref{T_5Y28_TDz} implies that $A_x$
is residually hereditarily in~$\CC$.
The proof is now the same as for~(\ref{T_5Z20_PrInCXAlg_wip}),
except using $\OT$ in place of~$K$.
\end{proof}

We will next show that
when the $C_0 (X)$-algebra is \ct,
the fibers are all nonzero,
and the algebra is separable,
then the algebra has one of our properties
\ifo{} all the fibers have this property
and $X$ is totally disconnected.

Separability should not be necessary.

Having nonzero fibers is necessary.
The zero C*-algebra is a $C_0 (X)$-algebra for any~$X$,
and it certainly has all our properties.
For a less trivial example,
let $X_0$ be the Cantor set,
take $X = X_0 \amalg [0, 1]$,
and make $C (X_0, \OT)$ a $C (X)$-algebra
via restriction of functions in $C (X)$ to~$X_0$.

Continuity is necessary, at least without separability.

\begin{exa}\label{E_6307_Product}
Let $X$ be any \chs.
Let $A$ be the C*-algebra product $A = \prod_{x \in X} \OT$,
consisting of elements $a$ in the set theoretic product
such that $\sup_{x \in X} \| a_x \|$ is finite.
Define a homomorphism $\io \colon C (X) \to A$
by $\io (f) = (f (x) \cdot 1)_{x \in X}$ for $f \in C (X)$.
We show that this homomorphism makes $A$ a $C (X)$-algebra
which is purely infinite and has the ideal property,
the weak ideal property,
topological dimension zero,
and residual~(SP).
Taking $X = [0, 1]$
shows that without at least one of
continuity of the $C (X)$-algebra structure
and separability,
all four parts of Theorem~\ref{T_5Z20_PrInCXAlg} fail.

It is immediate to check that if $B_x$ is an
arbitrary nonzero \uca{} for $x \in X$,
then the map $\io \colon C (X) \to \prod_{x \in X} B_x$
given by $\io (f) = (f (x) \cdot 1_{B_x})_{x \in X}$ for $f \in C (X)$
is a homomorphism
which makes $\prod_{x \in X} B_x$ a $C (X)$-algebra.

It is easy to check,
by working independently in the factors in the product,
that if $B_x$ has real rank zero for all $x \in X$
then $\prod_{x \in X} B_x$ has real rank zero,
and that if $B_x$ is purely infinite and simple for all $x \in X$
then every nonzero projection in $\prod_{x \in X} B_x$ is
properly infinite.
In particular,
in the case of $A = \prod_{x \in X} \OT$,
condition (iii) in Proposition 2.11 of~\cite{PR}
is satisfied
(every hereditary subalgebra of $A$ is generated as an ideal
by its properly infinite projections).
The implications from this condition to the others
in Proposition 2.11 of~\cite{PR} do not require separability.
Thus, using condition (iv) there,
we see that $A$ has residual~(SP).
Using condition (ii) there,
we see that $A$ is purely infinite and has the ideal property.
The weak ideal property is then immediate,
and topological dimension zero
now follows from Theorem~\ref{T_5Y28_wipImpTDz}.
\end{exa}

The construction is easily adapted to spaces $X$
which are only locally compact and possibly nonunital fibers.

It is possible that requiring separability
and that all fibers be nonzero
will force $X$ to be totally disconnected.

\begin{lem}\label{L_5Z22_tdz}
Let $X$ be a second countable locally \chs,
and let $A$ be a separable continuous $C_0 (X)$-algebra
such that $A_x \neq 0$ for all $x \in X$.
If $A$ has topological dimension zero
then $X$ is totally disconnected.
\end{lem}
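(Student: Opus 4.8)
The plan is to transport topological dimension zero from $\Prim(A)$ to $X$ by means of the canonical base map $\ph \colon \Prim(A) \to X$ coming from the $C_0(X)$-structure. Here $\ph$ sends a primitive ideal $P$ to the unique $x \in X$ with $\overline{\io(C_0(X \setminus \{x\}))A} \subseteq P$; this $x$ is well defined because $\io(C_0(X))$ is central and therefore acts by scalars in the irreducible representation with kernel~$P$. Since $A_x \neq 0$ for every $x$, each fiber carries an irreducible representation, so $\ph$ is surjective, and a direct check using \Lem{L_5Z20_PropOfFld}(\ref{L_5Z20_PropOfFld_mult}) shows that for open $U \subseteq X$ the preimage $\ph^{-1}(U)$ is the open subset of $\Prim(A)$ associated with the ideal $\overline{\io(C_0(U))A}$, so $\ph$ is continuous. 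The goal then reduces to producing, for each $x \in X$ and each open $U \ni x$, a compact open set $V$ with $x \in V \subseteq U$: since $X$ is Hausdorff such a $V$ is automatically clopen, and a space with a base of clopen sets is totally disconnected.

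The essential step, and the only place where continuity of the field enters, is that $\ph$ is an \emph{open} map. If $W \subseteq \Prim(A)$ is open and corresponds to the ideal $I \subseteq A$, then $x \in \ph(W)$ exactly when the image of $I$ in $A_x$ is nonzero, that is, when $\ev_x(a) \neq 0$ for some $a \in I$; hence $\ph(W) = \bigcup_{a \in I} \{x \in X : \|\ev_x(a)\| > 0\}$. For a continuous $C_0(X)$-algebra the function $x \mapsto \|\ev_x(a)\|$ is continuous by \Def{D_5Z22_CtBundle}, so each set $\{x : \|\ev_x(a)\| > 0\} = \bigcup_{n} \{x : \|\ev_x(a)\| > 1/n\}$ is open, whence $\ph(W)$ is open. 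I expect this openness assertion to be the one substantial point in the argument.

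Finally I combine openness with topological dimension zero. Fix $x \in X$ and an open set $U \subseteq X$ containing $x$. Choose $P \in \ph^{-1}(x)$ using surjectivity; then $\ph^{-1}(U)$ is an open neighborhood of $P$ in $\Prim(A)$. Since $A$ has topological dimension zero, $\Prim(A)$ has topological dimension zero (\Def{D_5Y27_TDZero311}), so there is a compact open set $Y$ with $P \in Y \subseteq \ph^{-1}(U)$. Put $V = \ph(Y)$. Then $V$ is compact, being the continuous image of a compact set, and open, since $\ph$ is an open map, and $x = \ph(P) \in V \subseteq U$. Thus $X$ has a base of compact open—hence clopen—sets, so $X$ is totally disconnected. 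Notably, neither separability of $A$ nor second countability of $X$ is used in this argument, in agreement with the subsequent remark that separability should not be necessary.
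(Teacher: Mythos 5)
Your proof is correct, and it takes a genuinely different route from the one in the paper. The paper's proof is short but heavily leveraged: it identifies $A$ with a continuous C*-bundle via Proposition~\ref{P_5Z31_Identify}, uses Corollary~2.8 of~\cite{KW1} to see that $\OTT A$ is again a continuous $C_0 (X)$-algebra, applies Theorem~\ref{T_5Y27_spiTDz} to conclude that $\OTT A$ has the ideal property (this is exactly where separability of~$A$, hence second countability of~$X$, is needed), and then quotes Theorem~2.1 of~\cite{Psn6}, which says that a continuous field with nonzero fibers and the ideal property has totally disconnected base. You instead argue entirely in point-set topology with the base map $\ph \colon \Prim (A) \to X$: surjectivity comes from $A_x \neq 0$, continuity is formal, and the one substantive input---which you correctly identify as the place where continuity of the field enters---is that $\ph$ is open, because $\ph (W) = \bigcup_{a \in I_A (W)} \big\{ x \in X \colon \| \ev_x (a) \| > 0 \big\}$ and each of these sets is open when $x \mapsto \| \ev_x (a) \|$ is continuous (the upper semicontinuity available for every $C_0 (X)$-algebra would not suffice, consistent with Example~\ref{E_6307_Product}). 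Pushing compact open neighborhoods in $\Prim (A)$ forward through a continuous open surjection onto the Hausdorff space $X$ then yields a base of clopen sets. What your approach buys is significant: it uses neither separability of $A$ nor second countability of $X$, so it proves the strengthening that the authors explicitly anticipate (``Separability should not be necessary''). Two points you leave implicit are routine but worth recording: the character by which $\io (C_0 (X))$ acts in an irreducible representation is nonzero (by nondegeneracy of $\io$ and of the representation), so $\ph$ is everywhere defined; and the inclusion of $\big\{ x \colon \ev_x (I_A (W)) \neq 0 \big\}$ into $\ph (W)$ requires choosing a primitive ideal of $A_x$ not containing the nonzero ideal $\ev_x (I_A (W))$ and pulling it back along $\ev_x$.
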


We assume that $X$ is second countable
because we need $A$ to be separable in Theorem~\ref{T_5Y27_spiTDz}.

\begin{proof}[Proof of Lemma~\ref{L_5Z22_tdz}]
We identify continuous $C_0 (X)$-algebras and continuous C*-bundles
as in Proposition~\ref{P_5Z31_Identify}.
Now use Corollary~2.8 of~\cite{KW1}
to see that
$\OTT A$ is a continuous $C_0 (X)$-algebra.
It follows from Theorem~\ref{T_5Y27_spiTDz}
that $\OTT A$ has the ideal property,
and then from Theorem~2.1 of~\cite{Psn6}
that $X$ is totally disconnected.
\end{proof}

\begin{thm}\label{T_5Z22_EqConds}
Let $X$ be a second countable locally \chs,
and let $A$ be a separable continuous $C_0 (X)$-algebra
such that $A_x \neq 0$ for all $x \in X$.
\begin{enumerate}
\item\label{T_5Z22_EqConds_rsp}
$A$ has residual~(SP) \ifo{}
$X$ is totally disconnected
and $A_x$ has residual~(SP) for all $x \in X$.
\item\label{T_5Z22_EqConds_piip}
$A$ is purely infinite and has the ideal property \ifo{}
$X$ is totally disconnected
and $A_x$ is purely infinite
and has the ideal property for all $x \in X$.
\item\label{T_5Z22_EqConds_wip}
$A$ has the weak ideal property \ifo{}
$X$ is totally disconnected
and $A_x$ has the weak ideal property for all $x \in X$.
\item\label{T_5Z22_EqConds_tdz}
$A$ has topological dimension zero \ifo{}
$X$ is totally disconnected
and $A_x$ has topological dimension zero for all $x \in X$.
\end{enumerate}
\end{thm}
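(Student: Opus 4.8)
The plan is to assemble this theorem from the results already proved in this section, handling all four parts in parallel. The reverse implications are immediate: in each part the hypotheses ``$X$ is totally disconnected'' and ``$A_x$ has the relevant property for all $x$'' are exactly the hypotheses of the corresponding part of Theorem~\ref{T_5Z20_PrInCXAlg}, and since $A$ is separable (needed only for part~(\ref{T_5Z22_EqConds_tdz})), that theorem directly gives that $A$ has the property. So the only real work is in the forward directions, where two things must be shown: that each fiber inherits the property, and that $X$ is totally disconnected.

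For the fibers, I would use that every fiber $A_x = A / \Ker (\ev_x)$ is a quotient of~$A$, so it suffices to observe that each of the four properties passes to quotients. For residual~(SP), for the combination of pure infiniteness and the ideal property, and for the weak ideal property, this follows from the characterization as ``residually hereditarily in~$\CC$'' recorded in items~(\ref{Item_5X31_PIIP}), (\ref{Item_5X31_RSP}), and~(\ref{Item_5X31_WIP}) of the introduction: if $A$ is residually hereditarily in~$\CC$ and $J \subset A$ is an ideal, then every proper quotient of $A / J$ is a proper quotient of~$A$, hence hereditarily in~$\CC$, so $A / J$ is again residually hereditarily in~$\CC$. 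For topological dimension zero one instead cites Proposition~2.6 of~\cite{BP09} together with Lemma~3.6 of~\cite{PsnPh1}, exactly as in the proof of Theorem~\ref{T_5Y28_TDz}.

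For total disconnectedness of~$X$, I would reduce all four cases to Lemma~\ref{L_5Z22_tdz}, which yields the conclusion as soon as one knows that $A$ has topological dimension zero. So it is enough to check that, in each case, $A$ has topological dimension zero. This is trivial for part~(\ref{T_5Z22_EqConds_tdz}) and is Theorem~\ref{T_5Y28_wipImpTDz} for part~(\ref{T_5Z22_EqConds_wip}). For the remaining two parts I would use the evident containments of the defining classes: a nonzero projection $p$ in~$B$ gives the nonzero projection $e_{11} \otimes p$ in $K \otimes B$, so the class defining residual~(SP) sits inside the class defining the weak ideal property, and an infinite projection is in particular a nonzero projection, so the class for pure infiniteness with the ideal property sits inside the residual~(SP) class. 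Being residually hereditarily in a smaller class implies the same for a larger one, so each of these two properties implies the weak ideal property, hence topological dimension zero by Theorem~\ref{T_5Y28_wipImpTDz}. As $A$ is a separable continuous $C_0 (X)$-algebra with nonzero fibers over a second countable locally compact Hausdorff space, Lemma~\ref{L_5Z22_tdz} then applies.

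There is no genuine obstacle here; the theorem is a synthesis whose content resides entirely in Theorem~\ref{T_5Z20_PrInCXAlg} and Lemma~\ref{L_5Z22_tdz}. The only point needing care is the uniform bookkeeping of the forward direction---verifying simultaneously that each property descends to the fibers and that each property forces topological dimension zero so that Lemma~\ref{L_5Z22_tdz} can be invoked---while making sure the separability and continuity hypotheses match what those earlier results require.
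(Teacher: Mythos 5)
Your proposal is correct and follows essentially the same route as the paper: the reverse implications come from Theorem~\ref{T_5Z20_PrInCXAlg}, the fibers inherit each property because all four pass to quotients, and total disconnectedness of $X$ follows from Lemma~\ref{L_5Z22_tdz} once one notes that each property implies topological dimension zero via Theorem~\ref{T_5Y28_wipImpTDz}. The only cosmetic difference is that you verify quotient permanence directly from the ``residually hereditarily in~$\CC$'' characterizations, where the paper cites Theorems 7.4(7), 6.8(7), and 8.5(5) of~\cite{PsnPh2}; both justifications are sound.
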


\begin{proof}
In all four parts,
the reverse implications follow from Theorem~\ref{T_5Z20_PrInCXAlg}.
Also,
in all four parts,
the fact that $A_x$ has the appropriate property
for all $x \in X$
follows from the general fact that the property passes to
arbitrary quotients.
See Theorem 7.4(7) of~\cite{PsnPh2} for residual~(SP),
Theorem 6.8(7) of~\cite{PsnPh2} for the combination of
purely infiniteness and the ideal property,
Theorem 8.5(5) of~\cite{PsnPh2} for the weak ideal property,
and combine Proposition 5.8 of~\cite{PsnPh2}
with Theorem \ref{T_5Y28_TDz}(\ref{T_5Y28_TDz_RHO2})
for the weak ideal property.

It remains to show that all four properties imply that $X$ is totally
disconnected.
All four properties imply topological dimension zero
(using Theorem~\ref{T_5Y28_wipImpTDz} as necessary),
so this follows from Lemma~\ref{L_5Z22_tdz}.
\end{proof}

The proofs in this section depend on properties of projections,
and so do not work for a general property defined by
being residually hereditarily in an upwards directed class
of C*-algebras.
However, we know of no counterexamples
to either version of the following question.

\begin{qst}\label{Q_5Z22_HeredBundle}
Let $\CC$ be an
upwards directed class of C*-algebras,
let $X$ be a totally disconnected locally compact space,
and let $A$ be a $C_0 (X)$-algebra
such that $A_x$ is residually hereditarily in~$\CC$
for all $x \in X$
Does it follow
that $A$ is residually hereditarily in~$\CC$?
What if we assume that $A$ is a \ct{} $C_0 (X)$-algebra?
\end{qst}

\section{Strong pure infiniteness for bundles}\label{Sec_PermPI}

It seems to be unknown whether $C_0 (X) \otimes A$
is purely infinite when $X$ is a locally \chs{}
and $A$ is a general purely infinite C*-algebra,
even when $A$ is additionally assumed to be simple.
(To apply Theorem~5.11 of~\cite{KR},
one also needs to know that $A$ is approximately divisible.)
Efforts to prove this by working locally on~$X$
seem to fail.
Even in cases in which they work, such methods are messy.
It therefore seems worthwhile to give the following
result,
which, given what is known already, has a simple proof.

\begin{thm}\label{T_5Z22_PIBundle}
Let $X$ be a locally \chs,
and let $A$ be a locally trivial $C_0 (X)$-algebra
whose fibers $A_x$ are strongly purely infinite
in the sense of Definition~5.1 of~\cite{KR2}.
Then $A$ is strongly purely infinite.
\end{thm}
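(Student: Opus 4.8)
The plan is to reduce the statement to three permanence properties of strong pure infiniteness that are available in \cite{KR2}: that $C_0 (Y) \otimes B$ is strongly purely infinite whenever $Y$ is locally \chs{} and $B$ is strongly purely infinite (this is the known result being generalized), that an extension of a strongly purely infinite C*-algebra by a strongly purely infinite ideal is again strongly purely infinite, and that strong pure infiniteness is preserved under inductive limits. The $C_0 (X)$-algebra bookkeeping needed to apply these is exactly the machinery of Section~\ref{Sec_PermBunTD} (Notation~\ref{N_5Z20_CXAlgNtn}, Corollary~\ref{C_5Z20_HSAisCXA}, and Lemma~\ref{L_5Z20_QuotCX}).

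First I would fix an open cover $(U_i)_{i \in I}$ of $X$ witnessing local triviality, so that for each $i$ there is a C*-algebra $B_i$, isomorphic to the fibers $A_x$ for $x \in U_i$ and hence strongly purely infinite by hypothesis, together with a $C_0 (U_i)$-algebra isomorphism $I_A (U_i) \cong C_0 (U_i) \otimes B_i$, where $I_A (U_i) = {\ov{\io (C_0 (U_i)) A}}$ is the ideal attached to the open set $U_i$. By the cited triviality result each $I_A (U_i)$ is strongly purely infinite. Since $\sum_{i} C_0 (U_i)$ contains $C_{\mathrm{c}} (X)$ (apply a partition of unity subordinate to $(U_i)$ on the compact support of a given function) and is therefore dense in $C_0 (X)$, nondegeneracy of $\io$ gives ${\ov{\sum_{i} I_A (U_i)}} = A$. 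For a finite set $F \subset I$ put $W_F = \bigcup_{i \in F} U_i$; then $\sum_{i \in F} I_A (U_i) = I_A (W_F)$, and the ideals $J_F = I_A (W_F)$ form an increasing net with ${\ov{\bigcup_F J_F}} = A$, so that $A = \varinjlim_F J_F$.

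The core step is to prove that each $J_F$ is strongly purely infinite, which I would do by induction on $\card (F)$ along a filtration. Writing $F = \{ i_1, \ldots, i_m \}$ and $W_k = U_{i_1} \cup \cdots \cup U_{i_k}$, one has $I_A (W_{k - 1}) \subset I_A (W_k)$, and by Lemma~\ref{L_5Z20_QuotCX} the quotient $I_A (W_k) / I_A (W_{k - 1})$ is a $C_0 (W_k)$-algebra whose fibers vanish on $W_{k - 1}$ and equal $A_x$ on the closed set $Z_k = W_k \setminus W_{k - 1} \subset U_{i_k}$. Since $Z_k$ is closed in $U_{i_k}$, restricting the trivialization $I_A (U_{i_k}) \cong C_0 (U_{i_k}) \otimes B_{i_k}$ to $Z_k$ identifies this quotient with $C_0 (Z_k) \otimes B_{i_k}$, which is strongly purely infinite. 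Thus $I_A (W_k)$ is an extension of a strongly purely infinite algebra by the strongly purely infinite ideal $I_A (W_{k - 1})$, and the extension result shows it is strongly purely infinite; at $k = m$ this gives that $J_F$ is strongly purely infinite. Finally, since $A$ is the closure of the increasing net $(J_F)$ of strongly purely infinite ideals, closure of strong pure infiniteness under inductive limits yields that $A$ is strongly purely infinite.

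The step I expect to require the most care is the identification of the successive subquotients $I_A (W_k) / I_A (W_{k - 1})$ with $C_0 (Z_k) \otimes B_{i_k}$: one must check that iterated restriction to the locally closed set $Z_k$ (first to the open set $W_k$, then to the closed subset $Z_k$ of $W_k$) agrees, as a $C_0 (Z_k)$-algebra, with the restriction of the trivial model $C_0 (U_{i_k}) \otimes B_{i_k}$, and that the fiber computation of Lemma~\ref{L_5Z20_QuotCX} really forces the quotient to be supported on $Z_k$. The genuinely deep inputs---preservation of strong pure infiniteness under extensions and under inductive limits, together with the triviality-case result for $C_0 (Y) \otimes B$---are all drawn from \cite{KR2}, which is why, as advertised, the argument itself is short.
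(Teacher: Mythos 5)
Your argument is correct, but it takes a genuinely different route from the paper's. The paper first treats compact $X$, inducting on the least number $n$ of open sets $U_1, \ldots, U_n$ covering $X$ with $A |_{\ov{U_j}}$ trivial; in the induction step it shrinks the last set, choosing $W$ open with $X \setminus \bigcup_{j \leq n} U_j \subset W \subset {\ov{W}} \subset U_{n+1}$, so that the extension it uses is $0 \to {\ov{\io (C_0 (X \setminus L)) A}} \to A \to A |_L \to 0$ with $L = {\ov{W}}$ compact and the quotient literally of the form $C (L, B)$; the general case is then handled by exhausting $X$ with relatively compact open sets. You instead filter $A$ directly by the ideals $I_A (W_k)$ of the partial unions of the trivializing cover and identify the successive subquotients with $C_0 (Z_k) \otimes B_{i_k}$ over the locally closed sets $Z_k = U_{i_k} \setminus W_{k-1}$, which eliminates both the compact/noncompact dichotomy and the Urysohn-type shrinking, at the price of needing the trivial-bundle result over a base that is only locally compact; that version does follow from the compact case, since $C_0 (Z_k, B_{i_k})$ is an ideal in $C (Z_k^{+}, B_{i_k})$ and ideals of strongly purely infinite C*-algebras are strongly purely infinite by Proposition 5.11(ii) of~\cite{KR2}. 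The step you flag as delicate is most cleanly settled not fiberwise but by the second isomorphism theorem: $I_A (W_k) = I_A (W_{k-1}) + I_A (U_{i_k})$ and $I_A (W_{k-1}) \cap I_A (U_{i_k}) = I_A (W_{k-1} \cap U_{i_k})$, so $I_A (W_k) / I_A (W_{k-1}) \cong I_A (U_{i_k}) / I_A (W_{k-1} \cap U_{i_k})$, and the trivialization turns the right-hand side into $C_0 (U_{i_k}, B_{i_k}) / C_0 (W_{k-1} \cap U_{i_k}, B_{i_k}) \cong C_0 (Z_k, B_{i_k})$. One correction on sources: the two deep inputs you use---the extension theorem for strong pure infiniteness and the strong pure infiniteness of trivial bundles---are Theorem~1.3 and Corollary~5.3 of the Kirchberg preprint~\cite{Kr3}, not of~\cite{KR2}; Proposition~5.11 of~\cite{KR2} supplies only the passage to ideals and to inductive limits (and the net version of the latter is what you need for the limit over finite subsets of the index set).
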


Since $X$ is locally compact,
local triviality is equivalent to the requirement
that every point $x \in X$ have a compact neighborhood~$L$
such that,
using the $C (L)$-algebra structure on $A |_L$
from Notation~\ref{N_5Z20_CXAlgNtn}
and the obvious $C (L)$-algebra structure on $C (L, A_x)$,
these two algebras are isomorphic as $C (L)$-algebras.
We say in this case that $A |_L$ is trivial.

\begin{proof}[Proof of Theorem~\ref{T_5Z22_PIBundle}]
Let $\io \colon C_0 (X) \to Z (M (A))$
be the structure map.

We first prove the result when $X$ is compact,
by induction on the least $n \in \N$ for which there are
open sets $U_1, U_2, \ldots, U_n \subset X$
which cover $X$ and such that $A |_{{\ov{U_j}}}$ is trivial
for $j = 1, 2, \ldots, n$.
If $n = 1$,
there is a strongly purely infinite C*-algebra~$B$
such that $A \cong C (X, B)$,
and $A$ is strongly purely infinite by Corollary~5.3 of~\cite{Kr3}.
Assume the result is known for some $n \in \N$,
and suppose that
there are
open sets $U_1, U_2, \ldots, U_{n + 1} \subset X$
which cover $X$ and such that $A |_{{\ov{U_j}}}$ is trivial
for $j = 1, 2, \ldots, n + 1$.
Define $U = \bigcup_{j = 1}^n U_j$.
If $X \setminus U = \E$
then the induction hypothesis applies directly.
Otherwise,
use $X \setminus U \subset U_{n + 1}$
to choose an open set $W \subset X$
such that $X \setminus U \subset W \subset {\ov{W}} \subset U_{n + 1}$.
Define $Y = X \setminus W$
and $L = {\ov{W}}$.
Then
\[
L \cup Y = X,
\,\,\,\,\,\,
X \setminus L \subset Y,
\,\,\,\,\,\,
Y \subset U,
\andeqn
L \subset U_{n + 1}.
\]
Since $L \subset {\ov{U_{n + 1}}}$,
there is a strongly purely infinite C*-algebra~$B$
such that $A |_L \cong C (L, B)$.
By definition
(see Notation~\ref{N_5Z20_CXAlgNtn}),
there is a short exact sequence
\[
0 \longrightarrow {\ov{\io (C_0 (X \setminus L)) A}}
  \longrightarrow A
  \longrightarrow A |_L
  \longrightarrow 0.
\]
We can identify
the algebra ${\ov{\io (C_0 (X \setminus L)) A}}$ with an ideal
in $A |_Y$.
Consideration of the sets
$U_1 \cap Y, \, U_2 \cap Y, \, \ldots, \, U_n \cap Y$
shows that the induction hypothesis applies to $A |_Y$,
which is therefore strongly purely infinite.
So ${\ov{\io (C_0 (X \setminus L)) A}}$
is strongly purely infinite
by Proposition 5.11(ii) of~\cite{KR2}.
Also $A |_L$ is strongly purely infinite by Corollary~5.3 of~\cite{Kr3},
so $A$ is strongly purely infinite by Theorem~1.3 of~\cite{Kr3}.
This completes the induction step,
and the proof of the theorem when $X$ is compact.

We now prove the general case.
Let $(U_{\ld})_{\ld \in \Ld}$ be an increasing net of open subsets
of $X$ such that ${\ov{U_{\ld}}}$ is compact for all $\ld \in \Ld$
and $\bigcup_{\ld \in \Ld} U_{\ld} = X$.
For $\ld \in \Ld$, the algebra $A |_{\ov{U_{\ld}}}$
is strongly purely infinite by the case already done.
So its ideal ${\ov{\io (C_0 (U_{\ld})) A}}$
is strongly purely infinite by Proposition 5.11(ii) of~\cite{KR2}.
Using Lemma~\ref{L_5Z20_AppId},
one checks that
$A \cong \dirlim_{\ld \in \Ld} {\ov{\io (C_0 (U_{\ld})) A}}$,
so $A$ is strongly purely infinite
by Proposition 5.11(iv) of~\cite{KR2}.
\end{proof}

\begin{lem}\label{L_5Z22_SPI}
Let $A$ be a separable C*-algebra.
Then \tfae:
\begin{enumerate}
\item\label{L_5Z22_SPI_piwip}
$A$ is purely infinite and has topological dimension zero.
\item\label{L_5Z22_SPI_spiip}
$A$ is strongly purely infinite and has the ideal property.
\end{enumerate}
\end{lem}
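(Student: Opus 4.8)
The plan is to prove the two implications separately, after reducing both conditions to a common core. By \Thm{T_5Y27_spiTDz}, a separable purely infinite C*-algebra has topological dimension zero \ifo{} it has the ideal property (this is the equivalence of conditions~(\ref{T_5Y27_spiTDz_tdZ}) and~(\ref{T_5Y27_spiTDz_IP}) there). Thus each of~(\ref{L_5Z22_SPI_piwip}) and~(\ref{L_5Z22_SPI_spiip}) already carries the ideal property with it, and the genuine content of the lemma is that, among separable C*-algebras having the ideal property, pure infiniteness and strong pure infiniteness coincide.

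The implication from~(\ref{L_5Z22_SPI_spiip}) to~(\ref{L_5Z22_SPI_piwip}) requires little work. Strong pure infiniteness (Definition~5.1 of~\cite{KR2}) implies pure infiniteness in the sense of Definition~4.1 of~\cite{KR}, so if $A$ is strongly purely infinite and has the ideal property then $A$ is purely infinite and has the ideal property; the implication from~(\ref{T_5Y27_spiTDz_IP}) to~(\ref{T_5Y27_spiTDz_tdZ}) in \Thm{T_5Y27_spiTDz} then gives that $A$ has topological dimension zero.

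For the harder implication from~(\ref{L_5Z22_SPI_piwip}) to~(\ref{L_5Z22_SPI_spiip}), suppose $A$ is purely infinite and has topological dimension zero. First, \Thm{T_5Y27_spiTDz} shows that $A$ has the ideal property, so it remains only to produce strong pure infiniteness. By the equivalence recorded in item~(\ref{Item_5X31_PIIP}) of the introduction (the equivalence of conditions (ii) and~(iv) of Proposition~2.11 of~\cite{PR}), $A$ is residually hereditarily in the class of C*-algebras containing an infinite projection; in particular every nonzero hereditary subalgebra of every proper quotient of~$A$ contains a nonzero projection, so $A$ has residual~(SP) in the sense of item~(\ref{Item_5X31_RSP}). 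I would then invoke the criterion in the discussion following Proposition~4.18 of~\cite{KR2}, whose hypothesis is precisely residual~(SP) (as noted in item~(\ref{Item_5X31_RSP})), to conclude that the purely infinite algebra~$A$ is strongly purely infinite, which completes the verification of~(\ref{L_5Z22_SPI_spiip}).

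The main obstacle is to confirm the exact form of the strong pure infiniteness criterion quoted from~\cite{KR2}: one must check that pure infiniteness together with residual~(SP) --- equivalently, in this setting, pure infiniteness together with the ideal property --- genuinely satisfies the hypotheses of that statement, and that the separability assumed here suffices for it. Should residual~(SP) alone prove insufficient, the full ideal property is available to feed into the cited result, since it is the stronger hypothesis; once the citation is secured, no estimates remain, as everything else is immediate from \Thm{T_5Y27_spiTDz} together with the implication from strong to ordinary pure infiniteness.
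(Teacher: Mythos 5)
Your reduction is sound up to the last step. The easy direction (strong pure infiniteness implies pure infiniteness by Proposition~5.4 of~\cite{KR2}, after which Theorem~\ref{T_5Y27_spiTDz} gives topological dimension zero) matches the paper, and in the hard direction you correctly extract the ideal property --- equivalently, in a purely infinite algebra, residual~(SP) --- from Theorem~\ref{T_5Y27_spiTDz}. The gap is the final citation. The discussion after Proposition~4.18 of~\cite{KR2} sits in the part of that paper devoted to \emph{weak} pure infiniteness, before strong pure infiniteness is even introduced (Definition~5.1 of~\cite{KR2}); what residual~(SP) buys you there is the collapse of the weak pure infiniteness hierarchy (the equivalence of pure infiniteness with the conditions $\mathrm{pi}(n)$), not strong pure infiniteness, which is a formally stronger property. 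The introduction of the present paper cites that discussion only as the place where the \emph{hypothesis} of residual~(SP) appears in the literature, not for any conclusion about strong pure infiniteness, so the step ``purely infinite $+$ residual~(SP) $\Rightarrow$ strongly purely infinite'' is left unproved in your argument.

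What is actually needed at this point is Proposition~2.14 of~\cite{PR}, which says precisely that a purely infinite C*-algebra with the ideal property is strongly purely infinite; this is what the paper invokes immediately after obtaining the ideal property from Theorem~\ref{T_5Y27_spiTDz}. With that reference substituted for the one to~\cite{KR2}, your proof coincides with the paper's.
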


\begin{proof}
Condition~(\ref{L_5Z22_SPI_spiip})
implies condition~(\ref{L_5Z22_SPI_piwip})
because strong pure infiniteness implies pure infiniteness
(Proposition~5.4 of~\cite{KR2}),
the ideal property implies the weak ideal property,
and the weak ideal property implies topological dimension zero
(Theorem~\ref{T_5Y28_wipImpTDz}).

Now assume~(\ref{L_5Z22_SPI_piwip}).
Then $A$ has the ideal property
by Theorem~\ref{T_5Y27_spiTDz}.
Apply Proposition 2.14 of~\cite{PR}.
\end{proof}

\begin{cor}\label{C_5Z29_SPIBundle}
Let $X$ be a locally \chs,
and let $A$ be a locally trivial $C_0 (X)$-algebra
whose fibers $A_x$ are all purely infinite,
separable, and have topological dimension zero.
Then $A$ is strongly purely infinite.
\end{cor}

\begin{proof}
Lemma~\ref{L_5Z22_SPI} implies that the
fibers are all strongly purely infinite,
so that Theorem~\ref{T_5Z22_PIBundle} applies.
\end{proof}

\section{When does the weak ideal property imply the
  ideal property?}\label{Sec_WIPIP}

\indent
The weak ideal property seems to be the property
most closely related to the ideal property
which has good behavior on passing to hereditary subalgebras,
fixed point algebras,
and extensions.
(Example~2.7 of~\cite{PsnPh1}
gives a separable unital C*-algebra~$A$
with the ideal property
and an action of $\Zqt$ on~$A$
such that the fixed point algebra does not have the
ideal property.
Example~2.8 of~\cite{PsnPh1}
gives a separable unital C*-algebra~$A$
such that $M_2 (A)$
has the ideal property
but $A$ does not have the ideal property.
Theorem~5.1 of \cite{Psn1}
gives an extension of separable C*-algebras
with the ideal property
such that the extension does not have the ideal property.)
On the other hand,
the ideal property came first,
and in some ways seems more natural.
Accordingly,
it seems interesting to find conditions
under which the weak ideal property implies the ideal property.
Our main result in this direction is Theorem~\ref{T_5Z26_ClassForEq}.
It covers, in particular,
separable locally AH~algebras
in the sense of Definition \ref{D_5Z26_AH}(\ref{D_5Z26_AH_LocAH}) below.
We also prove (Proposition~\ref{P_6201_WIPForStable})
that the weak ideal property implies the ideal property
for stable C*-algebras with Hausdorff primitive ideal space.
We give an example to show that this implication can
fail for $Z$-stable C*-algebras.

In the introduction,
we illustrated the importance of the ideal property
with several theorems in which it is a hypothesis.
We start by showing that two of these results
can otherwise fail:
Theorem~4.1 of~\cite{Ps2} (stable rank one
for AH~algebras with slow dimension growth)
in Example~\ref{E_6401_tsr},
and Theorem~3.6 of~\cite{GJLP}
(AT~structure when in addition the K-theory is torsion free)
in Example~\ref{E_6401_AT}.
In both cases,
Theorem~\ref{T_5Z26_ClassForEq} below
implies that one can replace the the ideal property
with the weak ideal property.

\begin{exa}\label{E_6401_tsr}
Let $D$ be the $2^{\infty}$~UHF algebra.
Then $C ([0, 1]^2, \, D)$ is an AH~algebra,
even in the somewhat restrictive sense of Definition~\ref{D_5Z26_AH}
of~\cite{Ps2},
which has no dimension growth.
It follows from Proposition~5.3 of~\cite{NOP}
that $C ([0, 1]^2, \, D)$ does not have stable rank one.
Thus, Theorem~4.1 of~\cite{Ps2} fails without the ideal property.
\end{exa}

\begin{exa}\label{E_6401_AT}
Let $D$ be the $3^{\infty}$~UHF algebra,
and let $X = [0, 1]^5$.
Then $C (X, D)$ is an AH~algebra
with no dimension growth.
We show that $C (X, D)$ has torsion free K-theory
and is not an AT~algebra.
Thus, Theorem~3.6 of~\cite{GJLP} fails without the ideal property.

We have $K_0 ( C (X, D) ) \cong \Z \big[ \tfrac{1}{3} \big]$
and $K_1 ( C (X, D) ) = 0$.
Thus $K_* ( C (X, D) )$ is torsion free.
Since the real projective space $\R P^2$
is a compact $2$-dimensional manifold,
there is a closed subspace $Y \subset X$
such that $Y \cong \R P^2$.
By Proposition 2.7.7 of~\cite{At},
$K^0 (\R P^{2}) \cong \Z \oplus \Z / 2 \Z$.
Therefore
\[
K_0 ( C (Y, D) )
 \cong \Z \big[ \tfrac{1}{3} \big]
     \otimes \big( \Z \oplus \Z / 2 \Z \big)
 \cong \Z \big[ \tfrac{1}{3} \big] \oplus \Z / 2 \Z.
\]
Since this group has torsion,
$C (Y, D)$ is not an AT~algebra.
Since $C (Y, D)$ is a quotient of $C (X, D)$,
it follows that $C (X, D)$ is not an AT~algebra.
\end{exa}

It is convenient to work with the following class of C*-algebras.

\begin{ntn}\label{N_5Z24_ClassP}
We denote by $\CP$ the class of all separable
C*-algebras for which topological dimension zero,
the ideal property,
and the weak ideal property
are all equivalent.
\end{ntn}

That is,
a separable C*-algebra~$A$ is in $\CP$
exactly when either $A$ has all of the properties
topological dimension zero, the ideal property,
and the weak ideal property,
or none of them.

The class $\CP$ is not particularly interesting in its own right.
(For example, all cones over nonzero C*-algebras are in~$\CP$,
because they have none of the three properties.)
However, proving results about it will make possible a result
to the effect that these properties are all equivalent for
the smallest class of separable C*-algebras
which contains the separable AH~algebras (as well as some others)
and is closed under
certain operations.

The following lemma isolates,
for convenient reference,
what we actually need to prove to show that a separable C*-algebra
is in~$\CP$.

\begin{lem}\label{L_5Z24_ImpP}
Let $A$ be a separable C*-algebra
for which topological dimension zero implies the ideal property.
Then $A \in \CP$.
\end{lem}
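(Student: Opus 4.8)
The plan is to close a cycle of implications among the three properties in question, relying on two implications that hold for \emph{every} C*-algebra together with the hypothesis imposed on~$A$. By Notation~\ref{N_5Z24_ClassP}, membership in $\CP$ means exactly that topological dimension zero, the ideal property, and the weak ideal property are mutually equivalent for~$A$, so it is enough to produce a cyclic chain of implications linking all three.

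First I would record the two unconditional implications. The ideal property implies the weak ideal property; this is the easy direction, already treated as routine in the proof of \Thm{T_5Y27_spiTDz} and in \Lem{L_5Z22_SPI}. Concretely, if $A$ has the ideal property then so does every quotient $A / I$, and a nonzero ideal of an algebra with the ideal property must contain a nonzero projection; since every ideal of $K \otimes A$ has the form $K \otimes L$ for an ideal $L \subset A$, a nonzero subquotient of $K \otimes A$ is isomorphic to $K \otimes (L_2 / L_1)$ for ideals $L_1 \subsetneqq L_2$ of~$A$, and $L_2 / L_1$ is a nonzero ideal of the ideal-property algebra $A / L_1$, hence contains a nonzero projection. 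The weak ideal property in turn implies topological dimension zero by \Thm{T_5Y28_wipImpTDz}, proved there in complete generality. Thus, with no hypothesis on~$A$,
\[
\text{ideal property}
 \;\Longrightarrow\; \text{weak ideal property}
 \;\Longrightarrow\; \text{topological dimension zero.}
\]

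Next I would splice in the hypothesis of the lemma, that for this particular~$A$ topological dimension zero implies the ideal property, obtaining the cycle
\[
\text{top.\ dim.\ zero}
 \;\Longrightarrow\; \text{ideal property}
 \;\Longrightarrow\; \text{weak ideal property}
 \;\Longrightarrow\; \text{top.\ dim.\ zero.}
\]
Hence all three properties are equivalent for~$A$, which is precisely the assertion $A \in \CP$.

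I do not expect any real obstacle: the lemma is a bookkeeping device whose only purpose is to isolate the single implication (topological dimension zero $\Rightarrow$ ideal property) that has to be checked separately in each later application. The one place to be slightly careful is to invoke ``ideal property $\Rightarrow$ weak ideal property'' at the level of ideals of $K \otimes A$, not merely of~$A$; but, as indicated above, this follows immediately from the description of the ideals of $K \otimes A$ together with the passage of the ideal property to quotients.
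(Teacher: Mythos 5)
Your proposal is correct and follows exactly the paper's argument: close the cycle by combining the two unconditional implications (ideal property $\Rightarrow$ weak ideal property, which the paper cites from Proposition~8.2 of~\cite{PsnPh2}, and weak ideal property $\Rightarrow$ topological dimension zero from Theorem~\ref{T_5Y28_wipImpTDz}) with the hypothesized implication. The extra sketch you give of ``ideal property $\Rightarrow$ weak ideal property'' is sound but not needed, since the paper simply cites it.
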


\begin{proof}
The ideal property implies the weak ideal property
by Proposition 8.2 of~\cite{PsnPh2}.
The weak ideal property implies topological dimension zero
by Theorem~\ref{T_5Y28_wipImpTDz}.
\end{proof}

We prove two closure property for the class~$\CP$.
What can be done here is limited by the failure of
other closure properties for the class of C*-algebras
with the ideal property.
See the introduction to this section.
(It is hopeless to try to prove results
for $\CP$ for quotients,
since the cone over every C*-algebra is in~$\CP$).

\begin{lem}\label{L_5Z26_PDS}
Let $(A_{\ld})_{\ld \in \Ld}$ be a
countable family of C*-algebras in~$\CP$.
Then $\bigoplus_{\ld \in \Ld} A_{\ld} \in \CP$.
\end{lem}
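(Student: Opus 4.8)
The plan is to reduce the statement to the single implication isolated in Lemma~\ref{L_5Z24_ImpP}. Since a countable direct sum of separable C*-algebras is again separable, $A := \bigoplus_{\ld \in \Ld} A_{\ld}$ is separable, so by Lemma~\ref{L_5Z24_ImpP} it suffices to prove that \emph{if $A$ has topological dimension zero, then $A$ has the ideal property}. First I would observe that each summand $A_{\ld}$ is an ideal, hence a \hsa{}, of~$A$. By Lemma 3.3 of~\cite{PsnPh1}, topological dimension zero passes to hereditary subalgebras, so each $A_{\ld}$ has topological dimension zero. Since $A_{\ld} \in \CP$ by hypothesis, each $A_{\ld}$ therefore has the ideal property.

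It then remains to establish the (elementary) permanence fact that a countable direct sum of C*-algebras with the ideal property again has the ideal property. I would take an arbitrary ideal $I \subset A$ and use the standard decomposition of ideals of a direct sum, $I = \ov{\bigoplus_{\ld \in \Ld} I_{\ld}}$, where $I_{\ld} = I \cap A_{\ld}$ is an ideal of~$A_{\ld}$. By the ideal property of $A_{\ld}$, each $I_{\ld}$ is generated as an ideal of $A_{\ld}$ by its projections, all of which lie in~$I$. Because $A_{\ld}$ is a direct summand, the complementary summands annihilate $A_{\ld}$, so the ideal of $A$ generated by these projections is contained in $A_{\ld}$ and coincides with~$I_{\ld}$. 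Consequently the ideal of $A$ generated by the projections of~$I$ contains every $I_{\ld}$, hence contains the closed sum $\ov{\bigoplus_{\ld \in \Ld} I_{\ld}} = I$; since it is also contained in~$I$, it equals~$I$. Thus $A$ has the ideal property, and Lemma~\ref{L_5Z24_ImpP} yields $A \in \CP$.

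There is no serious obstacle here: the argument hinges only on two standard facts, namely that topological dimension zero passes to ideals (supplied by Lemma 3.3 of~\cite{PsnPh1}) and that ideals of a $c_0$-direct sum decompose coordinatewise. The role of the hypothesis $A_{\ld} \in \CP$ is purely to convert topological dimension zero of the summands into the ideal property for them; all the remaining bookkeeping is routine, the only point requiring mild care being that the ideal generated in $A$ by projections lying in $A_{\ld}$ stays inside~$A_{\ld}$.
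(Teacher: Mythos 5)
Your proof is correct and follows essentially the same route as the paper: reduce via Lemma~\ref{L_5Z24_ImpP}, transfer topological dimension zero to the summands, invoke the hypothesis $A_{\ld} \in \CP$ to get the ideal property for each summand, and conclude by the permanence of the ideal property under direct sums (which the paper simply declares clear and you spell out). The only cosmetic difference is that you pass topological dimension zero to the summands by viewing them as hereditary subalgebras (Lemma 3.3 of~\cite{PsnPh1}), whereas the paper views them as quotients (Proposition~2.6 of~\cite{BP09} and Lemma~3.6 of~\cite{PsnPh1}); both are legitimate and both permanence facts are used elsewhere in the paper.
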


\begin{proof}
Set $A = \bigoplus_{\ld \in \Ld} A_{\ld}$.
Then $A$ is separable,
since $\Ld$ is countable
and $A_{\ld}$ is separable
for all $\ld \in \Ld$.
By Lemma~\ref{L_5Z24_ImpP},
we need to show that if $A$ has topological dimension zero
then $A$ has the ideal property.
For $\ld \in \Ld$,
the algebra $A_{\ld}$ is a quotient of~$A$,
so has topological dimension zero by Proposition~2.6
of~\cite{BP09} and Lemma~3.6 of~\cite{PsnPh1}.
Therefore $A_{\ld}$ has the ideal property by hypothesis.

It is clear that arbitrary direct sums
of C*-algebras with the ideal property
also have the ideal property,
so it follows that $A$ has the ideal property.
\end{proof}

\begin{lem}\label{L_5Z24_PTP}
Let $A$ and $B$ be C*-algebras in~$\CP$.
Assume that $A$ is exact.
Then $A \otimes_{\mathrm{min}} B \in \CP$.
\end{lem}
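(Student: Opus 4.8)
The plan is to reduce everything to the single implication isolated in \Lem{L_5Z24_ImpP}. Since $A$ and $B$ lie in $\CP$, they are separable, hence so is $A \otimes_{\mathrm{min}} B$; thus by \Lem{L_5Z24_ImpP} it suffices to show that if $A \otimes_{\mathrm{min}} B$ has topological dimension zero, then it has the ideal property. This is the only thing that needs to be proved, since the other two implications defining membership in $\CP$ are supplied automatically by the proof of \Lem{L_5Z24_ImpP}.

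So I would begin by assuming that $A \otimes_{\mathrm{min}} B$ has topological dimension zero, and then push this property down to each factor. Because $A$ is exact and both $A$ and $B$ are separable, the hypotheses of \Thm{T_5Z21_TPtdz} are met, and that theorem gives the equivalence of topological dimension zero for the tensor product with topological dimension zero for both factors. Hence $A$ and $B$ each have topological dimension zero. Now I would invoke the defining property of the class $\CP$: since $A \in \CP$ and $A$ has topological dimension zero, the three properties coincide for $A$, so $A$ has the ideal property; the same reasoning applied to $B \in \CP$ shows that $B$ has the ideal property.

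At this point both factors have the ideal property and $A$ is exact, so \Thm{T_5Z26_TPIP} applies directly and yields that $A \otimes_{\mathrm{min}} B$ has the ideal property. This completes the reduction demanded by \Lem{L_5Z24_ImpP} and hence shows $A \otimes_{\mathrm{min}} B \in \CP$. I do not expect any genuine obstacle here: the content has all been absorbed into the cited results, and the argument is essentially a two-step assembly (descend topological dimension zero to the factors via \Thm{T_5Z21_TPtdz}, then re-ascend the ideal property via \Thm{T_5Z26_TPIP}). The only points requiring care are bookkeeping ones, namely checking that the separability and exactness hypotheses of those two theorems are satisfied, both of which are immediate from the assumption that $A, B \in \CP$ with $A$ exact.
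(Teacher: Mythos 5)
Your proposal is correct and follows the paper's proof essentially verbatim: reduce via \Lem{L_5Z24_ImpP}, descend topological dimension zero to the factors by \Thm{T_5Z21_TPtdz}, use membership in $\CP$ to get the ideal property for each factor, and conclude with \Thm{T_5Z26_TPIP} (the paper cites its source, Corollary~1.3 of~\cite{PR0}, directly). No gaps.
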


\begin{proof}
The algebra $A \otimes_{\mathrm{min}} B$ is separable
because $A$ and $B$ are.
By Lemma~\ref{L_5Z24_ImpP},
we need to show that
if $A \otimes_{\mathrm{min}} B$ has topological dimension zero
then $A \otimes_{\mathrm{min}} B$ has the ideal property.
Now $A$ and $B$ have topological dimension zero
by Theorem~\ref{T_5Z21_TPtdz},
so have the ideal property by hypothesis.
It now follows from Corollary 1.3 of~\cite{PR0}
that $A \otimes_{\mathrm{min}} B$ has the ideal property.
\end{proof}

We now identify a basic collection of C*-algebras
in~$\CP$.
The main point of the first class we consider is that it
contains the separable AH~algebras (as described below),
but in fact it is much larger.

Since there are conflicting definitions of AH~algebras in the
literature,
we include a definition.
Our version is more restrictive than some other versions
in the literature,
because we insist on spaces with only finitely many connected
components and direct systems with injective maps.
We don't need to use direct sums in the definition
because we don't assume that the projections involved
have constant rank.

\begin{dfn}\label{D_5Z26_AH}
Let $A$ be a C*-algebra.
\begin{enumerate}
\item\label{D_5Z26_AH_AH}
We say that $A$ is an {\emph{AH~algebra}}
if $A$ is a direct limit of a sequence $(A_n)_{n \in \Nz}$
of C*-algebras of the form $p C (X, M_k) p$
for a \chs~$X$
with finitely many connected components,
$k \in \N$,
and a \pj{} $p \in C (X, M_k)$,
all depending on~$n$,
and in which the maps $A_n \to A_{n + 1}$ are all injective.
\item\label{D_5Z26_AH_LocAH}
We say that $A$ is a {\emph{locally AH~algebra}}
if for every finite set $F \subset A$ and every $\ep > 0$,
there exist a \chs~$X$ with finitely many connected components,
$k \in \N$, a \pj{} $p \in C (X, M_k)$,
and an injective \hm{} $\ph \colon p C (X, M_k) p \to A$
such that for all $a \in F$ there is $b \in p C (X, M_k) p$
with $\| \ph (b) - a \| < \ep$.
\end{enumerate}
\end{dfn}

In particular,
AH~algebras are locally AH~algebras.

\begin{dfn}\label{D_5Z26_LS}
Let $A$ be a C*-algebra.
\begin{enumerate}
\item\label{D_5Z26_LS_Std}
We say that $A$ is {\emph{standard}}
(Definition~2.7 of~\cite{CP})
if $A$ is unital and whenever $B$ is a simple \uca{}
and
$J \subset A \otimes_{\mathrm{min}} B$
is an ideal which is generated as an ideal by its \pj{s},
then there is an ideal $I \subset A$
which is generated as an ideal by its \pj{s}
and such that $J = I \otimes_{\mathrm{min}} B$.
\item\label{D_5Z26_LS_LS}
We say that $A$ is an {\emph{LS~algebra}}
(Definition~2.13 of~\cite{CP})
if for every finite set $F \subset A$ and every $\ep > 0$,
there exist a standard C*-algebra~$D$
and an injective \hm{} $\ph \colon D \to A$
such that for all $a \in F$ there is $b \in D$
with $\| \ph (b) - a \| < \ep$.
\end{enumerate}
\end{dfn}

\begin{lem}\label{L_5Z26_AHIsLS}
Let $A$ be a C*-algebra.
\begin{enumerate}
\item\label{L_5Z26_AHIsLS_Homog}
If $A \cong p C (X, M_k) p$
for a \chs~$X$
with only finitely many connected components,
$k \in \N$,
and a \pj{} $p \in C (X, M_k)$,
then $A$ is standard.
\item\label{L_5Z26_AHIsLS_AH}
If $A$ is a locally AH~algebra,
then $A$ is an LS~algebra.
\end{enumerate}
\end{lem}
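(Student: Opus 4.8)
The plan is to deduce part~(\ref{L_5Z26_AHIsLS_AH}) immediately from part~(\ref{L_5Z26_AHIsLS_Homog}). Indeed, if $A$ is a locally AH~algebra, then Definition~\ref{D_5Z26_AH}(\ref{D_5Z26_AH_LocAH}) provides, for every finite set $F \subset A$ and every $\ep > 0$, a \chs~$X$ with finitely many connected components, a $k \in \N$, a \pj{} $p \in C (X, M_k)$, and an injective \hm{} $\ph \colon p C (X, M_k) p \to A$ approximating~$F$ to within~$\ep$. By part~(\ref{L_5Z26_AHIsLS_Homog}) the domain $p C (X, M_k) p$ is standard, and this is precisely the condition defining an LS~algebra in Definition~\ref{D_5Z26_LS}(\ref{D_5Z26_LS_LS}). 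So the whole content is part~(\ref{L_5Z26_AHIsLS_Homog}), which I now address.

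Set $A = p C (X, M_k) p$; it is unital with unit~$p$. The function $x \mapsto \rank (p (x))$ is locally constant, so $\{ x : \rank (p (x)) \geq 1 \}$ is clopen; discarding the complementary clopen set (on which every element of~$A$ vanishes) I may assume $\rank (p (x)) \geq 1$ for all~$x$, whence $\Prim (A) \cong X$ and $A$ is a $C (X)$-algebra with simple fibers $A_x \cong M_{\rank (p (x))}$. Now fix a simple \uca~$B$. Since $A$ is type~I, hence nuclear, $A \otimes_{\mathrm{min}} B = B \otimes_{\mathrm{max}} A$, so \Lem{L_5Z20_TPCX} shows that $A \otimes_{\mathrm{min}} B$ is a $C (X)$-algebra with fibers $A_x \otimes B \cong M_{\rank (p (x))} (B)$, again all simple. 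For a $C (X)$-algebra over a compact~$X$ whose fibers are all simple, the ideals are exactly the $I (U)$ for open $U \subset X$ (in the notation of Notation~\ref{N_6104_Prim}), with $U \mapsto I (U)$ a bijection onto the ideal lattice; and the $C (X)$-structure gives $I_A (U) \otimes_{\mathrm{min}} B = I_{A \otimes_{\mathrm{min}} B} (U)$ for every open~$U$.

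The heart of the matter, and the step I expect to be the main obstacle, is to identify the ideals generated by projections in terms of~$X$. Any \pj~$E$ in $A \otimes_{\mathrm{min}} B$ (or in~$A$) has support $S = \{ x : E (x) \neq 0 \}$ that is clopen: the set $\{ x : E (x) = 0 \}$ is closed and $\{ x : E (x) \neq 0 \} = \{ x : \| E (x) \| = 1 \}$ is open, both by norm-continuity of $x \mapsto E (x)$. Since the fibers are simple, the ideal generated by~$E$ has full fiber on~$S$ and zero fiber off~$S$, so it is $I (S)$. The crucial observation is that the clopen sets occurring as supports of projections are the \emph{same} for~$A$ and for $A \otimes_{\mathrm{min}} B$, although $B$ may contain no nontrivial \pj{s}: a clopen set~$S$ supports a \pj{} in either algebra \ifo{} $\rank (p (x)) \geq 1$ for all $x \in S$, and in that case the functions $p \ch_S$ and $(p \otimes 1_B) \ch_S$ (equal to $p (x)$, respectively $p (x) \otimes 1_B$, on~$S$ and~$0$ off~$S$, which lie in the respective algebras because $S$ is clopen) are projections with support exactly~$S$.

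With this the standardness of~$A$ follows. Let $J \subset A \otimes_{\mathrm{min}} B$ be an ideal generated by its projections, and write $J = I_{A \otimes_{\mathrm{min}} B} (U)$. Because the ideal generated by a \pj~$E \in J$ is $I_{A \otimes_{\mathrm{min}} B} (\supp (E))$, being generated by its projections forces $U = \bigcup_E \supp (E)$, a union of clopen sets on each of which $\rank (p) \geq 1$. By the previous paragraph each such support is also the support of a \pj{} of~$A$ lying in $I_A (U)$, so the ideal of~$A$ generated by the projections of $I_A (U)$ is $I_A \big( \bigcup_E \supp (E) \big) = I_A (U)$; that is, $I = I_A (U)$ is generated by its projections. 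Finally $I \otimes_{\mathrm{min}} B = I_{A \otimes_{\mathrm{min}} B} (U) = J$, so $I$ witnesses the defining property of a standard C*-algebra, completing part~(\ref{L_5Z26_AHIsLS_Homog}).
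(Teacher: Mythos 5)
Your deduction of part~(\ref{L_5Z26_AHIsLS_AH}) from part~(\ref{L_5Z26_AHIsLS_Homog}) is exactly what the paper does. For part~(\ref{L_5Z26_AHIsLS_Homog}), however, you and the paper diverge completely: the paper disposes of it in one line, as a special case of Remark~2.9(2) of~\cite{CP}, whereas you reprove it from scratch as a statement about the locally trivial continuous field $p C (X, M_k) p \otimes_{\mathrm{min}} B$ over~$X$, identifying the ideal lattice of the tensor product with the open subsets of~$X$, observing that projections have clopen support generating the ideal $I (S)$ of that support, and noting that every clopen set supporting a projection of $A \otimes_{\mathrm{min}} B$ already supports the projection $p \ch_S$ of~$A$ itself. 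Your argument is essentially sound and is a reasonable substitute for the citation: it makes visible exactly why standardness holds here (the projections of $A \otimes_{\mathrm{min}} B$ detect no more of~$X$ than those of~$A$ do, even when $B$ is projectionless), at the cost of length and of quietly reproving facts about ideals of continuous fields with simple fibers. The citation, by contrast, covers at once the more general standard algebras $p C (X, D) p$ mentioned after the lemma.

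One point deserves care. Your intermediate claim that for a $C (X)$-algebra over compact~$X$ with simple fibers the ideals are exactly the $I (U)$, and likewise the norm-continuity of $x \mapsto \| E (x) \|$ used to make $\supp (E)$ clopen, are \emph{false} for general (merely upper semicontinuous) $C (X)$-algebras: one can build an upper semicontinuous field over $[0,1]$ with all fibers $\C$ having an ideal supported on the non-open set $\{ 0 \}$, generated by a projection. Both claims do hold here because $p C (X, M_k) p \otimes_{\mathrm{min}} B$ is locally trivial, hence a \emph{continuous} $C (X)$-algebra (so $\{ x \colon \| \ev_x (a) \| > 0 \}$ is open, and $J = I (U)$ then follows from the fiberwise membership criterion of Lemma~\ref{L_5Z20_CXAlgHsa}); but you should say so explicitly, since Lemma~\ref{L_5Z20_TPCX} by itself only yields upper semicontinuity.
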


\begin{proof}
Part~(\ref{L_5Z26_AHIsLS_Homog})
is a special case of Remark 2.9(2) of~\cite{CP}.
Part~(\ref{L_5Z26_AHIsLS_AH})
is immediate from part~(\ref{L_5Z26_AHIsLS_Homog}).
\end{proof}

There are many more standard C*-algebras
than in Lemma \ref{L_5Z26_AHIsLS}(\ref{L_5Z26_AHIsLS_Homog}),
and therefore many more LS~algebras
than in Lemma \ref{L_5Z26_AHIsLS}(\ref{L_5Z26_AHIsLS_AH}).
For example,
in Definition \ref{D_5Z26_AH}(\ref{D_5Z26_AH_LocAH})
replace $p C (X, M_k) p$
by a finite direct sum of C*-algebras
of the form $p C (X, D) p$
for connected \chs{s}~$X$,
simple unital C*-algebras~$D$,
and \pj{s} $p \in C (X, D)$.
Such a C*-algebra is standard by Remark 2.9(2) of~\cite{CP},
so a direct limit of a system of such algebras
with injective maps
is an LS~algebra.
(When all the algebras $D$ which occur are exact
and the direct system is countable,
but the maps of the system are not necessarily injective,
such a direct limit is called an
exceptional GAH algebra
in~\cite{Psn4}.
See Definitions 2.9 and~2.7 there.)

\begin{lem}\label{L_5Z26_LSinP}
Let $A$ be a separable LS~algebra
(Definition \ref{D_5Z26_LS}(\ref{D_5Z26_LS_LS})).
Then $A \in \CP$.
\end{lem}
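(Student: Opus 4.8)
The plan is to reduce to a single implication and then transfer the ideal property across a tensor product. By \Lem{L_5Z24_ImpP}, it suffices to show that if $A$ has topological dimension zero then $A$ has the ideal property. So I would assume $A$ has topological dimension zero; since $A$ is separable, the implication from (\ref{T_5Y28_TDz_tdZ}) to~(\ref{T_5Y28_TDz_O2IP}) in Theorem~\ref{T_5Y28_TDz} shows that $\OTT A$ has the ideal property. The goal is then to push the ideal property back down from $\OTT A$ to~$A$, using that $\OT$ is simple and unital and that $A$ is locally built, along injective \hm{s}, from standard C*-algebras.

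First I would isolate the mechanism for a standard C*-algebra~$D$ (Definition~\ref{D_5Z26_LS}(\ref{D_5Z26_LS_Std})). The assignment $I \mapsto \OTT I$ is injective on ideals: fixing a state $\om$ on $\OT$ with $\om (1) = 1$, the right slice map $\id_D \otimes \om \colon \OTT D \to D$ carries $\OTT I$ into~$I$ and sends $a \otimes 1$ to~$a$, so $\OTT I = \OTT I'$ forces $I = I'$. Granting that $\OTT D$ has the ideal property, for any ideal $I \subset D$ the ideal $\OTT I$ of $\OTT D$ is generated by its \pj{s}; standardness applied with $B = \OT$ then supplies an ideal $I' \subset D$ generated by its \pj{s} with $\OTT I = \OTT I'$, and injectivity gives $I = I'$. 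Hence $I$ is generated by its \pj{s}, and so the ideal property descends from $\OTT D$ to~$D$.

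It remains to pass from the standard building blocks of the LS~algebra~$A$ (Definition~\ref{D_5Z26_LS}(\ref{D_5Z26_LS_LS})) to~$A$ itself, and this is the delicate point. The defining injective \hm{s} $\ph \colon D \to A$ need not have hereditary range, so neither topological dimension zero nor the ideal property can be restricted to the blocks, and — crucially — one cannot even assert that $\OTT \ph (D)$ has the ideal property merely because $\OTT A$ does, since the ideal property does not pass to subalgebras. Thus the clean standard-case transfer of the previous paragraph cannot be applied block by block. Instead I would work with an approximate, local formulation of the ideal property for~$A$ and feed in the local approximations by standard algebras, reconstructing \pj{s} that generate a given ideal of~$A$; this local-to-global transfer for LS~algebras is precisely the technical content of~\cite{CP}, which I would invoke to conclude that $A$ has the ideal property, and hence that $A \in \CP$. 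I expect this LS-to-limit step to be the main obstacle, as it is exactly where the failure of the ideal property to pass to subalgebras must be circumvented.
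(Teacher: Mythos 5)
Your proposal is correct and follows essentially the same route as the paper: reduce via Lemma~\ref{L_5Z24_ImpP}, obtain the ideal property for $\OTT A$ from Theorem~\ref{T_5Y28_TDz}, and then descend to~$A$ by invoking the LS-algebra transfer result of~\cite{CP} (the paper cites Lemma~2.11 of~\cite{CP} with $B = \OT$ for exactly this step). Your additional discussion of the standard case and the slice-map argument is a correct gloss on why that cited lemma works, but it is not needed beyond the citation.
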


\begin{proof}
As usual,
we use Lemma~\ref{L_5Z24_ImpP}.
Assume $A$ has topological dimension zero.
By the implication from (\ref{T_5Y28_TDz_tdZ})
to~(\ref{T_5Y28_TDz_O2IP})
in Theorem~\ref{T_5Y28_TDz},
the algebra ${\mathcal{O}}_2 \otimes A$ has the ideal property.
Apply Lemma 2.11 of~\cite{CP} with $B = {\mathcal{O}}_2$
to conclude that $A$ has the ideal property.
\end{proof}

Extending the list of properties in the discussion
of type~I C*-algebras in Remark 2.12 of~\cite{Psn6}
(and using essentially the same proof as there),
we get the following longer list of equivalent conditions
on a separable type~I C*-algebra.

\begin{prp}\label{P_5Z24_Type1}
Let $A$ be a separable type~I C*-algebra.
Then \tfae:
\begin{enumerate}
\item\label{P_5Z24_Type1_tdz}
$A$ has topological dimension zero.
\item\label{P_5Z24_Type1_wip}
$A$ has the weak ideal property.
\item\label{P_5Z24_Type1_IP}
$A$ has the ideal property.
\item\label{P_5Z24_Type1_ProjP}
$A$ has the projection property
(every ideal in~$A$
has an increasing approximate identity consisting of projections;
Definition~1 of~\cite{Psn2}).
\item\label{P_5Z24_Type1_RR0}
$A$ has real rank zero.
\item\label{P_5Z24_Type1_AF}
$A$ is an AF algebra.
\end{enumerate}
\end{prp}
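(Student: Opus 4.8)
The plan is to prove a single cycle of implications through all six conditions,
\[
(\ref{P_5Z24_Type1_AF}) \Rightarrow (\ref{P_5Z24_Type1_RR0})
 \Rightarrow (\ref{P_5Z24_Type1_ProjP}) \Rightarrow (\ref{P_5Z24_Type1_IP})
 \Rightarrow (\ref{P_5Z24_Type1_wip}) \Rightarrow (\ref{P_5Z24_Type1_tdz})
 \Rightarrow (\ref{P_5Z24_Type1_AF}),
\]
in which only the final implication uses the type~I hypothesis; all the others hold for arbitrary separable C*-algebras. The steps involving condition~(\ref{P_5Z24_Type1_wip}) are exactly what is new relative to Remark~2.12 of~\cite{Psn6}: they splice the weak ideal property into the chain already present there, between the ideal property and topological dimension zero.

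I would dispose of the first five implications as general, essentially formal, facts. AF~algebras have real rank zero, giving~(\ref{P_5Z24_Type1_AF})$\Rightarrow$(\ref{P_5Z24_Type1_RR0}). Real rank zero is equivalent to every hereditary subalgebra having an increasing approximate identity of projections, and ideals are hereditary subalgebras, giving~(\ref{P_5Z24_Type1_RR0})$\Rightarrow$(\ref{P_5Z24_Type1_ProjP}). If an ideal $I$ has an approximate identity of projections $(e_\ld)_{\ld \in \Ld}$, then for $a \in I$ the element $e_\ld a$ lies in the ideal generated by the projections of~$I$ and converges to~$a$, so $I$ is generated by its projections; this is~(\ref{P_5Z24_Type1_ProjP})$\Rightarrow$(\ref{P_5Z24_Type1_IP}). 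The ideal property implies the weak ideal property by Proposition~8.2 of~\cite{PsnPh2}, giving~(\ref{P_5Z24_Type1_IP})$\Rightarrow$(\ref{P_5Z24_Type1_wip}), and the weak ideal property implies topological dimension zero by Theorem~\ref{T_5Y28_wipImpTDz}, giving~(\ref{P_5Z24_Type1_wip})$\Rightarrow$(\ref{P_5Z24_Type1_tdz}).

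The substance, and the main obstacle, is the remaining implication~(\ref{P_5Z24_Type1_tdz})$\Rightarrow$(\ref{P_5Z24_Type1_AF}): a separable type~I C*-algebra whose primitive ideal space has topological dimension zero is AF. Here I would follow the argument of Remark~2.12 of~\cite{Psn6}, exploiting type~I structure theory. The idea is that $A$ admits a composition series whose subquotients have continuous trace, and that topological dimension zero of $\Prim (A)$ passes to every subquotient (using the permanence of topological dimension zero under locally closed subsets of $\Prim (A)$, as in the lemmas of~\cite{PsnPh1} invoked elsewhere in this paper). A continuous-trace algebra with totally disconnected spectrum has vanishing Dixmier--Douady obstruction and is AF, and one then propagates this up the composition series; the delicate point is that topological dimension zero forces the index and exponential maps in the associated six-term sequences to behave so that no K-theoretic obstruction to real rank zero (equivalently, to being AF) survives. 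This is precisely where the type~I hypothesis is essential and cannot be dropped. Once~(\ref{P_5Z24_Type1_tdz})$\Rightarrow$(\ref{P_5Z24_Type1_AF}) is established, the cycle closes and all six conditions are equivalent.
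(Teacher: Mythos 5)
Your cycle is the same one the paper uses, and your treatment of the five ``downward'' implications
(\ref{P_5Z24_Type1_AF})$\Rightarrow$(\ref{P_5Z24_Type1_RR0})$\Rightarrow$(\ref{P_5Z24_Type1_ProjP})$\Rightarrow$(\ref{P_5Z24_Type1_IP})$\Rightarrow$(\ref{P_5Z24_Type1_wip})$\Rightarrow$(\ref{P_5Z24_Type1_tdz})
is correct and matches what the paper dismisses as clear (the last two by Proposition~8.2 of~\cite{PsnPh2} and Theorem~\ref{T_5Y28_wipImpTDz}, exactly as you say). The issue is the one implication that carries all the content, (\ref{P_5Z24_Type1_tdz})$\Rightarrow$(\ref{P_5Z24_Type1_AF}). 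The paper disposes of it with two citations: Lemma~3.6 of~\cite{PsnPh1} shows that topological dimension zero gives $\Prim (A)$ a base of compact open sets, and the theorem in Section~7 of~\cite{BE} then says precisely that a separable type~I C*-algebra whose primitive ideal space has this property is AF.

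You instead sketch a composition-series argument, and at its crucial step you write that topological dimension zero ``forces the index and exponential maps in the associated six term sequences to behave so that no K-theoretic obstruction \ldots{} survives.'' You give no argument for this, and it is exactly the point at which a proof is required; it is not at all clear how topological dimension zero of $\Prim (A)$ would control the connecting maps of an arbitrary subquotient extension. This is a genuine gap. It also misidentifies the mechanism: no K-theoretic condition is needed at the extension steps, because by L.~G.~Brown's theorem on extensions of AF~algebras (the projection lifting problem), an extension of an AF~algebra by an AF~algebra is again AF. With that input your outline can be completed: the continuous-trace subquotients have Hausdorff spectrum, which is totally disconnected by permanence of topological dimension zero under locally closed subsets of $\Prim (A)$, so the Dixmier--Douady class vanishes and each such subquotient is stably isomorphic to a commutative AF~algebra, hence AF as a hereditary subalgebra of an AF~algebra; successor steps of the (generally transfinite) composition series are handled by Brown's theorem, and limit steps by closure of separable AF~algebras under direct limits. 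As written, however, the key step is asserted rather than proved, whereas the paper's citation of~\cite{BE} settles it outright.
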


\begin{proof}
It is clear that every condition on the list implies the previous one.
So we need only show that
(\ref{P_5Z24_Type1_tdz}) implies~(\ref{P_5Z24_Type1_AF}).
Use Lemma~3.6 of~\cite{PsnPh1} to see that
$\Prim (A)$ has a base for its topology consisting of compact
open sets.
Then the theorem in Section~7 of~\cite{BE}
implies that $A$ is~AF.
\end{proof}

\begin{thm}\label{T_5Z26_ClassForEq}
Let ${\mathcal{W}}$ the smallest class of separable C*-algebras
which contains
the separable LS~algebras
(including the separable locally AH~algebras),
the separable type~I C*-algebras,
and the separable purely infinite C*-algebras,
and is closed under finite and countable direct sums
and under minimal tensor products when one tensor factor is exact.
Then for any C*-algebra in ${\mathcal{W}}$,
topological dimension zero, the weak ideal property,
and the ideal property
are all equivalent.
\end{thm}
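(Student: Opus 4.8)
The plan is to recognize that the statement is precisely the assertion that ${\mathcal{W}} \subseteq \CP$, where $\CP$ is the class of separable C*-algebras introduced in Notation~\ref{N_5Z24_ClassP} for which topological dimension zero, the weak ideal property, and the ideal property are all equivalent. Since ${\mathcal{W}}$ is defined to be the \emph{smallest} class of separable C*-algebras containing certain generating families and closed under certain operations, it suffices to verify that $\CP$ contains all the generators and is closed under all the operations used to define~${\mathcal{W}}$.

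First I would check that the three generating families lie in $\CP$. For separable LS~algebras (and hence for separable locally AH~algebras, by \Lem{L_5Z26_AHIsLS}(\ref{L_5Z26_AHIsLS_AH})), this is \Lem{L_5Z26_LSinP}. For separable type~I C*-algebras, the equivalence of topological dimension zero, the weak ideal property, and the ideal property is contained in the list of equivalent conditions of \Prp{P_5Z24_Type1} (namely conditions (\ref{P_5Z24_Type1_tdz}), (\ref{P_5Z24_Type1_wip}), and~(\ref{P_5Z24_Type1_IP})), so every such algebra is in $\CP$. For separable purely infinite C*-algebras, the same equivalence is \Thm{T_5Y27_spiTDz}.

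Next I would verify the two closure properties. Closure of $\CP$ under finite and countable direct sums is \Lem{L_5Z26_PDS}, and closure under minimal tensor products in which one tensor factor is exact is \Lem{L_5Z24_PTP}. Since $\CP$ thus contains all the generators of ${\mathcal{W}}$ and is closed under both defining operations, minimality of ${\mathcal{W}}$ yields ${\mathcal{W}} \subseteq \CP$, which is exactly the desired conclusion.

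The argument is a formal induction following the inductive construction of~${\mathcal{W}}$, so there is no remaining analytic obstacle; all of the substantive work has already been carried out in the lemmas just cited. The only point requiring any care is bookkeeping: one must confirm that each generating algebra is separable (so that it qualifies for membership in $\CP$, which by definition consists of separable algebras) and that the two closure operations preserve separability. Both are immediate, since a countable direct sum of separable C*-algebras and a minimal tensor product of separable C*-algebras are again separable.
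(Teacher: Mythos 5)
Your proposal is correct and is essentially the paper's own proof: the paper simply cites the same six results (Lemmas~\ref{L_5Z26_PDS}, \ref{L_5Z24_PTP}, \ref{L_5Z26_LSinP}, \ref{L_5Z26_AHIsLS}(\ref{L_5Z26_AHIsLS_AH}), Proposition~\ref{P_5Z24_Type1}, and Theorem~\ref{T_5Y27_spiTDz}) and leaves the reader to assemble them, while you have made the assembly explicit as the observation that $\CP$ contains the generators of ${\mathcal{W}}$ and is closed under its defining operations, so minimality gives ${\mathcal{W}} \subseteq \CP$.
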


\begin{proof}
Combine
Lemma~\ref{L_5Z26_PDS},
Lemma~\ref{L_5Z24_PTP},
Lemma~\ref{L_5Z26_LSinP},
Lemma \ref{L_5Z26_AHIsLS}(\ref{L_5Z26_AHIsLS_AH}),
Proposition~\ref{P_5Z24_Type1},
and
Theorem~\ref{T_5Y27_spiTDz}.
\end{proof}

\begin{prp}\label{P_6201_WIPForStable}
Let $A$ be a C*-algebra such that $\Prim (A)$ is Hausdorff.
If $A$ has the weak ideal property
then $K \otimes A$ has the ideal property.
\end{prp}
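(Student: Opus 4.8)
The plan is to realize $K \otimes A$ as a $C_0(X)$-algebra over the totally disconnected space $X = \Prim(A)$ whose fibers are simple and contain \nzp{s}, and then to establish the ideal property through the reformulation that for every pair of ideals $I \subsetneq J$ there is a \pj{} lying in $J$ but not in~$I$. The required projections will be obtained by lifting from a single fiber.

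First I would reproduce the bundle setup from the opening paragraph of the proof of Proposition~\ref{C_6201_T2TP}. Setting $X = \Prim(A)$, the fact that $\Prim(A)$ is Hausdorff makes $A$ a continuous $C_0(X)$-algebra with fiber $A_P = A/P$ (Theorem~2.3 of~\cite{Fl} and Theorem~3.3 of~\cite{Nls}). Since $K$ is nuclear, Lemma~\ref{L_5Z20_TPCX} (equivalently Corollary~2.8 of~\cite{KW1}) shows that $K \otimes A$ is a $C_0(X)$-algebra with fibers $(K \otimes A)_P = K \otimes (A/P)$. By Theorem~\ref{T_5Y28_wipImpTDz} the algebra $A$ has topological dimension zero, so the Hausdorff space $X$ is totally disconnected. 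Each point of $X$ is closed, so $A/P$ is simple, and it has the weak ideal property by Theorem~8.5(5) of~\cite{PsnPh2}; as $A/P \neq 0$, the fiber $K \otimes (A/P)$ contains a \nzp. Because the ideal lattice of any C*-algebra is isomorphic to the lattice of open subsets of its primitive ideal space, and $\Prim(K \otimes A) \cong X$, every ideal of $K \otimes A$ is of the form $I_{K \otimes A}(U)$ for a unique open $U \subseteq X$ (Notation~\ref{N_6104_Prim}), and this coincides with the $C_0(X)$-ideal $\overline{\io(C_0(U))(K \otimes A)}$.

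I would then verify the ideal property via the standard criterion: a C*-algebra has the ideal property \ifo{} whenever $I \subsetneq J$ are ideals there is a \pj{} in $J \setminus I$. (Applying this to $J$ and the ideal it generates by its projections forces the two to coincide.) So let $I \subsetneq J$ be ideals of $K \otimes A$, corresponding to open sets $U_I \subsetneq U_J$, and pick $x \in U_J \setminus U_I$. Using Lemma~\ref{L_5Z20_PropOfFld}(\ref{L_5Z20_PropOfFld_mult}) one checks that $\ev_x(J) = (K \otimes A)_x$ (since $x \in U_J$) while $\ev_x(I) = 0$ (since $x \notin U_I$). The fiber $(K \otimes A)_x = K \otimes (A/P_x)$ contains a \nzp~$p$. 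By Corollary~\ref{C_5Z20_HSAisCXA} the ideal $J$ is itself a $C_0(X)$-algebra whose fiber at $x$ is $\ev_x(J)$ and whose evaluation is the restriction of that of $K \otimes A$, so $p$ lies in this fiber; since $X$ is totally disconnected, Lemma~\ref{L_5Z20_IfDcPj}(\ref{L_5Z20_IfDcPj_Pj}) (which does not require continuity) yields a \pj{} $e \in J$ with $\ev_x(e) = p$. Were $e \in I$, we would get $p = \ev_x(e) \in \ev_x(I) = 0$, a contradiction; hence $e$ is a \pj{} in $J \setminus I$, as needed.

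Since the $C_0(X)$-structure is taken over verbatim from Proposition~\ref{C_6201_T2TP}, the only new ingredient is the lifting argument in the last paragraph. The step I expect to require the most care is the identification of the ideals of $K \otimes A$ with the open subsets of $X$ and the compatibility of $I_{K \otimes A}(U)$ with the $C_0(X)$-structure (so that $\ev_x$ annihilates $I$ but is onto $J$); this rests on the simplicity of the fibers, equivalently on $X \cong \Prim(K \otimes A)$. The projection lifting itself is immediate from Lemma~\ref{L_5Z20_IfDcPj}.
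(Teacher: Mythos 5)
Your proposal is correct, and its first half is exactly the paper's: realize $K \otimes A$ as a $C_0 (X)$-algebra over $X = \Prim (A)$ with fibers $K \otimes (A / P)$, observe that $X$ is totally disconnected because the weak ideal property gives topological dimension zero (Theorem~\ref{T_5Y28_wipImpTDz}) and $X$ is Hausdorff, and note that each fiber is simple and contains a \nzp. Where you diverge is the conclusion. The paper finishes in one line: a simple C*-algebra with a \nzp{} trivially has the ideal property, so all fibers have the ideal property, and Theorem~2.1 of~\cite{Psn6} (the ideal-property analogue, for \emph{continuous} $C_0 (X)$-algebras over totally disconnected spaces, of Theorem~\ref{T_5Z20_PrInCXAlg}) then gives the ideal property for $K \otimes A$. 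You instead verify the ideal property by hand: reformulate it as ``every pair of ideals $I \subsetneqq J$ admits a \pj{} in $J \setminus I$,'' pick a point $x$ of $\Prim (A)$ where $\ev_x$ kills $I$ but not $J$ (simplicity of the fibers forces $\ev_x (J)$ to be the whole fiber), and lift a \nzp{} of the fiber into the $C_0 (X)$-algebra $J$ via Corollary~\ref{C_5Z20_HSAisCXA} and Lemma \ref{L_5Z20_IfDcPj}(\ref{L_5Z20_IfDcPj_Pj}). This is in effect a proof of the ideal-property case of Theorem~\ref{T_5Z20_PrInCXAlg}, which the paper does not state. What your route buys is self-containedness (no appeal to~\cite{Psn6}) and the observation that continuity of the field is not actually needed for this direction, since Lemma~\ref{L_5Z20_IfDcPj} does not require it; what the paper's route buys is brevity. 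Your flagged concern about identifying ideals of $K \otimes A$ with open subsets of $X$ is the right thing to worry about, but as you note it reduces to $\Prim (K \otimes A) \cong \Prim (A)$ together with simplicity of the fibers, and in fact all you really use is that $\ev_x (I) = 0$ and $\ev_x (J) \neq 0$ for a suitable~$x$, which is elementary.
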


In particular,
the weak ideal property implies the ideal property
for stable C*-algebras with Hausdorff primitive ideal space.

\begin{proof}[Proof of Proposition~\ref{P_6201_WIPForStable}]
Arguing as in the proof of Proposition~\ref{C_6201_T2TP},
we see that
$K \otimes A$ is a $C_0 (\Prim (A))$-algebra,
with fibers $(K \otimes A)_P \cong K \otimes (A / P)$
for $P \in \Prim (A)$.
Moreover, $\Prim (A)$ is totally disconnected,
and for every $P \in X$,
the quotient $A / P$ is simple
and has the weak ideal property.

For $P \in \Prim (A)$,
it follows that $K \otimes (A / P)$
is simple and has a \nzp,
so has the ideal property.
This is true for all $P \in \Prim (A)$,
so $K \otimes A$ has the ideal property
by Theorem~2.1 of~\cite{Psn6}.
\end{proof}

Let $Z$ be the Jiang-Su algebra.
It is unfortunately not true that the weak ideal property
implies the ideal property for $Z$-stable C*-algebras.

\begin{exa}\label{E_5Y28_ZStabNoIP}
We give a separable C*-algebra $A$ such that
$A$ and $Z \otimes A$ have the weak ideal property
but such that neither $A$ nor $Z \otimes A$
has the ideal property.

Let $D$ be a Bunce-Deddens algebra,
and let the extension
\[
0 \longrightarrow K \otimes D
  \longrightarrow A
  \longrightarrow \C
  \longrightarrow 0
\]
be as in the proof of Theorem~5.1 of~\cite{Psn1}.
(The extension is as in the first paragraph of that proof,
using the choices suggested in the second paragraph.)
In particular,
$A$ does not have the ideal property,
and
the connecting \hm{}
$\exp \colon K_{0} (\C) \to K_{1} (K \otimes D)$ is injective.
Since $K \otimes D$ and $\C$ have the weak ideal property
(for trivial reasons),
it follows from Theorem 8.5(5) of~\cite{PsnPh2}
that $A$ has the weak ideal property.
Clearly $Z \otimes K \otimes D$ and $Z \otimes \C$
have the ideal property.
However, it is shown in the proof of Theorem~2.9 of~\cite{Psn9}
that $Z \otimes A$
does not have the ideal property.
\end{exa}

\begin{qst}\label{Q_5Z26_ASH}
Let $A$ be a separable C*-algebra
which is a direct limit of recursive subhomogeneous C*-algebras.
If $A$ has the weak ideal property,
does $A$ have the ideal property?
\end{qst}

We suspect that the answer is no,
but we don't have a counterexample.

\end{document}